\documentclass{article}
\usepackage[dvipdfmx]{graphicx}

\usepackage[fleqn]{amsmath}
\usepackage{amsthm}
\usepackage{amsmath}
\usepackage{amssymb}
\usepackage{amsbsy}
\usepackage{amsfonts}
\usepackage{mathrsfs}
\usepackage[all]{xy}
\usepackage{amstext}
\usepackage{amscd}
\usepackage[dvips]{epsfig}
\usepackage{psfrag}
\usepackage{enumerate}
\usepackage{flafter}

\usepackage{mathtools}
\usepackage{esint}
\allowdisplaybreaks

\textwidth168mm
\textheight229mm
\topmargin-1.033cm
\setlength{\oddsidemargin}{-4mm}
\setlength{\evensidemargin}{-4mm}
\setlength{\unitlength}{1pt}

\theoremstyle{plain}
\newtheorem{thm}{Theorem}[section]
\newtheorem{prop}[thm]{Proposition}
\newtheorem{cor}[thm]{Corollary}
\newtheorem{lem}[thm]{Lemma}
\theoremstyle{definition}
\newtheorem{exa}[thm]{Example}

\newtheorem{defn}[thm]{Definition}
\newtheorem{prob}[thm]{Problem}

\def\Ker{\mathop{\mathrm{Ker}}\nolimits}

\def\Hom{\mathop{\mathrm{Hom}}\nolimits}

\newcommand{\lra}{\longrightarrow}
\newcommand{\ra}{\rightarrow}
\newcommand{\Q}{{\Bbb Q}}

\newcommand{\Z}{{\Bbb Z}}
\newcommand{\N}{{\Bbb N}}

\newcommand{\K}{{\mathbb{K}}}

\newcommand{\aug}{{\mathrm{aug}}}

\newcommand{\sh}{\mathcal{S}}

\begin{document}

\large
\begin{center}
{\bf\Large Fox pairings of Poincar\'{e} duality groups}
\end{center}
\vskip 1.5pc
\begin{center}{Takefumi Nosaka\footnote{
E-mail address: {\tt nosaka@math.titech.ac.jp}
}}\end{center}
\vskip 1pc
\begin{abstract}\baselineskip=12pt \noindent
This paper develops the study of Fox pairings of a group $G$ from the viewpoint of group cohomology. We compute some cohomology groups of Fox pairings of $G$, where $G$ admits a Poincar\'{e} duality group pair. We also suggest fundamental Fox pairings and higher Fox pairings.
\end{abstract}
\begin{center}
\normalsize
\baselineskip=17pt
{\bf Keywords} \\
\ Fox pairing, group cohomology, Poincar\'{e} duality, derivations 
\end{center}

\tableofcontents

\large
\baselineskip=16pt
\section{Introduction}
\label{IntroS}
Let $\K[G] $ be the group ring of a group $G$ over a commutative ring $\K$. Let $M$ be a $\K[G]$-bimodule and $\aug: \K[G] \ra \K$ be the augmentation map. A {\it Fox pairing (of $G$)} \cite{MT,MT2,Tur,Tur2} is defined to be a $\K$-bilinear map $\eta: \K[G] \times \K[G] \ra M$ satisfying
\begin{equation}\label{t1} \eta(a_1 a_2,b)= \eta(a_1 ,b) \mathrm{aug}(a_2)+ a_1 \eta( a_2,b), \ \ \mathrm{for \ any \ } a_1,a_2,b \in \K[G], \end{equation}
\begin{equation}\label{t2}\eta(a,b_1b_2)= \eta(a ,b_1) b_2 + \mathrm{aug}(b_1) \eta( a,b_2), \ \ \mathrm{for \ any \ } a,b_1,b_2 \in \K[G] .\end{equation}
See \cite[Sections 6--8]{MT2}, Example \ref{deriex}, and Appendix \ref{SS2} for examples. There have been studies of Fox pairings and their applications in the case that $G$ is a surface group and $M=\K[G]$; for example, they give a generalization of (logarithms of) Dehn twists (see \cite{MT}) and suggest a 3-dimensional description of the Goldman-Turaev Lie bialgebra (see \cite{Mas}). See also \cite{Tur2} for Fox pairings from knots in the 3-sphere. 

This paper develops the study of Fox pairings from the viewpoint of the group cohomology of Poincar\'{e} duality pairs. We fix subgroups $S_1, \dots, S_m \subset G$ and denote the union $\cup_{k=1}^m S_k $ by $\sh$. Roughly speaking, if the pair ($G,\sh$) satisfies a Poincar\'{e}-Lefschetz duality over $\K$, it is called a $\K$-Poincar\'{e} duality pair (see \S \ref{OOO} for the definition). In Section \ref{SS1}, we review group cohomology and show (Proposition \ref{Keypro}) that the set of Fox pairings is almost in a 1-1 correspondence with the set of the double cocycles, i.e., $Z^1( G,\sh; Z^1(G ;M)) $. Accordingly, in subsequent sections, we study the cohomology $H^1( G,\sh; H^1(G ;M)) $.

In Section \ref{OOO}, we first show (Proposition \ref{Keypro699}) that every Fox pairing of an $n$-dimensional Poincar\'{e} duality pair is null-cohomologous if $n>2$. Thus, we concentrate on 2-dimensional Poincar\'{e} duality pairs and compute the cohomology. The following is a typical example.
\begin{thm}[
{see Theorem \ref{Thm6}}]\label{main2} If the pair ($G,\sh$) is a 2-dimensional Poincar\'{e} duality pair over $\K$ and $\sh \neq \emptyset $, then the cohomology $H^1( G,\sh; H^1(G ;\K[G])) $ is isomorphic to $\K$.
\end{thm}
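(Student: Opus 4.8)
The plan is to compute $H^1(G,\sh;H^1(G;\K[G]))$ by applying Poincar\'{e}--Lefschetz duality twice, each time combined with the long exact sequence of the pair $(G,\sh)$. Write $n=2$ and set $N:=H^1(G;\K[G])$, regarded as a $\K[G]$-module via the residual action coming from the bimodule structure of $\K[G]$ (the side not used to form $H^1(G;-)$); since duality for a $\K$-Poincar\'{e} pair is an isomorphism of functors $H^k(G,\sh;-)\cong H_{n-k}(G;-)$ and $H^k(G;-)\cong H_{n-k}(G,\sh;-)$ on arbitrary coefficient modules (\S\ref{OOO}), all the identifications below will be $\K[G]$-linear, so we may carry the module structures along without comment.

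First I would identify the coefficient module $N$. Duality gives $N=H^1(G;\K[G])\cong H_1(G,\sh;\K[G])$. Feed this into the homology long exact sequence of $(G,\sh)$ with coefficients in $\K[G]$. Because $\K[G]$ is free (hence flat) over $\K[S_k]$ and over $\K[G]$ we have $H_j(\sh;\K[G])=\bigoplus_k H_j(S_k;\K[G])=0$ and $H_j(G;\K[G])=0$ for $j>0$, while $H_0(\sh;\K[G])=\bigoplus_k\K[S_k\backslash G]=:P$ and $H_0(G;\K[G])=\K$. Hence the long exact sequence collapses to a short exact sequence of $\K[G]$-modules
\[ 0\lra N\lra P\xrightarrow{\ \oplus_k\aug\ }\K\lra 0, \]
the map being the augmentation on each summand; in particular $H_0(G,\sh;\K[G])=0$ because $\sh\neq\emptyset$, and $N\cong\ker(\oplus_k\aug)$.

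Next I would compute $H_1(G;N)$, which by duality is $H^1(G,\sh;N)$, the group we want. Apply $H_*(G;-)$ to the short exact sequence above. By Shapiro's lemma $H_j(G;\K[S_k\backslash G])\cong H_j(S_k;\K)$, and the augmentation $P\to\K$ induces on $H_1$ the sum of the inclusion-induced maps $\bigoplus_k H_1(S_k;\K)\to H_1(G;\K)$ --- that is, precisely the map $H_1(\sh;\K)\to H_1(G;\K)$ appearing in the long exact sequence of $(G,\sh)$ with trivial coefficients $\K$. Since $(G,\sh)$ is a $2$-dimensional Poincar\'{e} pair with $\sh\neq\emptyset$, duality and connectedness give $H_2(G;\K)\cong H^0(G,\sh;\K)=\ker(\K\to\K^{m})=0$; therefore the connecting map out of $H_2(G;\K)$ vanishes, and the long exact sequence yields $H_1(G;N)\cong\ker\bigl(H_1(\sh;\K)\to H_1(G;\K)\bigr)$. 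Finally, in the homology long exact sequence of $(G,\sh)$ with $\K$-coefficients this kernel equals the image of $\partial\colon H_2(G,\sh;\K)\to H_1(\sh;\K)$; here $\partial$ is injective since $H_2(G;\K)=0$, and $H_2(G,\sh;\K)\cong H^0(G;\K)\cong\K$ by duality. Chaining the isomorphisms gives $H^1(G,\sh;H^1(G;\K[G]))\cong H_1(G;N)\cong\K$.

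I expect the main difficulty to be bookkeeping rather than conceptual: one must verify that the successive identifications ($N$ as a kernel from the $\K[G]$-coefficient sequence, the Shapiro isomorphisms, and the cap-product duality maps) are compatible --- above all that the $H_1$-map induced by $P\to\K$ is genuinely the inclusion-induced map $H_1(\sh;\K)\to H_1(G;\K)$ of the pair --- and that orientation is handled (if non-orientable $\K$-Poincar\'{e} pairs are admitted one replaces $\K$ by the orientation module where it occurs, but connectedness of $G$ and $\sh\neq\emptyset$ still force the relevant invariant and coinvariant modules to be $\K$). It is worth recording, in the spirit of Proposition \ref{Keypro}, that under the Fox-pairing/double-cocycle dictionary a generator of this $\K$ is the class of the fundamental Fox pairing, so the isomorphism can be made canonical by cap product with the fundamental class $[G,\sh]$.
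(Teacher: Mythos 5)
Your proposal is correct and follows essentially the same route as the paper's proof of Theorem \ref{Thm6}: identify $H^1(G;\K[G])$ with $\Ker(\aug:\oplus_k\K[G/S_k]\to\K)$ via duality, Shapiro, and the long exact sequence of the pair, then apply $H_*(G;-)$ to the resulting short exact sequence and use $H_2(G;\K)\cong H^0(G,\sh;\K)=0$ and $H_2(G,\sh;\K)\cong H^0(G;\K)\cong\K$ to land on $\K$. The only cosmetic difference is that you re-derive the coefficient identification inside the argument, whereas the paper quotes it from Proposition \ref{Keypro699}(3).
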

\noindent
We call a generator of the cohomology {\it the fundamental Fox pairing of $(G,\sh)$} (Definition \ref{def6th}). Furthermore, we examine the uniqueness of the fundamental Fox pairing from a certain condition; see Theorem \ref{dere35} and Corollary \ref{dere3335}. As examples, if $G$ is a surface group, then the fundamental Fox pairing is equal to the Fox pairing, as defined by Turaev \cite{Tur} (see Proposition \ref{de4r3e35}). 

We also explore Fox pairings of other groups. We first observe the fundamental Fox pairings of some orbifolds of dimension two (see Proposition \ref{d43e35}). Next, in Section \ref{S997} examines the conditions of constructing Fox pairings from 3-manifold groups and shows that some homotopy groups are obstacles to the construction; see Proposition \ref{ksoiw}. Furthermore, Section \ref{kkkk} examines the commutator subgroups of knot groups and presents a $\Q$-duality with a comparison to the work of Turaev \cite{Tur,Tur2}. In addition, as a generalization of Fox pairings, Section \ref{Se996} introduces higher Fox pairings of the $\K$-Poincar\'{e} duality pair of higher cohomological dimension, where we will show (Theorem \ref{Thm6ht}) that the set of higher Fox pairings is closely related to the relative homology $H_2(G,\sh ;\K)$. For example, if $G$ is a knot group in the 3-sphere, we point out a higher Fox pairing in the form of a Gysin map that is obtained from a Seifert surface; see Example \ref{ex339}. Furthermore, we also discuss the existence of higher Fox pairings for aspherical closed $n$-manifolds and the case $\sh = \emptyset$; see Proposition \ref{ex77733} and Example \ref{ex7733}.

To conclude, the above computations of the cohomology $H^n( G,\sh; H^n(G ;\K[G])) $ predict non-trivial (higher) Fox pairings, although it remains a problem for the future to describe these non-trivial Fox pairings concretely and to give their applications.

\

\noindent {\bf Conventional notation.} We always write $G$ for a group and $\K$ for a commutative ring. By $\K[G] $, we mean the group ring of $G$. Furthermore, we fix a finite family of subgroups $S_1, \dots, S_m \subset G$ such that $S_i \cap S_j=\{1\} $ for any $i \neq j$ (possibly, every $S_i $ is empty). We denote the union $\cup_{k=1}^m S_m $ by $\mathcal{S} $.

\section{Fox pairings from the viewpoint of group cohomology}
\label{SS1}
The purpose of this section is to suggest an approach to Fox pairings in terms of group (co)-homology.

We will begin by reviewing the (relative) group (co)-homology. Denote the group ring of a group $G$ over $\K$ by $\K[G]$. For $n \geq 1$, let $ F_n(G)$ be the free $\K[G]$-module with basis $\{ [g_1 | \cdots| g_n] ;g_i \in G\} $. Given a left $\K[G]$-module $A$, we define $C_n(G;A )$ to be $ A \otimes_{\K[G]} F_n(G) $ and the differential operator $\partial_n^l$ by
\[\!\!\!\!\!\! ( g_1^{-1} a) \otimes [ g_2| \cdots |g_{n}] + \!\!\!\!\! \sum_{i: \ 1 \leq i \leq n-1}\!\!\!\!\!\! (-1)^i a \otimes [ g_1| \cdots |g_{i-1}| g_{i} g_{i+1}| g_{i+2}|\cdots | g_n] +(-1)^{n} a \otimes [ g_1| \cdots | g_{n-1}] .\]
Then, we define the group homology $H_n(G; A)$ from this complex. %is defined to be the homology. 
The relative homology $H_n(G, \sh ;A)$ is defined to be the homology of the quotient complex $ C_n(G;A )/ \sum_{k=1}^m C_n(S_k;A )$. Dually, given a right $\K[G]$-module $B$, we define $C_n^r(G;B)$ to be $ F_n(G) \otimes_{\K [G]} B $ and another differential operator $\partial_n^r $ by
\[\!\!\!\!\!\! [ g_2| \cdots |g_{n}]\otimes b + \!\!\!\!\! \sum_{i: \ 1 \leq i \leq n-1}\!\!\!\!\!\! (-1)^i [ g_1| \cdots |g_{i} g_{i+1}| \cdots | g_n] \otimes b+(-1)^{n} [ g_1| \cdots | g_{n-1}] \otimes b g^{-1} .\]
We can define the homology groups $ H_n(G; B) $ and $ H_n(G,\sh; B)$ in the same fashion.

Dually, let $ C^n_l(G;A)$ be the module $\Hom (F_n(G) ,A)$ consisting of left $\K[G]$-module homomorphisms $f$. Furthermore, we define $ C^n_l(G,\sh ,A)$ to be the submodule consisting of such $f$'s satisfying $f(l_{i,1},\dots, l_{i,n} )=0$ for any $i \leq m$ and any $l_{i,1},\dots, l_{i,n} \in S_i$. For $f \in C^n(G;A)$, define $\delta^n_l(f)\in C^{n +1}(G;A)$ to be $f\circ \partial_{n+1}^l $. We denote by $ Z^n_l(G,\sh ;A)$ the submodule of $C^n_l(G,\sh ;A) $ consisting of $n$-cocycles. Dually, starting from a right $\K[G]$-module $B$, we can define the relative complex $ C^n_r(G,\sh ; B)$ in a parallel way to $C^n_l(G,\sh ; B) $.
\begin{exa}\label{ex1}
Consider the case $n=1$. A {\it left }(resp. a {\it right) derivation} $\partial: \K[G] \ra A$ is a $\K$-homomorphism satisfying $\partial (ab)= \partial (a) \mathrm{aug}(b)+ a \partial (b)$ (resp. $\partial (ab)= \partial (a) b+ \mathrm{aug}(a) \partial (b))$. Then, by the definition of $\delta^1_l$, $Z^1_l(G,A)$ is identified with the set of left derivations.
\end{exa}
\noindent
In addition, we can define the cap-product $\cap$; see, e.g., \cite{BE} for the definition. 

Next, we give some examples of Fox pairings:
\begin{exa}\label{deriex}
Let $M$ be a $\K[G]$-bimodule. If there are a $\K[G]$-bimodule homomorphism $ \nu: M \otimes_{\K[G] } M \ra M$ and a left derivation $D_l$ and a right one $D_r$ over $\K[G]$, the map $\K[G] \times \K[G] \ra M $ which takes $ a \otimes b$ to $ \nu( D_l(a) , D_r(b))$ is a Fox pairing. We denote the pairing by $\eta_{D_l\otimes D_r}$. For example, when $G$ is a free group $ \pi$ and $M=\K[ \pi]$, any Fox pairing is a sum of such Fox pairings arising from derivations (see \cite[Section 2.5]{MT}).

For $ c \in M$, the map that sends $ (a,b)$ to $ (\aug (a)-a) c (\aug(b)-b)$ is a Fox pairing, where $a,b \in \K[G]$. Such a Fox pairing is said to be {\it inner}. Furthermore, given a Fox pairing $\eta$ with $M= \K[G]$, the mapping $(g,h) \mapsto g \overline{\eta (h^{-1}, g^{-1})} h$ is also a Fox pairing, where the overline means the involution of $\K[G]$ defined by $\bar{g}=g^{-1}.$ We write $\eta^t$ for the mapping and call it {\it the transpose of $\eta$}. 
\end{exa}

Now we will give a characterization of every Fox pairing in terms of group cocycles (Proposition \ref{Keypro} below). Let $M$ be a $\K[G]$-bimodule and $\sh ,\sh' $ be collections of subgroups. Let us denote the set of Fox pairings by $\mathcal{FP}(G,M) $, which is canonically turned into a $\K $-module. Define a submodule of $\mathcal{FP}(G;M) $ by setting
$$ \{ \eta : \K[G] \otimes \K[G] \ra M : \ \mathrm{Fox \ }\mathrm{pairing} \ | \ \ \eta ( l\otimes g) =\eta ( g\otimes l' ) =0 \ \mathrm{for \ any \ }l\in \sh , l ' \in \sh', g\in G \ \}, $$
and denote it by $\mathcal{FP}(G,\sh,\sh';M) $. Take two copies, $G_1, G_2$, of $G$. Regarding $M$ as a right $\K[G_1]$-module, we can define the complex $ C_r^*(G_1;M)$. Since the left action of $G $ on $M$ gives rise to an action of $G$ on $C_r^*(G_1;M)$; thus, we can consider another complex $ C^*_l(G_2 ; Z_r^i(G_1 ; M))$. For a 1-cocycle $f \in Z^1_l(G_2 ; Z^1_r(G_1 ; M))$, we define a map $ \eta_{f}: G_2 \times G_1 \ra M $ by $ \eta_{f}(g,h):= (f(g))(h)$, which bilinearly extends to $ \K[G] \times \K[G] \ra M $ as a Fox pairing.

\begin{prop}\label{Keypro}
Let $M$ be a $\K[G]$-bimodule. The correspondence $f \mapsto \eta_f$ gives rise to a $\K$-module isomorphism,
$$ Z^1_l (G_2,\sh ; Z^1_r(G_1,\sh'; M)) \cong \mathcal{FP}(G,\sh,\sh ';M) . $$
Furthermore, if $\eta_f$ is inner, then $f $ lies in $ B^1_l (G_2,\sh ; B^1_r(G_1,\sh '; M))$.
\end{prop}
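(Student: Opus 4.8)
The plan is to make the correspondence $f \mapsto \eta_f$ explicit and then check, separately, that it is well-defined, $\K$-linear, injective, surjective, and finally that it carries coboundaries to inner pairings. Recall $F_1(G)$ is free on symbols $[g]$, $g \in G$, so an element $f \in Z^1_r(G_1;M)$ is determined by its values $f([h]) \in M$, which we abbreviate $\partial_r(h)$; the cocycle condition $\delta^1_r f = 0$ unwinds, via the formula for $\partial_2^r$, to exactly the right-derivation identity $\partial_r(h_1 h_2) = \partial_r(h_1)\,h_2 + \aug(h_1)\,\partial_r(h_2)$ of Example \ref{ex1}. Likewise a $1$-cochain $f \in C^1_l(G_2; Z^1_r(G_1;M))$ is a function $g \mapsto f(g) \in Z^1_r(G_1;M)$, and since the coefficient module $Z^1_r(G_1;M)$ carries the left $G$-action inherited from $M$, the condition $\delta^1_l f = 0$ unwinds to $f(g_1 g_2) = f(g_1)\,\aug(g_2) + g_1 \cdot f(g_2)$, i.e. the left-derivation identity in the $M$-valued cocycle, now with values in the module of right derivations. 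Setting $\eta_f(g,h) = (f(g))(h)$ and extending bilinearly, these two unwound identities become precisely \eqref{t1} and \eqref{t2}: the first (left) identity gives \eqref{t1} and the second (right derivation in $h$) gives \eqref{t2}. So the bijection between $Z^1_l(G_2; Z^1_r(G_1;M))$ and $\mathcal{FP}(G,M)$ is essentially a double application of Example \ref{ex1}, and $\K$-linearity in $f$ is immediate.

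Next I would handle the relative decorations. The subcomplex defining $C^1_r(G_1,\sh';M)$ consists of those $f$ vanishing on $[l']$ for $l' \in S_i$, all $i$; under $f \leftrightarrow \partial_r$ this says $\partial_r(l') = 0$, equivalently $\eta_f(g,l') = 0$ for all $l' \in \sh'$, $g \in G$ — matching the second defining condition of $\mathcal{FP}(G,\sh,\sh';M)$. Similarly membership of $f$ in $C^1_l(G_2,\sh; Z^1_r(G_1;M))$ forces $f(l) = 0$ in $Z^1_r(G_1;M)$ for $l \in \sh$, i.e. $\eta_f(l,h) = 0$ for all $h$, the first defining condition. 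So the isomorphism restricts correctly to the relative groups. Injectivity of $f \mapsto \eta_f$ is clear because $f$ is recovered from $\eta_f$ by $f(g)([h]) = \eta_f(g,h)$; surjectivity is the converse direction, where given a Fox pairing $\eta$ one defines $f(g)$ to be the function $[h] \mapsto \eta(g,h)$ and checks, using \eqref{t2}, that $f(g) \in Z^1_r(G_1;M)$, and then using \eqref{t1} that $g \mapsto f(g)$ is an $l$-cocycle with these coefficients; the relative vanishing conditions transfer back verbatim.

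For the final clause about inner pairings: an inner Fox pairing has the form $\eta(a,b) = (\aug(a) - a)\,c\,(\aug(b) - b)$ for some $c \in M$. I would simply exhibit the coboundary witnessing this. The principal right derivation attached to $c$ is $\partial_r^{(c)}(h) = c - c h = (1 - h)\,c$ — wait, with the correct sign it is $\partial_r^{(c)}(h) = \aug(h) c - ? $; in any case the standard principal $1$-cocycle $h \mapsto c\cdot h - c$ (or its sign-flip) represents an element of $B^1_r(G_1;M)$, and when $c$ is fixed this is a coboundary in the usual bar-resolution sense, landing in $B^1_r(G_1,\sh';M)$ precisely because $(\aug(l') - l')c\cdots$ — one needs $(\aug(l')-l')c = 0$? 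No: one needs the pairing to vanish on $\sh'$, which for a general inner pairing need not hold, so strictly the statement should be read as: \emph{if} $\eta_f$ is inner (hence in $\mathcal{FP}(G,\sh,\sh';M)$, so the defining vanishing already holds), then the recovered $c$-data gives coboundaries. Concretely, the map $g \mapsto \bigl(h \mapsto (\aug(g)-g)\,c\,(\aug(h)-h)\bigr)$ is the image under $\delta^1_l$ of the constant $0$-cochain valued in the element ``$(\aug(-) )c(\aug(-) - \cdot)$'' of $Z^1_r(G_1;M)$, which is itself $\delta^1_r$ of the $0$-cochain $c \in M$; chasing these two coboundary operators in turn shows $f \in B^1_l(G_2,\sh; B^1_r(G_1,\sh';M))$. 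The main obstacle I anticipate is purely bookkeeping: getting the signs and the left/right module conventions in $\partial_2^l$, $\partial_2^r$ to match the sign conventions built into \eqref{t1}–\eqref{t2} (and into the involution-twisted transpose), and making sure the ``outer'' coefficient module $Z^1_r(G_1;M)$ is given the left $G$-action that actually makes $\delta^1_l$ reproduce \eqref{t1} — a sign error there would swap a derivation for an anti-derivation. Once the conventions are pinned down, every step is a direct unwinding of definitions.
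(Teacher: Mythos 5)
Your proposal is correct and follows essentially the same route as the paper: the paper likewise identifies $C^1_l(G_2;C^1_r(G_1;M))$ with $\mathrm{Map}(G_2\times G_1,M)$, observes that the inner and outer cocycle conditions unwind to the right- and left-derivation identities \eqref{t2} and \eqref{t1}, restricts to the relative subcomplexes for the vanishing conditions on $\sh,\sh'$, and notes that an inner pairing is the double coboundary of the $0$-cochain $c$. Your version is simply a more explicit unwinding (modulo the sign/index bookkeeping you flag), so no substantive difference.
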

\begin{proof}
First, consider the case $\sh= \sh' = \emptyset $. By linearity, we have a bijection,
\begin{equation}\label{t333}\mathcal{FP}(G,M) \longleftrightarrow \{ \eta: G\times G \ra M \ | \ \eta \mathrm{ \ satisfies \ } \eqref{t1} \mathrm{ \ and \ } \eqref{t2} \}. \end{equation}
On the other hand, if we regard $C^n(G ; M)$ as the set $ \{ G^n \ra M\} $, we have canonical bijections,
$$ C_l^1(G_2 ; C_r^1(G_1 ; M)) \longleftrightarrow \mathrm{Map}( G_2 , \mathrm{Map}(G_1, M) ) \longleftrightarrow \mathrm{Map}( G_2 \times G_1, M ) . $$
We can readily see that the restriction on $ Z^1_l(G_2 ; Z^1_r(G_1 ; M))$ is onto the right-hand side of \eqref{t333} and that the composite of the bijections is equal to the required correspondence.

For the cases $ \sh \neq \emptyset$ and $\sh' \neq \emptyset$, we can easily check that the restriction on $\mathcal{FP}(G,\sh,\sh ' ;M) $ of the bijection above is the required isomorphism. 

The final claim can be readily shown by referring to the definitions of inner Fox pairings and the isomorphisms.
\end{proof}

\section{Fox pairings of duality groups of higher dimension }
\label{OOO}
As a consequence of Proposition \ref{Keypro}, it is reasonable to discuss the cohomology $ H^1(G_2,\sh ; H^1(G_1; M))$. In particular, we should focus on the case where $H^1(G_1; M) $ does not vanish. For applications, we will hereafter mainly consider the case $M=\K[G].$

Now let us discuss such (non)-vanishing cases with $M=\K[G]$. As the Shapiro lemma indicates (see, e.g., (6.4) in \cite[Section III. 6]{Bro}), if $G$ is finite, $H^1( G ;\K[G]) $ vanishes. So, we should consider groups of infinite order. As an example, consider duality groups, where a group $L$ is {\it a duality group} of dimension $n$, i.e., there are a $\K[G]$-module $D$ a homology $n$-class $ \mu \in H_n(L;D)$ such that the cap-product with $\mu$ gives an isomorphism $H^{* }(L;M ) \cong H_{n-*}(L;D \otimes_{ \K} M )$ for any coefficient $M$. Moreover, $G$ is a {\it virtual duality group of dimension }$n$, if there is a subgroup $L$ of $G$ of finite index that is a duality group of dimension $n$. Sections VIII. 8--10 in \cite{Bro} give examples.
\begin{prop}\label{Keypro6} %If $G$ is a duality group of dimension $n$ with a free duality module and $n \neq 1,2$, then $ H^1( G ;R[G]) $ vanishes.
If $G$ is a virtually duality group of dimension $n \neq 1$, then $ H^1( G ;\K[G]) $ vanishes as well.

\end{prop}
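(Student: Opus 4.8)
The plan is to reduce the statement to the well-known structure theory of duality groups, using the Shapiro lemma to handle the "virtually" part. First I would recall that if $L \subset G$ is a subgroup of finite index, then $\K[G]$, viewed as a $\K[L]$-module, is isomorphic to a finite direct sum of copies of $\K[L]$ (indexed by the cosets), and more precisely $\K[G] \cong \mathrm{Ind}_L^G \K[L]$ as $\K[G]$-modules. By the Shapiro lemma, $H^*(G;\K[G]) \cong H^*(L;\K[G])$, and since $\K[G]$ restricted to $L$ is a finite free $\K[L]$-module, $H^*(L;\K[G]) \cong H^*(L;\K[L])^{\oplus [G:L]}$. Hence it suffices to show $H^1(L;\K[L]) = 0$ when $L$ is a genuine duality group of dimension $n \neq 1$.

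Next I would invoke the defining property of a duality group: there is a dualizing module $D$ and a fundamental class $\mu \in H_n(L;D)$ such that capping with $\mu$ gives isomorphisms $H^k(L;M) \cong H_{n-k}(L; D \otimes_\K M)$ for all $k$ and all coefficient modules $M$. Taking $M = \K[L]$ and $k = 1$ yields
\[
H^1(L;\K[L]) \cong H_{n-1}(L; D \otimes_\K \K[L]).
\]
The module $D \otimes_\K \K[L]$, with its diagonal $L$-action, is a free (more precisely, "induced"/co-induced in a suitable sense) $\K[L]$-module — this is the standard fact that $A \otimes_\K \K[L] \cong \K[L] \otimes_\K A_0$ where $A_0$ is $A$ with trivial action, so $D \otimes_\K \K[L]$ is a direct sum of copies of $\K[L]$. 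Therefore its homology is concentrated in degree $0$: $H_j(L; D \otimes_\K \K[L]) = 0$ for all $j \geq 1$. Since $n \neq 1$ forces $n - 1 \neq 0$, we get $H^1(L;\K[L]) \cong H_{n-1}(L;D\otimes_\K \K[L]) = 0$, as required. (The case $n = 0$ is vacuous since then $L$, hence $G$, is finite, already excluded by Proposition \ref{Keypro6}'s hypothesis that $n \neq 1$ combined with the preceding discussion — or one simply notes $H_{-1} = 0$ too.)

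The main obstacle I anticipate is bookkeeping rather than conceptual: one must be careful that the coefficient module in the duality isomorphism $H^k(L;M)\cong H_{n-k}(L;D\otimes_\K M)$ is allowed to be the bimodule $\K[L]$ with one action used for the cohomology and the tensor product taken over $\K$ with the other action — i.e., checking that the "freeness of $D \otimes_\K \K[L]$" argument applies to the correct action, and that the Shapiro-lemma step respects the relevant module structures. Once these compatibilities are pinned down, the argument is a direct chain of the two isomorphisms above.
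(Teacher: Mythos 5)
Your argument follows the same route as the paper: reduce to the finite-index duality subgroup $L$ via Shapiro's lemma, apply the duality isomorphism with coefficients $\K[L]$, and observe that $D\otimes_\K\K[L]$ is an induced (free) module, so its homology vanishes in positive degrees; since $n-1\neq 0$, the cohomology $H^1$ vanishes. The only slip is in your first reduction: the isomorphism ``$H^*(G;\K[G])\cong H^*(L;\K[G])$'' is not Shapiro's lemma and is false in general, since restriction to a finite-index subgroup is not an isomorphism on cohomology (e.g.\ for $G=\Z$ and $L=2\Z$ one gets $\Z$ on the left and $\Z^2$ on the right in degree $1$). The correct application, which you have already set up by noting $\K[G]\cong\mathrm{Ind}_L^G\K[L]=\mathrm{Coind}_L^G\K[L]$ for finite index, gives $H^*(G;\K[G])\cong H^*(L;\K[L])$ directly, with a single copy rather than $[G:L]$ copies; this is exactly the paper's citation of (6.4) in Brown, Section III.6. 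Since either version reduces the claim to showing $H^1(L;\K[L])=0$, and your treatment of that step (duality followed by the untwisting $D\otimes_\K\K[L]\cong\K[L]\otimes_\K D_0$ and the vanishing of higher homology of induced modules) is correct, the proof goes through once the reduction is restated properly.
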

\begin{proof} %Notice To show the former, For the proof of latter part,
Notice that $ H^{* }(G;\K[G] ) \cong H^{* }(L;\K[L] )$, by (6.4) in \cite[Section III. 6]{Bro}, and $H_*( L;\K[L]) \cong H_*( \mathrm{pt};\K)$, by the Shapiro lemma; thus, the duality implies $ H^1( G ;\K[G]) \cong H_{n-1}(L;(D \otimes \K)[L] ) \cong H_{n-1 }( \mathrm{pt};D \otimes \K ) =0$.
\end{proof}

As a result of Proposition \ref{Keypro6}, we should focus on duality groups of dimension one or Poincar\'{e} duality groups of low dimension in order to find non-trivial Fox pairings from duality groups.

At this point, we should review $\K$-Poincar\'{e} duality pairs in the sense of \cite[\S 6]{BE}. The pair $(G,\sh)$ is {\it a $\K$-Poincar\'{e} duality pair of dimension $n$} ($\K$-$\mathrm{PD}_n$ pair, for short) if there is a homology class $e \in H_n(G,\sh ; \K)$ with trivial coefficients such that the cap products
$$ e \cap \bullet: H^i(G,\sh;M) \lra H_{n-i}(G;M) , \ \ \ \ \ H^i(G;M) \lra H_{n-i}(G,\sh;M) $$
are isomorphisms for any $\K[G]$-module $M$. For example, for any orientable aspherical compact manifold $X$, if the inclusion $ \partial X \hookrightarrow X$ induces an injection $ \pi_1(\partial X) \hookrightarrow \pi_1(X)$, then the pair $(\pi_1(X) , \pi_1(\partial X)) $ is a $\K$-$\mathrm{PD}_n$ group for any ring $\K$; see \cite[Theorem 6.3]{BE}. Similarly to Proposition \ref{Keypro6}, we compute the first cohomology with $M=\K[G]$ as follows:
\begin{prop}\label{Keypro699}
Let the pair $(G,\sh)$ be a $\K$-$\mathrm{PD}_n$ pair.
\begin{enumerate}
\item If $n \geq 2$, then the relative $H^1( G,\sh ;\K[G]) $ vanishes.
\item If $ n \geq 3$, then the non-relative $H^1( G ;\K[G]) $ vanishes. If $n=2$ and $\sh = \emptyset$, then $H^1( G ;\K[G]) $ also vanishes.
\item If $n=2$ and $\sh \neq \emptyset $, then there is an isomorphism,
\begin{equation}\label{lll554}H^1( G ; \K[G]) \cong \Ker ( \bigoplus_{ k : 1 \leq k \leq m}\mathrm{aug}: \bigoplus_{ k : 1 \leq k \leq m} \K[G/ S_k] \ra \K ). \end{equation}
Here, if $H^1( G ; \K[G]) $ is the cohomology of $Z_r( G ; \K[G])$, which can be regarded as a left $\K[G]$-module, the isomorphism \eqref{lll554} holds in the sense of left $\K[G]$-modules.
\end{enumerate}
\end{prop}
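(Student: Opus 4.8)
The plan is to route all three statements through the two Poincar\'{e}--Lefschetz duality isomorphisms $e\cap\bullet$ attached to the fundamental class $e\in H_n(G,\sh;\K)$, after first computing the absolute and relative homology of $G$ with coefficients in $\K[G]$ by Shapiro's lemma, much as in the proof of Proposition~\ref{Keypro6}. Since $\K[G]$ is free over $\K[G]$ and over $\K[S_k]$ for each $k$, Shapiro's lemma gives $H_i(G;\K[G])\cong H_i(\mathrm{pt};\K)$ and $H_i(S_k;\K[G])\cong\bigoplus_{G/S_k}H_i(\mathrm{pt};\K)$; hence $H_i(G;\K[G])=H_i(S_k;\K[G])=0$ for $i\geq 1$, while $H_0(G;\K[G])\cong\K$ and $H_0(S_k;\K[G])\cong\K[G/S_k]$, all compatibly with the left $\K[G]$-module structure coming from the left factor of the bimodule $\K[G]$.

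Next I would feed this into the long exact homology sequence of the pair $(G,\sh)$,
\[ \cdots \longrightarrow \bigoplus_k H_i(S_k;\K[G]) \longrightarrow H_i(G;\K[G]) \longrightarrow H_i(G,\sh;\K[G]) \longrightarrow \bigoplus_k H_{i-1}(S_k;\K[G]) \longrightarrow \cdots , \]
with $H_0(G,\sh;\K[G])=\Coker\bigl(\bigoplus_k H_0(S_k;\K[G])\to H_0(G;\K[G])\bigr)$. The vanishing above immediately gives $H_i(G,\sh;\K[G])=0$ for all $i\geq 2$, together with a four-term exact sequence
\[ 0 \longrightarrow H_1(G,\sh;\K[G]) \longrightarrow \bigoplus_k \K[G/S_k] \xrightarrow{\varphi} \K \longrightarrow H_0(G,\sh;\K[G]) \longrightarrow 0 , \]
in which $\varphi$ is induced on $H_0$ by the inclusions $S_k\hookrightarrow G$. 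On the $k$-th summand this map sends the class of a coset to its image $1\in H_0(G;\K[G])\cong\K$, so $\varphi=\bigoplus_k\mathrm{aug}$; and when $\sh\neq\emptyset$ it is onto, so $H_0(G,\sh;\K[G])=0$ and $H_1(G,\sh;\K[G])\cong\Ker(\bigoplus_k\mathrm{aug})$ as left $\K[G]$-modules.

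The three assertions now follow by pairing this with duality. For (1), $e\cap\bullet$ identifies $H^1(G,\sh;\K[G])$ with $H_{n-1}(G;\K[G])$, which vanishes because $n\geq 2$. For (2), the other duality isomorphism identifies $H^1(G;\K[G])$ with $H_{n-1}(G,\sh;\K[G])$: this is $0$ when $n\geq 3$ by the computation above (as $n-1\geq 2$), and when $n=2$, $\sh=\emptyset$ it is $H_1(G;\K[G])=0$. For (3), the same isomorphism with $n=2$ gives $H^1(G;\K[G])\cong H_1(G,\sh;\K[G])\cong\Ker(\bigoplus_k\mathrm{aug})$; since the left action on $\K[G]$ commutes with the right-module structure used to form $Z_r(G;\K[G])$ and with cap product by the trivial-coefficient class $e$, the composite is left $\K[G]$-linear, which gives the refined claim.

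The two invocations of duality and the diagram chase are routine. The care is needed in: (i) confirming that, for the relative homology as defined in Section~\ref{SS1}, the long exact sequence has this form --- in the bar model the subcomplexes $C_*(S_k;\K[G])$ overlap in degree $0$ (and, by $S_i\cap S_j=\{1\}$, only there in the normalized complex), so one must check that the homology feeding the sequence is $\bigoplus_k H_*(S_k;\K[G])$ in positive degrees and that $H_0(G,\sh;\K[G])=0$ with connecting map exactly $\bigoplus_k\mathrm{aug}$; (ii) pinning down the map $H_0(S_k;\K[G])\to H_0(G;\K[G])$ as the augmentation $\K[G/S_k]\to\K$; and (iii) tracking the left $\K[G]$-module structure through Shapiro's lemma, the long exact sequence and the cap product, and reconciling the left/right conventions ($Z_r$ versus the side on which $e\cap\bullet$ acts), which is what upgrades \eqref{lll554} to an isomorphism of left $\K[G]$-modules. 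I expect (i) and (iii) to be the delicate points, though both are bookkeeping rather than conceptual obstacles.
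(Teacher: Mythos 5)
Your proposal is correct and follows essentially the same route as the paper: Shapiro's lemma to kill the positive-degree homology of $G$ and of each $S_k$ with coefficients in $\K[G]$, the long exact sequence of the pair to extract the four-term sequence identifying $H_1(G,\sh;\K[G])$ with $\Ker(\bigoplus_k\mathrm{aug})$, and the two Poincar\'{e}--Lefschetz duality isomorphisms to translate each cohomology group in the statement into one of these homology groups. The points you flag as delicate (the form of the connecting map and the left $\K[G]$-module structure) are exactly the ones the paper also treats as routine verifications.
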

\begin{proof} %Notice To show the former, For the proof of latter part,
By duality and the Shapiro lemma, $H^1( G,\sh ;\K[G]) \cong H_{n-1} (G;\K[G])\cong H_{n-1} (\mathrm{pt};\K)= 0$ for $n \geq 2$. Next, $H^1( G ;\K[G]) \cong H_{n-1} (G,\sh;\K[G])$ is isomorphic to $H_{n-2} (\sh;\K[G])$ by the homology long exact sequence and $ H_{*} (G,\K[G])=0$ for $* \geq 1$ and $n \geq 3$. By the Shapiro lemma again, the homology $H_{n-2} (\sh;\K[G])$ is zero; hence, we have proven the second claim. A similar discussion holds for the case of $n=2$ and $\sh = \emptyset$.

Finally, to prove \eqref{lll554}, consider the homology long exact sequence
$$0= H_{1}( G ;\K[G]) \ra H_{1}( G, \sh ;\K[G]) \ra H_{0}( \sh ;\K[G])\stackrel{\mathrm{inc}_*}{\lra} H_0 ( G ;\K[G])\ra 0. $$
By the Shapiro lemma again, the third and fourth terms are computed as
$$ H_{0}( \sh ;\K[G]) \cong \bigoplus_{ k : 1 \leq k \leq m} \K[G/ S_k] , \ \ \ H_0 ( G ;\K[G])\cong H_0( \mathrm{pt};\K) \cong \K, $$
and the map $\mathrm{inc}_*$ coincides with the augmentation map. Therefore, by duality again, the cohomology $H^1( G ; \K[G]) \cong H_1 (G,\sh ;\K[G]) $ is isomorphic to the kernel $\Ker (\aug)$, as required. Furthermore, if $M$ is also regarded as a left $\K[G]$-module, it is not hard to check that the above isomorphisms are left $\K[G]$-module homomorphisms. 
\end{proof}
Consequently, we shall focus on the case $n=2$ and the non-relative cohomology $H^1( G ; \K[G]) $.

\section{Fox pairings of Poincar\'{e} duality pairs of dimension two}
\label{OOO3}
In this section, we will focus on Fox pairings of Poincar\'{e} duality pairs of dimension two. The cohomology consisting of Fox pairings is computed as follows (see \S \ref{SS2346} for the proof):
\begin{thm}\label{Thm6}
Let the pair $(G,\sh)$ be a $\K$-$\mathrm{PD}_2$ pair with $\sh \neq \emptyset$. Then, there is an isomorphism,
\begin{equation}\label{l6654} H^1(G_2,\sh ; H^1(G_1; \K[G])) \cong \K.\end{equation}

Meanwhile, concerning the non-relative cohomology, there is an exact sequence,
\begin{equation}\label{lll5454} 0 \ra \K \lra H^1(G_2 ; H^1(G_1; \K[G])) \lra \frac{ \Ker (\aug :\oplus_{k=1}^m \K [G/S_k ] \ra \K) }{\{ a - \zeta \cdot a \}_{\zeta \in \mathcal{S}}} \lra 0 .\end{equation}
\end{thm}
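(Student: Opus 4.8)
The plan is to reduce both statements, via Proposition~\ref{Keypro699}(3), to ordinary group‑cohomology computations for the left $\K[G]$‑module $N:=\Ker(\aug:\bigoplus_{k=1}^m\K[G/S_k]\to\K)$, and then to exploit the cap‑product isomorphisms of the PD$_2$ pair together with the module extension
\[ 0\lra N\lra\bigoplus_{k=1}^m\K[G/S_k]\lra\K\lra 0 \]
and Shapiro's lemma $H_\ast(G;\K[G/S_k])\cong H_\ast(S_k;\K)$. Two structural facts are used throughout: each peripheral subgroup $S_k$ is a $\K$‑$\mathrm{PD}_1$ group (so $\mathrm{cd}_\K S_k=1$ and $H^1(S_k;M)\cong H_0(S_k;M)$), and $\mathrm{cd}_\K G\le 1$ because $\sh\neq\emptyset$ (indeed $H^2(G;M)\cong H_0(G,\sh;M)=\Coker(\bigoplus_k H_0(S_k;M)\to H_0(G;M))=0$). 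By Proposition~\ref{Keypro699}(3) the left side of \eqref{l6654} is $H^1(G_2,\sh;N)$ and the middle term of \eqref{lll5454} is $H^1(G_2;N)$.

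For \eqref{l6654}: the cap product gives $H^1(G_2,\sh;N)\cong H_1(G;N)$, and I would compute $H_1(G;N)$ from the homology long exact sequence of the above extension. Since $H_2(S_k;\K)=0$, Shapiro gives $H_2(G;\bigoplus_k\K[G/S_k])=0$; and $H_2(G;\K)\cong H^0(G,\sh;\K)=\Ker(\K\xrightarrow{\mathrm{diag}}\bigoplus_k\K)=0$, which is exactly where $\sh\neq\emptyset$ is needed. The sequence therefore collapses to $H_1(G;N)\cong\Ker\bigl(\bigoplus_k H_1(S_k;\K)\to H_1(G;\K)\bigr)$, and the relative homology long exact sequence of $(G,\sh)$ (again using $H_2(G;\K)=H_2(\sh;\K)=0$) identifies this kernel with $H_2(G,\sh;\K)$. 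Finally the cap product gives $H_2(G,\sh;\K)\cong H^0(G;\K)=\K$, which proves the isomorphism.

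For the exact sequence \eqref{lll5454}, I would use the cohomology long exact sequence of the pair $(G_2,\sh)$ with coefficients in $N$,
\[ H^0(G_2;N)\xrightarrow{\rho}\bigoplus_k H^0(S_k;N)\xrightarrow{\delta_0}H^1(G_2,\sh;N)\lra H^1(G_2;N)\lra\bigoplus_k H^1(S_k;N)\xrightarrow{\delta_1}H^2(G_2,\sh;N)\lra H^2(G_2;N). \]
The leading copy of $\K$ in \eqref{lll5454} is the image of $H^1(G_2,\sh;N)\cong\K$ from the previous step: one checks $\delta_0=0$, i.e.\ that $\rho$ is onto, and in fact $H^0(G_2;N)=N^G=0$ and $H^0(S_k;N)=N^{S_k}=0$, because the only $S_k$‑invariant vectors of $\bigoplus_j\K[G/S_j]$ are the scalar multiples of the class of the coset $S_k$ in the $k$‑th summand (using malnormality and pairwise non‑conjugacy of the peripheral subgroups), and such a vector has nonzero augmentation. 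Since $\mathrm{cd}_\K G\le 1$ gives $H^2(G_2;N)=0$, the map $\delta_1$ is surjective, so the cokernel of $\K\hookrightarrow H^1(G_2;N)$ is $\Ker\delta_1$. Rewriting $H^1(S_k;N)\cong H_0(S_k;N)=N/I_{\K[S_k]}N$ and $H^2(G_2,\sh;N)\cong H_0(G;N)=N/I_{\K[G]}N$ via the PD isomorphisms, I would compute $\delta_1$ explicitly on the chain level and identify $\Ker\delta_1$ with $N\big/\sum_{\zeta\in\sh}(1-\zeta)N$, which is precisely the displayed quotient $\Ker(\aug)/\{a-\zeta\cdot a\}_{\zeta\in\sh}$.

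The step I expect to be the main obstacle is this final identification of $\delta_1$ and of its kernel. It requires tracking the PD$_2$ cap product with the relative fundamental class $e\in H_2(G_2,\sh;\K)$ on the level of (co)chains, so as to verify that the composite $\bigoplus_k H^1(S_k;N)\to H^2(G_2,\sh;N)\xrightarrow{\sim}H_0(G;N)$ is the map one expects; it also relies on the bookkeeping peculiar to the relative complex — notably that the degree‑$0$ relative cochains coincide with the absolute ones (this is what makes $H^1(G_2,\sh;N)\hookrightarrow H^1(G_2;N)$) and that the $S_k$ meet pairwise only in $\{1\}$. A secondary technical point is to exclude, or treat separately, degenerate low‑complexity pairs (for instance when some $S_k$ has finite index in $G$), where the invariant‑ and coinvariant‑computations above take a different form.
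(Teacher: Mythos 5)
Your argument is correct and follows essentially the same route as the paper: the first part is the paper's own computation of $H_1(G;N)$ via the coefficient extension $0\to N\to\bigoplus_k\K[G/S_k]\to\K\to0$, Shapiro's lemma, and the identification of $\Ker(\aug_*)$ with $H_2(G,\sh;\K)\cong\K$, while the second part uses the long exact sequence of the pair with coefficients in $N$ (you take the cohomological version where the paper takes the homological one, but under the cap-product isomorphisms these are the same sequence, resting on the same vanishings $N^{S_k}=0$ and $H_0(G;N)=0$). The identification of $\Ker\delta_1$ that you flag as the main remaining obstacle is precisely the step the paper also passes over, simply asserting that the relevant term is the coinvariant module of $N$.
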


Theorem \ref{Thm6} implies that, while there are infinitely non-nullcohomologous Fox pairings, there is a unique Fox pairing as a basis of $ H^1(G_2,\sh ; H^1(G_1; \K[G])) \cong \K $ in \eqref{l6654}. Thus, we define
\begin{defn}\label{def6th} The {\it fundamental Fox pairing} is a Fox pairing $G^2 \ra \K[G]$, which represents a basis of $ H^1(G_2,\sh ; H^1(G_1; \K[G])) \cong \K $. %, which satisfies the boundary condition \eqref{t5}.
\end{defn}
Furthermore, we will discuss uniqueness of Fox pairings under some conditions.
\begin{thm}
[{See Section \ref{SS2346} for the proof}]\label{dere35} Let the pair $(G,\sh)$ be a $\K$-$\mathrm{PD}_2$ pair. Then, a Fox pairing $\eta: G^2 \ra \K[G]$ and $a_{s} \in \K[G]$ uniquely associated with $s \in \sh $ exist such that
\begin{equation}\label{t5} \eta( s , g)= a_{s}( 1-g) \ \ \mathrm{for \ any \ } g \in G, \end{equation}
and $\eta$ represents a basis of $H^1(G_2,\sh ; H^1(G_1; \K[G])) \cong \K $ in \eqref{l6654}

Furthermore, if $\sh \cong \Z$ and $\K$ is a field, we can choose a Fox pairing $\eta$ satisfying
\begin{equation}\label{t57} \eta( \zeta , g)= 1-g \ \ \mathrm{for \ any \ } g \in G, \end{equation}
where $\zeta$ is a generator of $ S_1 \cong \Z$.
\end{thm}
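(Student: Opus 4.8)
The plan is to run everything through Proposition \ref{Keypro}, which turns a Fox pairing into a double $1$-cocycle $f\in Z^1_l(G_2;Z^1_r(G_1;\K[G]))$, and to exploit the short exact sequence of left $\K[G_2]$-modules
\[ 0\lra B^1_r(G_1;\K[G])\lra Z^1_r(G_1;\K[G])\lra H^1(G_1;\K[G])\lra 0 , \]
whose first term is identified with $\K[G]$ (with its left action) by $c\mapsto (g\mapsto c(1-g))$, an isomorphism because $G$ is infinite. For \eqref{t5} I would start from a generator of $H^1(G_2,\sh;H^1(G_1;\K[G]))\cong\K$; as the relative cochain complex is concentrated in degrees $\ge 1$, such a generator is an actual relative $1$-cocycle $\bar f$. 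Lift $\bar f$, using freeness of $F_1(G_2)$, to a (non-relative) cochain $\tilde f$ valued in $Z^1_r(G_1;\K[G])$; then $\delta\tilde f$ is a $2$-cocycle valued in $B^1_r(G_1;\K[G])\cong\K[G]$, with obstruction class in $H^2(G_2;\K[G])\cong H^2(G;\K[G])$. By Poincar\'{e}--Lefschetz duality and the Shapiro lemma this group is $H_0(G,\sh;\K[G])$, which vanishes because the augmentation $\bigoplus_k\K[G/S_k]\ra\K$ is surjective (as used in the proof of Proposition \ref{Keypro699}). Hence $\tilde f$ can be corrected, by a cochain valued in $B^1_r$, to a genuine cocycle $f$ over $\bar f$, and the Fox pairing $\eta=\eta_f$ represents the chosen generator; since $f(s)$ maps to $\bar f(s)=0$ for $s\in\sh$ we get $f(s)\in B^1_r(G_1;\K[G])$, i.e.\ $\eta(s,g)=a_s(1-g)$ for some $a_s\in\K[G]$, and $a_s$ is forced to be unique because $(a_s-a_s')(1-g)=0$ for all $g$ implies $a_s=a_s'$ ($G$ being infinite).

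For the refinement, assume $\sh=S_1=\langle\zeta\rangle\cong\Z$ and $\K$ a field; I want to re-choose $\eta$ so that $a_\zeta=1$. Two moves preserve the property ``$\eta$ represents a basis'': rescaling $\eta$ by $\lambda\in\K^\times$ (which rescales $a_\zeta$ by $\lambda$), and adding a null-cohomologous Fox pairing $\eta_0\leftrightarrow f_0\in Z^1_l(G_2;B^1_r(G_1;\K[G]))$ (which adds $f_0(\zeta)\in B^1_r(G_1;\K[G])\cong\K[G]$ to $a_\zeta$). As $f_0$ varies, $f_0(\zeta)$ runs over the image of $\mathrm{ev}_\zeta\colon Z^1_l(G;\K[G])\ra\K[G]$, and I would identify this image with the augmentation ideal $\Ker(\aug)$: since $\langle\zeta\rangle\cong\Z$ is free, $\mathrm{ev}_\zeta$ is the restriction map to $\langle\zeta\rangle$ followed by the isomorphism $Z^1_l(\langle\zeta\rangle;\K[G])\cong\K[G]$ (evaluation at $\zeta$), and in the exact sequence of the pair $(G,\langle\zeta\rangle)$,
\[ 0=H^1(G,\sh;\K[G])\ra H^1(G;\K[G])\stackrel{\mathrm{res}}{\lra}H^1(\langle\zeta\rangle;\K[G])\ra H^2(G,\sh;\K[G])\cong\K\ra H^2(G;\K[G])=0 , \]
one has $H^1(\langle\zeta\rangle;\K[G])\cong\K[G]/(\zeta-1)\K[G]$, and the surjection onto $\K$ is $G$-equivariant, hence is the augmentation; pulling back identifies $\mathrm{Im}(\mathrm{ev}_\zeta)$ with $\Ker(\aug)$.

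Granting this, it is enough to show $\aug(a_\zeta)\ne 0$. For then, since $\K$ is a field, $\aug(a_\zeta)$ is a unit: rescale $\eta$ so that $\aug(a_\zeta)=1$, write $a_\zeta=1+b$ with $b\in\Ker(\aug)=\mathrm{Im}(\mathrm{ev}_\zeta)$, and subtract the null-cohomologous Fox pairing realizing $b$ to reach $\eta(\zeta,g)=1-g$; equation \eqref{t1} then forces $\eta(\zeta^n,g)=(1+\zeta+\dots+\zeta^{n-1})(1-g)$ together with its negative-exponent analogue, so \eqref{t5} still holds on all of $\sh=\langle\zeta\rangle$. Suppose instead $\aug(a_\zeta)=0$, i.e.\ $a_\zeta\in\Ker(\aug)=\mathrm{Im}(\mathrm{ev}_\zeta)$. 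Then subtracting a null-cohomologous Fox pairing produces a representative $\eta'$ of the generator with $\eta'(\zeta,g)=0$, hence (by the same recursion) with $\eta'$ vanishing on $\sh\times G$; by Proposition \ref{Keypro} such an $\eta'$ is a relative cocycle $f'\in Z^1_l(G_2,\sh;Z^1_r(G_1;\K[G]))$ whose image in $H^1(G_2,\sh;H^1(G_1;\K[G]))$ is the chosen generator --- in other words, the generator lifts to $Z^1_r$-coefficients.

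The hard part is to exclude that, i.e.\ to prove that the fundamental class is not carried on the boundary:
\[ H^1\bigl(G_2,\sh;Z^1_r(G_1;\K[G])\bigr)=0 . \]
Indeed, feeding the short exact sequence above into $H^*(G_2,\sh;-)$ and using $H^1(G_2,\sh;\K[G])=0$ (Proposition \ref{Keypro699}(1)) gives $H^1(G_2,\sh;Z^1_r(G_1;\K[G]))\hookrightarrow H^1(G_2,\sh;H^1(G_1;\K[G]))\cong\K$, so the displayed vanishing is exactly the injectivity of the connecting map $H^1(G_2,\sh;H^1(G_1;\K[G]))\ra H^2(G_2,\sh;\K[G])$, which says the generator does not lift, whence $\aug(a_\zeta)\ne 0$. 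To get the vanishing I would use the two iterated spectral sequences of the double complex $C^p_l(G_2,\sh;C^q_r(G_1;\K[G]))$ that already underlies Theorem \ref{Thm6}: by duality and Shapiro, $H^q(G_1;\K[G])$ is concentrated in $q=1$ while $H^p(G_2,\sh;\K[G])$ is concentrated in $p=2$ (where it equals $H_0(G;\K[G])\cong\K$), so on the truncation to $q\ge 1$ both spectral sequences degenerate, and a degree count shows $H^1(G_2,\sh;Z^1_r(G_1;\K[G]))$ sits in a total degree below the surviving range, hence is $0$; without the truncation the same bookkeeping recovers $H^1(G_2,\sh;H^1(G_1;\K[G]))\cong\K$ of Theorem \ref{Thm6}, so in practice this vanishing is a by-product of that proof. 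The hypothesis that $\K$ is a field is used only to invert $\aug(a_\zeta)$; for $\sh=\emptyset$ the statement is vacuous.
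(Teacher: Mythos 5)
Your argument is correct in substance, and on the two key points it takes a genuinely different route from the paper. For the existence of \eqref{t5}, the paper simply quotes Theorem \ref{Thm688} (every class of \eqref{l6654} is represented by an honest double cocycle) and reads off that $\eta(s,\bullet)$ is a coboundary in $Z^1_r(G_1;\K[G])$; your obstruction-theoretic lifting through $0\to B^1_r\to Z^1_r\to H^1\to 0$, with obstruction in $H^2(G_2;\K[G])\cong H_0(G_2,\sh;\K[G])=0$, is a self-contained re-proof of the special case of Theorem \ref{Thm688} that is needed here, and it is sound. For the normalization $a_\zeta=1$, both proofs reduce to the same two facts: (i) the admissible modifications of $a_\zeta$ form exactly $\Ker(\aug)$ --- your computation of $\mathrm{Im}(\mathrm{ev}_\zeta)$ via the long exact sequence of the pair is precisely the paper's Lemma \ref{kl992}; and (ii) $\aug(a_\zeta)\neq 0$. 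For (ii) the paper argues pointwise: a representative with $\aug(a_\zeta)=0$ can be corrected to one vanishing on $\sh\times G$, whose transpose is then killed by the elementary vanishing $Z^1(G,\sh;\K[G])=B^1(G,\sh;\K[G])=0$ (Lemma \ref{Keylem4434}). You instead prove the cohomological vanishing $H^1(G_2,\sh;Z^1_r(G_1;\K[G]))=0$ by comparing the two spectral sequences of the truncated double complex; the bookkeeping checks out (relative cohomology of $(G_2,\sh)$ commutes with the products $C^q_r(G_1;\K[G])\cong\prod_{G_1^q}\K[G]$ and is concentrated in $p=2$, the inner cohomology of the truncation is concentrated in $q=1$ where it equals $Z^1_r$, and total degree $2<3$). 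Your route is heavier but yields the injectivity of the connecting map $H^1(G_2,\sh;H^1(G_1;\K[G]))\to H^2(G_2,\sh;\K[G])$ as a by-product; the paper's transpose trick is more elementary.

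One omission: you prove only that $a_s$ is determined by $\eta$ and $s$, not that $\eta$ itself is unique. The paper's proof establishes, and Corollary \ref{dere3335} and Proposition \ref{3dere44hm5} explicitly invoke, the uniqueness of the Fox pairing satisfying \eqref{t57}. You should add this; it is one line with tools you already have: if $\eta,\eta'$ both satisfy \eqref{t57}, then $\eta-\eta'$ vanishes on $\sh\times G$ by your recursion, so for each fixed $h$ the left derivation $g\mapsto(\eta-\eta')(g,h)$ vanishes on $\sh$ and is therefore zero, since $Z^1(G,\sh;\K[G])=H^1(G,\sh;\K[G])=0$ by Proposition \ref{Keypro699}(1) together with $C^0(G,\sh;\K[G])=0$.
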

\begin{cor}\label{dere3335}
Let $(G,S)$ be a $\K$-$\mathrm{PD}_2$ pair satisfying $S \cong \Z$, and $\K$ be a field. Let $\eta$ be the Fox pairing satisfying \eqref{t57}. Then, any group isomorphism $f : G \ra G$ satisfying $f(S)=S$ preserves $\eta$. More precisely, $\eta ( f(g), f(h))= f( \eta(g,h))$ for any $g,h \in G$.
\end{cor}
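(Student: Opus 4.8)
The plan is to exploit the rigidity statement of Theorem \ref{dere35}: over a field $\K$, with $S \cong \Z$ generated by $\zeta$, the normalization $\eta(\zeta,g) = 1-g$ \emph{pins down $\eta$ uniquely} among the Fox pairings representing a basis of $H^1(G_2,\sh;H^1(G_1;\K[G]))\cong \K$. More precisely, one first wants to check that the pair (``basis of the one-dimensional cohomology'', ``satisfies \eqref{t57}'') has no remaining freedom: any two Fox pairings representing a fixed basis vector differ by an inner (null-cohomologous) Fox pairing, i.e.\ by $(a,b)\mapsto (\aug(a)-a)c(\aug(b)-b)$ for some $c \in \K[G]$; and the requirement that both satisfy \eqref{t57} forces $(\aug(\zeta)-\zeta)c(\aug(b)-b) = (1-\zeta)c(1-b) = 0$ for all $b$, which over a field with $\zeta \neq 1$ gives $(1-\zeta)c = 0$, whence the inner term vanishes identically. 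So $\eta$ is the unique Fox pairing satisfying \eqref{t57} and representing a basis (up to the scalar, which is itself fixed by \eqref{t57}).

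Next I would transport $\eta$ along $f$. Given a group isomorphism $f\colon G\to G$ with $f(S)=S$, define $\eta^f(g,h) := f^{-1}\bigl(\eta(f(g),f(h))\bigr)$, extended $\K$-bilinearly to $\K[G]\times\K[G]$ using the ring automorphism $\K[f]$ of $\K[G]$ induced by $f$. One checks directly from \eqref{t1}--\eqref{t2} and the fact that $\K[f]$ is a ring automorphism commuting with $\aug$ that $\eta^f$ is again a Fox pairing with $M = \K[G]$. Moreover $f$ induces an automorphism of the pair $(G,\sh)$ and hence of all the (co)homology in sight; since $H^1(G_2,\sh;H^1(G_1;\K[G]))\cong \K$ is one-dimensional, $f$ acts on it by a scalar, and the key point is that $\eta^f$ represents a basis precisely because $\eta$ does (the $f$-action on a one-dimensional $\K$-space is by a unit, and $\eta^f$ being obtained by an invertible transformation of $\eta$ cannot land in the null-cohomologous subspace unless $\eta$ does). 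Thus $\eta^f$ is a Fox pairing representing a basis of the one-dimensional cohomology.

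Then I would verify that $\eta^f$ satisfies the same normalization \eqref{t57}. Since $f(S)=S$ and $S \cong \Z$, the automorphism $f$ sends the generator $\zeta$ to $\zeta^{\pm 1}$; I would first treat $f(\zeta)=\zeta$, in which case $\eta^f(\zeta,g) = f^{-1}(\eta(\zeta,f(g))) = f^{-1}(1 - f(g)) = 1 - g$, so $\eta^f$ satisfies \eqref{t57} on the nose. The case $f(\zeta) = \zeta^{-1}$ needs a small extra argument: here $\eta^f(\zeta,g) = f^{-1}(\eta(\zeta^{-1},f(g)))$, and using \eqref{t1} together with $\eta(\zeta\zeta^{-1},g) = \eta(1,g) = 0$ one gets $\eta(\zeta^{-1},g) = -\zeta^{-1}\eta(\zeta,g) = -\zeta^{-1}(1-g)$, so $\eta^f(\zeta,g) = f^{-1}(-\zeta^{-1}(1-f(g))) = -\zeta(1 - g)$; this differs from \eqref{t57} by the invertible scalar $-1$ together with left multiplication by $\zeta$, and I would absorb this by observing that the Fox pairing $(a,b)\mapsto \zeta\,\eta^f(a,b)$ still represents the \emph{same} cohomology class up to a unit and still lies in the basis, and in fact that the uniqueness argument of the first paragraph, run with the normalization up to such a unit, still isolates a single Fox pairing. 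By the uniqueness established in the first paragraph, $\eta^f = \eta$ (after this normalization bookkeeping), which unwinds to $\eta(f(g),f(h)) = f(\eta(g,h))$ for all $g,h\in G$.

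The main obstacle I expect is the uniqueness step of the first paragraph — specifically, making precise that a Fox pairing representing a \emph{fixed} basis vector of the one-dimensional cohomology and satisfying \eqref{t57} is genuinely unique as a map $G^2 \to \K[G]$, not merely unique up to inner pairings. This requires knowing exactly which inner Fox pairings vanish under the constraint \eqref{t57}, and the computation $(1-\zeta)c(1-b)=0 \Rightarrow (1-\zeta)c = 0$ over a field relies on the group-ring element $1-\zeta$ not being a left zero-divisor against elements of the form $c(1-b)$ in a way that kills $c$; this is where being over a field and $\zeta$ having infinite order is used, and it is the delicate point. A secondary subtlety is the sign/left-multiplication bookkeeping in the case $f(\zeta)=\zeta^{-1}$, which is routine but must be handled carefully so that the conclusion $\eta(f(g),f(h)) = f(\eta(g,h))$ comes out with no stray factor.
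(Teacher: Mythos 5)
Your overall strategy --- transport $\eta$ to $\eta^f(g,h):=f^{-1}(\eta(f(g),f(h)))$, check it is a Fox pairing satisfying \eqref{t57}, and invoke uniqueness --- is exactly the paper's (three-line) proof. But the step you yourself flag as the main obstacle, uniqueness, is where your argument breaks. You assert that two Fox pairings representing the same class in $H^1(G_2,\sh;H^1(G_1;\K[G]))$ differ by an \emph{inner} pairing $(a,b)\mapsto(\aug(a)-a)c(\aug(b)-b)$. That is false: the null-cohomologous elements of $Z^1(G_2;Z^1(G_1;\K[G]))$ form a much larger set than the inner pairings. For instance, take a non-inner derivation $D\in Z^1_l(G;\K[G])$ (these exist by Proposition \ref{Keypro699}(3)) and set $\eta_0(g,h)=D(g)(1-h)$; for each fixed $g$ the map $h\mapsto D(g)(1-h)$ is an inner right derivation, so $\eta_0$ lands in $Z^1(G_2;B^1_r(G_1;\K[G]))$ and is null-cohomologous, yet it is not of the inner form unless $D$ is inner. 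So ``same class $+$ \eqref{t57} $\Rightarrow$ equal'' does not follow from your computation $(1-\zeta)c=0\Rightarrow c=0$. The correct route --- and the one the paper uses via Theorem \ref{dere35} and Lemma \ref{Keylem4434} --- needs no cohomology-class bookkeeping at all: if two Fox pairings agree on $\{\zeta\}\times G$, their difference $\delta$ vanishes on $\sh\times G$ (by axiom \eqref{t1} applied to powers of $\zeta$), hence the transpose $\delta^t$ lies in $Z^1(G;Z^1(G,\sh;\K[G]))$, and $Z^1(G,\sh;\K[G])=0$ because $H^1(G,\sh;\K[G])=0$ (Proposition \ref{Keypro699}) while $B^1(G,\sh;\K[G])=0$ by definition. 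This gives uniqueness among \emph{all} Fox pairings satisfying \eqref{t57}, which is what the corollary's one-line proof actually invokes.

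A second, smaller problem: your treatment of $f(\zeta)=\zeta^{-1}$ does not close. Your computation $\eta^f(\zeta,g)=-\zeta(1-g)$ is right, but this is not a unit scalar multiple of $1-g$, so it cannot be ``absorbed'' into the normalization; worse, $(a,b)\mapsto\zeta\,\eta^f(a,b)$ is in general not a Fox pairing, since left multiplication by a non-central element destroys axiom \eqref{t1}. In fact the paper's own verification that $\eta'$ satisfies \eqref{t57} silently assumes $f(\zeta)=\zeta$, and in the orientation-reversing case the conclusion is genuinely doubtful (for a surface, such an $f$ negates the intersection form, hence sends the fundamental class to its negative). So you correctly spotted a real issue in the statement, but your proposed repair does not resolve it; the honest fix is to restrict to $f$ with $f(\zeta)$ conjugate to $\zeta$ (equivalently, $f$ acting trivially on $H_2(G,\sh;\K)$), at which point the case split disappears and the argument reduces to the uniqueness statement above.
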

\begin{proof} 
We can easily check that the map $ \eta' : G^2 \ra \K[G]$ which sends $(g,h)$ to $f^{-1}(\eta ( f(g), f(h)))$ is a Fox pairing and satisfies \eqref{t57}. Thus, $\eta =\eta'$ by uniqueness. Namely, $f $ preserves $\eta$.
\end{proof}
In addition, we will discuss representatives of the cohomology classes.
\begin{thm}\label{Thm688}
Let $(G,S)$ be a $\K$-$\mathrm{PD}_2$ pair with $\sh \neq \emptyset$. Then, every cohomology class of the cohomology groups in \eqref{l6654} and \eqref{lll5454} is represented by a Fox pairing. Moreover, if $\K$ is a PID, then every cohomology class of these cohomologies is represented by a sum of the Fox pairings like in Example \ref{deriex}.
\end{thm}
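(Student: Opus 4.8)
By Proposition~\ref{Keypro} (the case $\sh=\sh'=\emptyset$) a Fox pairing $\eta$ of $G$ with $M=\K[G]$ is the same datum as a $1$-cocycle $f\in Z^1_l(G_2;Z^1_r(G_1;\K[G]))$, and its class in the cohomology of \eqref{l6654} or \eqref{lll5454} is by definition the class of the induced $1$-cocycle $\bar f\colon G_2\to H^1(G_1;\K[G])$; moreover $\bar f$ vanishes on $\sh$ exactly when $\eta(s,-)$ is an inner right derivation for each $s\in\sh$, i.e.\ when $\eta$ satisfies \eqref{t5}. So the problem is to realize an arbitrary class of $H^1(G_2,\sh;H^1(G_1;\K[G]))$ (resp.\ $H^1(G_2;H^1(G_1;\K[G]))$) by a cocycle of the form $\bar f$ attached to a Fox pairing (resp.\ one satisfying \eqref{t5}). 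The relative case is then immediate from Theorems~\ref{Thm6} and \ref{dere35}: the group \eqref{l6654} is $\cong\K$ and the fundamental Fox pairing $\eta_0$ (which satisfies \eqref{t5}) represents a generator, so every class has the form $\lambda[\eta_0]=[\lambda\eta_0]$ and is represented by the Fox pairing $\lambda\eta_0$.

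For the non-relative cohomology I would run the long exact sequence of $H^*(G_2;-)$ attached to the short exact sequence of left $\K[G_2]$-modules
\[ 0\longrightarrow B^1_r(G_1;\K[G])\longrightarrow Z^1_r(G_1;\K[G])\longrightarrow H^1(G_1;\K[G])\longrightarrow 0. \]
A class of $H^1(G_2;H^1(G_1;\K[G]))$ lifts to $H^1(G_2;Z^1_r(G_1;\K[G]))$, hence to a $1$-cocycle in $Z^1_l(G_2;Z^1_r(G_1;\K[G]))$, i.e.\ to a Fox pairing, exactly when its image under the connecting map in $H^2(G_2;B^1_r(G_1;\K[G]))$ vanishes. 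Since $G$ is infinite, $\K[G]$ has no nonzero right $G$-invariant, so $B^1_r(G_1;\K[G])\cong\K[G]$ as a left $\K[G]$-module, and the obstruction group is $H^2(G;\K[G])$. By the Poincar\'e--Lefschetz duality of $(G,\sh)$ together with the homology long exact sequence of the pair, $H^2(G;\K[G])\cong H_0(G,\sh;\K[G])$, which is $0$ because $H_0(\sh;\K[G])\to H_0(G;\K[G])$ is surjective --- this is the step that uses $\sh\neq\emptyset$. Hence every class of $H^1(G_2;H^1(G_1;\K[G]))$ is represented by a Fox pairing.

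For the PID statement, set $K:=H^1(G_1;\K[G])$. The same duality with arbitrary coefficients gives $H^2(G;M)\cong H_0(G,\sh;M)=0$ for all $M$, i.e.\ $\mathrm{cd}_\K G\le 1$; and since $\K$ is a PID and, by Proposition~\ref{Keypro699}(3), $K$ is a submodule of the $\K$-free module $\bigoplus_k\K[G/S_k]$, the module $K$ is $\K$-free. These two facts force $\mathrm{pd}_{\K[G]}K\le 1$. Choosing a finite presentation $\K[G]^{N'}\to\K[G]^{N}\xrightarrow{\psi}K\to 0$ of the (finitely presented) $\K[G]$-module $K$, the kernel $R$ of $\psi$ is finitely generated projective, so $H^2(G_2;R)$ is a summand of copies of $H^2(G;\K[G])=0$. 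Hence, representing a given class of \eqref{l6654} or \eqref{lll5454} by a $1$-cocycle $\phi\colon G_2\to K$ (vanishing on $\sh$ in the relative case), one can lift $\phi$ along $\psi$ to a crossed homomorphism $(d_1,\dots,d_N)\colon G_2\to\K[G]^N$. Taking $D_i$ the left derivation of $\K[G]$ extending $d_i$ and $R_i$ a right derivation representing $\psi(e_i)\in K=H^1(G_1;\K[G])$, the Fox pairing $\sum_i\eta_{D_i\otimes R_i}$ of the type in Example~\ref{deriex} has induced cocycle $g\mapsto\sum_i d_i(g)\psi(e_i)=\psi(\sum_i d_i(g)e_i)=\phi(g)$, hence represents the given class (in the relative case $\phi$ still vanishes on $\sh$, so the sum genuinely represents a class of \eqref{l6654}). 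Alternatively, one may observe that $G$ is free by the classification of two-dimensional Poincar\'e duality pairs and then directly invoke \cite[Section 2.5]{MT} to split the Fox pairing produced in the first part.

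The main obstacle is the vanishing of the obstruction group $H^2(G;\K[G])$: this is precisely where $\sh\neq\emptyset$ is essential, because for a closed $\mathrm{PD}_2$ group $H^2(G;\K[G])\cong\K\neq 0$ and both arguments above break down. The secondary point needing care --- and the place the PID hypothesis actually enters --- is the control of the syzygy module $R$ in the last paragraph: one must know it is projective (so that $H^2(G_2;R)$ is again killed by $H^2(G;\K[G])=0$), and this rests on $K$ being $\K$-free, hence on $\K$ being a PID.
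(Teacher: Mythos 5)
Your argument for the first claim is essentially the paper's: both reduce the problem to lifting a class of $H^1(G_2;H^1(G_1;\K[G]))$ through the coefficient sequence $0\to B^1_r(G_1;\K[G])\to Z^1_r(G_1;\K[G])\to H^1(G_1;\K[G])\to 0$ and kill the obstruction group $H^2(G_2;B^1_r(G_1;\K[G]))$ by Poincar\'e--Lefschetz duality and $\sh\neq\emptyset$ (the paper identifies it with $H_0(G,\sh;B^1(G;\K[G]))=0$ directly; you first identify $B^1_r(G_1;\K[G])\cong\K[G]$ using $\K[G]^{G}=0$ --- both are fine). For the PID claim your route is genuinely different. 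The paper stays on the level of the product group: it feeds the seven-term exact sequence \eqref{23456} with the multiplication map $\mu:\K[G]\otimes_\K\K[G]\to\K[G]$, shows $\mu_*$ is surjective on $H^2(G_2\times G_1;-)$ via the LHS spectral sequence of \eqref{iiiij} and \eqref{ijjii}, and then uses the K\"unneth theorem (this is where the PID enters) to write every class of $H^2(G_2\times G_1;\K[G]\otimes_\K\K[G])$ as a sum of cross products $D_l\times D_r$, which map to $\eta_{D_l\otimes D_r}$ as in Example \ref{kl1}. You instead resolve the coefficient module $K=H^1(G_1;\K[G])$ by free $\K[G]$-modules: the PID hypothesis enters through $K$ being $\K$-free (a submodule of the $\K$-free module $\oplus_k\K[G/S_k]$ from Proposition \ref{Keypro699}), which together with $\mathrm{cd}_\K G\le 1$ gives $\mathrm{pd}_{\K[G]}K\le 1$, so the syzygy is projective and the lifting obstruction again dies in $H^2(G;\K[G])=0$. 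Your computation that $\sum_i\eta_{D_i\otimes R_i}$ induces exactly the prescribed cocycle $\phi$ is correct, so this is a valid alternative proof; it avoids the K\"unneth and seven-term machinery at the cost of the standard fact $\mathrm{pd}_{\K[G]}M\le\mathrm{cd}_\K G$ for $\K$-projective $M$.

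Two caveats. First, your treatment of the relative case by citing Theorem \ref{dere35} is circular relative to the paper's logic: the paper's proof of Theorem \ref{dere35} invokes Theorem \ref{Thm688} to know the fundamental Fox pairing is realized by an honest cocycle. This is harmless because the injection in \eqref{lll5454} reduces the relative case to the non-relative one you prove independently (this is exactly what the paper does), but you should phrase it that way. Second, your closing alternative --- that $G$ is free so one can quote \cite[Section 2.5]{MT} --- is false over a general $\K$: $\mathrm{cd}_\K G\le 1$ forces freeness only over $\Z$ (Stallings--Swan), and the $\Q$-$\mathrm{PD}_2$ orbifold groups of Section \ref{SS234446} are not free. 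Also, you assert $K$ is finitely presented without proof; this is not actually needed, since projectivity of the syzygy together with $G$ being of type $FP$ over $\K$ already gives $H^2(G_2;R)=0$.
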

The proof is in Appendix \ref{SS2}. Here, we should note that the fundamental Fox pairing is not always represented by a cocycle in the relative $Z^1(G_2,\sh ; Z^1(G_1; M)) $, but is always represented by a cocycle in the non-relative $Z^1(G_2 ; Z^1(G_1; M)) $; see Proposition \ref{de4r3e35} for examples.

\section{Proofs of Theorems \ref{Thm6} and \ref{dere35}}
\label{SS2346}
A hasty reader may skip this section.

\begin{proof}[Proof of Theorem \ref{Thm6}]Let the coefficient $M$ be the kernel in %the right hand side of
\eqref{lll554}, i.e., $M= \Ker(\aug: \oplus_{k=1}^m \K[G/S_k ] \ra \K]) $. Consider the long exact sequence,
\[ H_2( G; \K ) \lra H_1( G; M) \lra \oplus_{k=1}^m H_1( G;\K [G/S_k ]) \xrightarrow{ \ \aug_* \ } H_1( G;\K ) \ra \]
\[ \ \ \ \ \ \ \ \ \ \ \ \ \ \ \ \ \ra H_0( G; M) \lra \oplus_{k=1}^m H_0( G;\K [G/S_k ])\xrightarrow{ \ \aug_* \ } H_0( G;\K ) \lra 0 \ \ \ \ \mathrm{(exact)}.\]
Notice that $H_i( G;\K [G/S_k ]) \cong H_i( S_k ;\K )$, by the Shapiro lemma. %, otherwise it is zero. By duality, $H_2( G; \K ) \cong H^0( G,\sh; \K ) = 0$. 
Thus, the long sequence reduces to the exact sequence,
\begin{equation}\label{lll45i} 0 \ra H_1( G; M) \ra \oplus_{k=1}^m H_1( S_k;\K ) \xrightarrow{ \ \mathrm{aug}_* \ } H_1( G;\K ) \ra H_0( G; M) \ra \K^m \lra\K \ra 0 ,\end{equation}
and the map $\mathrm{aug}_* $ is equal to the induced map $H_1( \sh ;\K ) \ra H_1( G;\K ) $ with trivial coefficients from the inclusion $S_k \hookrightarrow G$. Also notice that, by the homology long exact sequence,
$$ H_2(G;\K)=0 \lra H_2(G, \sh;\K) \lra \oplus_{k=1}^m H_1( S_k;\K ) \lra H_1(G; \K),$$
the kernel of the map $\mathrm{aug}_* $ is isomorphic to $H_2(G,\sh;\K) \cong H^0(G;\K) \cong \K$. In summary, since $H^1(G,\sh;M ) \cong H_1(G;M)$, we can readily obtain \eqref{l6654} from \eqref{lll45i}.

To prove the sequence \eqref{lll5454} in the latter claim, let us write $B$ for $ H^1(G_1;\K[G] )$ for short. Consider the homology long exact sequence,
\begin{equation}\label{ll3l4i} \oplus_{k=1}^m H_1( S_k; B) \lra H_1(G_2; B) \lra H_1(G_2, \sh ;B)\lra \oplus_{k=1}^m H_0( S_k;B ) \lra H_0(G_2;B) . \end{equation}
We will observe each term. Since each $S_i$ is a $\K$-$\mathrm{PD}_1$ group (see \cite[Theorem 4.2]{BE}), the first term $H_1( S_k;B) $ is isomorphic to the invariant part $H^0( S_k;B) =B^{S_k}$, which is zero. The last term can be shown to be zero by checking the (co)-invariant part of $B$. We can invoke the claim above to show that the second term is $\K$, and the fourth term is the coinvariant of $ B= \Ker(\aug: \oplus_{k=1}^m\K[G/ S_k ] \ra \K)$. Since the third term in \eqref{ll3l4i} is $H^1( G_2; H^1(G_1;\K[G])) $ by duality, the exact sequence \eqref{ll3l4i} turns out to be the required one \eqref{lll5454}.
\end{proof}

Next, we will prove Theorem \ref{dere35}. For this, we will need a lemma.
\begin{lem}\label{Keylem4434}
There is no non-trivial derivation $D: G\ra \K[G ]$ such that $ D(\zeta)=0$ for any $\zeta \in \sh $.
\end{lem}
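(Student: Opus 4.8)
The plan is to recognize a derivation $D\colon G\ra\K[G]$ that kills $\sh$ as a relative $1$-cocycle with coefficients in the regular module, and then to read the claim off from the vanishing of $H^1(G,\sh;\K[G])$ established in Proposition~\ref{Keypro699}. As in the theorem being proved, we are in the case $\sh\neq\emptyset$ throughout (for $\sh=\emptyset$ the assertion is false, e.g.\ for a closed surface group).

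Concretely, I would proceed as follows. By Example~\ref{ex1}, viewing $D$ as a function $G\ra\K[G]$, the left-derivation identity is exactly the $1$-cocycle identity, so $D\in Z^1_l(G;\K[G])$ with $\K[G]$ given its left regular module structure. The hypothesis that $D(\zeta)=0$ for every $\zeta\in\sh$ says precisely that $D$ vanishes on each $S_k$, hence $D$ lies in the relative cochain module $C^1_l(G,\sh;\K[G])$; being a cocycle, $D\in Z^1_l(G,\sh;\K[G])$. Next, since $\sh\neq\emptyset$, the relative complex carries no $1$-coboundaries: the restriction map $C^0_l(G;\K[G])\ra\bigoplus_k C^0_l(S_k;\K[G])$ underlying the relative complex is the diagonal $\K[G]\ra\bigoplus_k\K[G]$, which is injective, so $C^0_l(G,\sh;\K[G])=0$ and therefore $B^1_l(G,\sh;\K[G])=0$. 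Consequently $Z^1_l(G,\sh;\K[G])=H^1(G,\sh;\K[G])$, and since $(G,\sh)$ is a $\K$-$\mathrm{PD}_2$ pair, Proposition~\ref{Keypro699}(1) (applied with $n=2$) gives $H^1(G,\sh;\K[G])=0$. Hence $D=0$. The same argument with the right regular module handles right derivations.

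The essential content is entirely Proposition~\ref{Keypro699}(1); the only step that needs a little care --- what I would treat as the main obstacle --- is the bookkeeping in the second paragraph: verifying that ``left derivation vanishing on $\sh$'' corresponds faithfully to ``relative $1$-cocycle'', and that under the paper's convention for the relative complex there is nothing to quotient by in degree one once $\sh\neq\emptyset$. Equivalently, one may observe directly that $\K[G]^{S_k}=0$ for each nonempty $S_k$ (such an $S_k$ being infinite, as a $\K$-$\mathrm{PD}_1$ group), which again forces $B^1_l(G,\sh;\K[G])=0$.
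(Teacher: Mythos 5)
Your proof is correct and takes essentially the same route as the paper: identify $D$ with a relative cocycle in $Z^1_l(G,\sh;\K[G])$, note that $B^1_l(G,\sh;\K[G])=0$ by the very definition of the relative complex, and conclude from the vanishing $H^1(G,\sh;\K[G])=0$ of Proposition~\ref{Keypro699}(1). Your extra remarks (that $\sh\neq\emptyset$ is implicitly required, and the alternative observation $\K[G]^{S_k}=0$) are accurate but only make explicit what the paper leaves implicit.
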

\begin{proof}
This follows straightforwardly from $H^1(G, \sh ;\K[G ] ) =0 $ by Proposition \ref{Keypro699} and $ B^1(G, \sh ;\K[G ] ) =0 $ by definition.
\end{proof}

\begin{proof}[Proof of Theorem \ref{dere35}]
Let $\eta$ be the fundamental Fox pairing, which lies in $Z^1(G, Z^1(G;\K[G]))$ by Theorem \ref{Thm688}. From the definition of $ H^1( G, \sh ;H^1( G;\K [G])) $, for any $s \in \mathcal{S}$, $\eta(s ,\bullet )$ is null-cohomologous in $Z^1_r(G_1;\K[G])$; thus, there is $ a_s \in \K [G] $ such that $\eta( s ,h)= a_s(1-h^{-1})$ as required.

Next, we will show uniqueness. Suppose there is another such Fox pairing $ \eta '$. Then, $ \eta-\eta '( \zeta, a)=0$ for any $a \in G$ and $\zeta \in \mathcal{S}$. Thus, the transpose of $ \eta-\eta' $ lies in $ Z^1(G ; Z^1(G,\sh ;\K [G] ) ) =0$ by Lemma \ref{Keylem4434}. That is, $\eta = \eta'$, as desired.

We will prove the final statement. Suppose $m=1$, and $S_1 \cong \Z$. By Lemma \ref{kl992} below, there is a left derivation $D:G \ra \K [G]$ satisfying $D(\zeta)= a_{\zeta}- \aug(a_{\zeta}) + (1-\zeta)b_{\zeta} $ for some $b_{\zeta}\in \K[G]$. Let us define $k_{\eta} \in \K$ to be $ \aug(a_{\zeta}) $ and a Fox pairing $\eta'' $ by setting
$$\eta''(g,h)= \eta( g ,h)-D(g) (1-h) - (1-g)b_{\zeta} (1-h).$$
Then, if we can show that $k_{\eta} \neq 0$, then $k_{\eta}^{-1} \eta''$ implies the existence of the Fox pairing, since $\K$ is a field by assumption.

Thus, let us show that $k_{\eta} \neq 0$. Suppose $k_{\eta} =0$. Then, $\eta \in Z^1(G,\sh ; Z^1(G; \K [G] ) )$ by Proposition \ref{Keypro}. Then, the transposed $^t \eta$ lies in $ Z^1(G ; Z^1(G,\sh;\K [G] ) )$, which is zero by Lemma \ref{Keylem4434}. Thus, $\eta $ is zero and does not represent a generator of $ H^1( G,\sh ;H^1( G;\K [G])) $, which is a contradiction.
\end{proof}

\begin{lem}\label{kl992}
Suppose $m=1$ and $S_1 \cong \Z = \langle \zeta \rangle.$ Then, for any non-zero $ b \in \Ker(\aug) \subset \K[G]$, a left derivation $D : G \ra \K[G]$ and $c \in \K[G]$ exist such that $D(\zeta)=b- (1- \zeta)c$.
\end{lem}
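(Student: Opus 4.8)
The plan is to recast the statement in the cohomological language of Section~\ref{SS1} and then read it off from the long exact sequence of the pair $(G,\sh)$. By Example~\ref{ex1} a left derivation $D\colon\K[G]\to\K[G]$ is the same thing as a cocycle in $Z^1_l(G;\K[G])$, and a direct computation of the coboundary $\delta^0_l$ shows that the inner ones are exactly those with $D(\zeta)\in(1-\zeta)\K[G]$. Since $S_1=\langle\zeta\rangle\cong\Z$ is free, evaluation at $\zeta$ identifies $Z^1_l(S_1;\K[G])$ with $\K[G]$ and carries $B^1_l(S_1;\K[G])$ onto $(1-\zeta)\K[G]$, so $H^1_l(S_1;\K[G])\cong\K[G]/(1-\zeta)\K[G]$ as a right $\K[G]$-module (the structure surviving on the cohomology of the complex built from the left structure of $\K[G]$). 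Under these identifications the assertion to be proved becomes: the image of $\Ker(\aug\colon\K[G]\to\K)$ in $\K[G]/(1-\zeta)\K[G]$ --- which one checks equals $\Ker(\aug\colon\K[G]/(1-\zeta)\K[G]\to\K)$ --- is contained in the image of the restriction map $\bar\rho\colon H^1_l(G;\K[G])\to H^1_l(S_1;\K[G])$.

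First I would feed this into the long exact sequence of the pair $(G,\sh)$ with coefficients in $\K[G]$,
\[ H^1_l(G,\sh;\K[G])\to H^1_l(G;\K[G])\xrightarrow{\ \bar\rho\ }H^1_l(S_1;\K[G])\xrightarrow{\ \delta\ }H^2_l(G,\sh;\K[G])\to H^2_l(G;\K[G]). \]
Proposition~\ref{Keypro699}(1) kills the first term, so $\bar\rho$ is injective with image $\Ker(\delta)$. Exactly as in the proof of Proposition~\ref{Keypro699}, $\mathrm{PD}_2$-duality together with the Shapiro lemma give $H^2_l(G,\sh;\K[G])\cong H_0(G;\K[G])\cong\K$ with trivial right $\K[G]$-action and $H^2_l(G;\K[G])\cong H_0(G,\sh;\K[G])=0$; hence $\delta$ is a surjection of right $\K[G]$-modules $\K[G]/(1-\zeta)\K[G]\twoheadrightarrow\K$.

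It then remains to identify $\delta$. Since $\K[G]/(1-\zeta)\K[G]\cong\K\otimes_{\K[S_1]}\K[G]$ as right $\K[G]$-modules, tensor--hom adjunction gives $\Hom_{\K[G]}\!\big(\K\otimes_{\K[S_1]}\K[G],\K\big)\cong\Hom_{\K[S_1]}(\K,\K)\cong\K$, a generator being the induced augmentation; therefore $\delta=\lambda\cdot\aug$ for some $\lambda\in\K$, and surjectivity of $\delta$ forces $\lambda$ to be a unit. Consequently $\Ker(\delta)=\Ker(\aug\colon\K[G]/(1-\zeta)\K[G]\to\K)$, which by the first paragraph coincides with the image of $\Ker(\aug\colon\K[G]\to\K)$ and equals $\mathrm{im}(\bar\rho)$. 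Unwinding the identifications of the first paragraph, the given $b$ therefore lifts to a cocycle in $Z^1_l(G;\K[G])$ --- a left derivation $D$ --- with $D(\zeta)\equiv b\pmod{(1-\zeta)\K[G]}$, that is $D(\zeta)=b-(1-\zeta)c$ for a suitable $c\in\K[G]$, as required. (Alternatively $\Ker(\delta)$ can be extracted from the explicit form of $H^1(G;\K[G])$ in Proposition~\ref{Keypro699}(3), but only after matching the restriction-to-$S_1$ picture with the homology long exact sequence used there, via the compatibility of the cap product with boundary maps; the $\Hom$-computation above avoids this.)

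The step I expect to be the main obstacle is the bookkeeping of the left versus right $\K[G]$-module structures: the complexes are formed with one structure while the other survives on cohomology, and the argument hinges on $\bar\rho$ and $\delta$ being right $\K[G]$-linear, which is what makes the one-line $\Hom$-computation pin $\delta$ down up to a unit. Everything else --- the long exact sequence of the pair, the $\mathrm{PD}_2$-duality isomorphism, the Shapiro lemma, and the fact that a derivation on a free group is determined freely by its value on a basis --- is routine.
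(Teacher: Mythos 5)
Your proof is correct and takes essentially the same route as the paper: both arguments compute the image of the restriction map $H^1(G;\K[G])\to H^1(S_1;\K[G])\cong\K[G]/(1-\zeta)\K[G]$ via the long exact sequence of the pair $(G,\sh)$ combined with $\mathrm{PD}_2$-duality and the Shapiro lemma, and identify that image with the augmentation kernel. The only (minor) divergence is in how the map to $\K$ is recognized as the augmentation --- the paper traces the Shapiro isomorphism through the dual homology ladder and does a diagram chase, whereas you pin down the connecting map $\delta$ abstractly using its right $\K[G]$-linearity, the one-dimensional $\Hom$-computation, and surjectivity; this is a clean and valid substitute for that step.
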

\begin{proof} 
Consider the (co)-homology long exact sequences:
$${\normalsize
\xymatrix{ H^1( G,\sh;\K[G]) \ar[d]^{\cong} \ar[r]&H^1( G;\K[G]) \ar[d]^{\cong} \ar[r] & H^1( \sh;\K[G]) \ar[d]^{\cong} & \\
H_1(G;\K[G])=0 \ar[r] &H_1( G,\sh;\K[G]) \ar[r] & H_0( \sh;\K[G]) \ar[r]^{\!\!\!\!\!\!\!\!\!\! (\mathrm{inc})_*} & H_0( G;\K[G]) \cong \K \lra 0.
}}
$$
Here, the vertical maps are the cap products in duality. By reconsidering the proof of the isomorphism $ H_0 ( \sh;\K[G]) \cong \K[G]/(1-\zeta) $ in the Shapiro lemma, the map $(\mathrm{inc})_*$ coincides with the augmentation map. Hence, diagram chasing leads to the conclusion that, for any derivation $ D': L \ra \K[G]$ such that $D'(\zeta) =b$, there is an extension $D$ of $D'$ such that $D(\zeta)=b$ up to coboundary. Namely $D(\zeta)=b- (1- \zeta)c$ for some $c\in \K[G] $.
\end{proof}

\section{Fox pairings of surface groups and 2-dimensional orbifolds}
\label{SS234446}
We will examine surface groups as a typical example of Poincar\'{e} pairs of dimension two and discuss a theorem in \cite{Tur}. Take a connected compact surface $\Sigma$ with boundaries. Let $G$ be $\pi_1(\Sigma)$ and $ \sh$ be $\pi_1( \partial \Sigma)$. If $ \Sigma$ is orientable, we let $\K$ be $\Z$; otherwise, we let $\K$ be $\Z/2$. Then, by Poincar\'{e}-Lefschetz duality, the pair $(G,\sh) $ is a $\K$-$\mathrm{PD}_2$ pair.

Following \cite[\S\S 1.4--1.6]{Tur} (see also Section 3.1 in \cite{Mas}), we can define a Fox pairing $\eta_{\Sigma}: G^2 \ra \K[G]$, as follows. Fix a base point $* \in \partial \Sigma $, and $\bullet , \dagger \in \partial \Sigma $ be additional points such that $ \bullet <* < \dagger$ in the same component of $\partial \Sigma $. Let $I$ be the interval $[0,1]$, $\nu_{\bullet *}$ the path from $\bullet$ to $*$, and $\nu_{ * \dagger}$ the path from $* $ to $\dagger$. Let $ \overline{ \nu}_{* \bullet }$ and $\overline{\nu}_{ * \dagger }$ be the respective paths with opposite orientations. For $a,b\in \pi_1( \Sigma, *)$, we choose a loop $\alpha$ based at $\bullet$ such that $\overline{ \nu}_{* \bullet } \alpha \nu_{ \bullet *} $ represents $a$ and a loop $\beta$ based at $\dagger$ such that $\overline{ \nu}_{* \dagger } \beta \nu_{ \dagger *} $ represents $b$. Here, we may assume that $\alpha$ and $\beta$ are generic immersions and that they cross transversally. Then,
$$ \eta_{\Sigma}^{\rm pre}(a,b):= \sum_{p \in \alpha \cap \beta} \varepsilon_{p}(\alpha, \beta) \overline{ \nu}_{* \bullet } \alpha_{\bullet p} \beta_{p \dagger} \overline{\nu}_{\dagger *} \in \K[\pi_1(\Sigma)]. $$
Here, the sign $\varepsilon_{p}(\alpha, \beta) \in \{ \pm 1\}$ is defined to be $+1$ if and only if a unit tangent vector of $\alpha$ at $p$ followed by a unit tangent vector of $\beta$ at $p$ gives a positively-oriented frame of $\Sigma.$ If $\Sigma$ is not orientable, $\varepsilon_{p}(\alpha, \beta) $ is always one. The bilinear extension of $ \eta_{\Sigma}^{\rm pre}$ is written as $ \eta_{\Sigma}: \K[G]^{2} \ra \K[G]$ and is known to be a Fox pairing. Regarding the generator $ \nu \in \sh \cong \sqcup^q \Z$, it is known that $\eta_{\Sigma}(\nu, G) \subset (\nu -1) \Z[G]$ (see Page 232 in \cite{Tur}). Thus, $\eta_{\Sigma} $ can be regarded as a 1-cocycle in $Z^1( G_2, \sh ; H^1( G_1 ;\K[G])) \cong \K$ as in Proposition \ref{Keypro}.
\begin{prop}\label{de4r3e35}
The Fox pairing $\eta_{\Sigma}$ represents the fundamental Fox pairing of $\pi_1(\Sigma)$. 
\end{prop}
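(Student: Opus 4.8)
The plan is to show that $\eta_{\Sigma}$ represents a \emph{nonzero} class in $H^1(G_2,\sh;H^1(G_1;\K[G]))\cong\K$; since that group is one-dimensional over $\K$ (by Theorem~\ref{Thm6}) and $\eta_\Sigma$ already lies in $Z^1(G_2,\sh;H^1(G_1;\K[G]))$ by the remark preceding the proposition, this will identify it with the fundamental Fox pairing up to a unit, which is all Definition~\ref{def6th} asks for. Following Theorem~\ref{dere35}, a Fox pairing $\eta$ with $\eta(\zeta,g)=a_\zeta(1-g)$ represents a basis precisely when the scalar $k_\eta:=\aug(a_\zeta)$ is nonzero (in the field case; in general one argues that the class is a unit multiple of the basis, equivalently that $\eta$ is not killed by passage to the \emph{relative} cocycle group $Z^1(G_2,\sh;Z^1(G_1;\K[G]))$, which by Proposition~\ref{Keypro} and Lemma~\ref{Keylem4434} would force $\eta=0$). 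So the whole proof reduces to computing $\eta_{\Sigma}(\nu,g)$ for the boundary generator $\nu$ and extracting its augmentation.

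First I would unwind the topological formula. From Page~232 of \cite{Tur}, $\eta_\Sigma(\nu,G)\subset(\nu-1)\Z[G]$; writing $\eta_\Sigma(\nu,g)=a_\nu(1-\bar g)$ (matching the sign conventions of \eqref{t5} after transposition), the coefficient $a_\nu$ is a signed count of intersection points of a loop representing $\nu$ with a loop representing $g$, weighted by the resulting group elements. Taking $\aug$ collapses all group elements, so $\aug(a_\nu)$ is just the \emph{total signed intersection number} of the boundary curve $\nu$ with a transverse loop $g$ — and by choosing $g$ to be (a based version of) a properly immersed arc or loop meeting $\partial\Sigma$ appropriately, or more cleanly by a direct homological computation, this number equals the algebraic intersection pairing $[\partial\Sigma]\cdot[g]$ in $\Sigma$, which can be made $\pm1$. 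Concretely: pick $g$ dual to $\nu$ in $H_1(\Sigma,\partial\Sigma)$ versus $H_1(\Sigma)$; then $\aug(a_\nu)=\pm1\neq0$. I would present this as the key computation: $k_{\eta_\Sigma}=\pm1$.

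Granting $k_{\eta_\Sigma}\neq0$, Theorem~\ref{dere35} (its uniqueness half and the $k_\eta\neq0$ criterion in its proof) immediately gives that $\eta_\Sigma$ is a basis of the $\K$ in \eqref{l6654}; when $\K$ is a field this even pins it down to a unit multiple of the normalized fundamental Fox pairing of Theorem~\ref{dere35}, and when $\Sigma$ has a single boundary component one gets $S_1\cong\Z$ and can invoke Corollary~\ref{dere3335}'s setup directly. I would also note explicitly that $\eta_\Sigma$ need not lie in the \emph{relative} $Z^1(G_2,\sh;Z^1(G_1;\K[G]))$ — indeed $k_{\eta_\Sigma}\neq 0$ is exactly the obstruction — consistent with the remark after Theorem~\ref{Thm688}.

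The main obstacle I expect is the sign/bookkeeping step that turns $\aug(a_\nu)$ into a genuine intersection number: the formula for $\eta_\Sigma^{\rm pre}$ involves basepoints $\bullet,*,\dagger$, auxiliary paths $\nu_{\bullet *},\nu_{*\dagger}$, and a transpose/involution when comparing to the normalization in \eqref{t5}, so one must be careful that no cancellation occurs under $\aug$ and that the count is not, say, identically zero for degree reasons. The cleanest route around this is probably \emph{not} to compute $\aug(a_\nu)$ for a clever $g$ by hand, but to observe that $\eta_\Sigma$ descends to the relative complex iff $\aug(a_\nu)=0$, and that $\eta_\Sigma\neq0$ (e.g.\ because the Goldman--Turaev or Dehn-twist applications in \cite{MT,Mas} show $\eta_\Sigma$ is nondegenerate, or simply by exhibiting one pair $(a,b)$ with $\eta_\Sigma(a,b)\neq0$), whence by Lemma~\ref{Keylem4434} and the transpose argument in the proof of Theorem~\ref{dere35} it cannot be a relative cocycle; therefore $k_{\eta_\Sigma}\neq0$ and $\eta_\Sigma$ is the fundamental Fox pairing.
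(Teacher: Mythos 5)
There is a genuine gap in the central computation. You propose to extract the scalar $k_{\eta_\Sigma}=\aug(a_\nu)$ from the identity $\eta_\Sigma(\nu,g)=a_\nu(1-g)$ by identifying it with a signed intersection count, asserting that ``$\aug(a_\nu)$ is just the total signed intersection number of the boundary curve $\nu$ with a transverse loop $g$,'' equal to $[\partial\Sigma]\cdot[g]=\pm1$. This is wrong on two counts. First, the total signed intersection number of $\nu$ with $g$ is $\aug\bigl(\eta_\Sigma(\nu,g)\bigr)=\aug(a_\nu)\,\aug(1-g)=0$, so applying $\aug$ to the topological formula tells you nothing about $\aug(a_\nu)$; recovering $a_\nu$ requires actually performing the factorization of $\sum_p\varepsilon_p\,\overline{\nu}_{*\bullet}\alpha_{\bullet p}\beta_{p\dagger}\overline{\nu}_{\dagger *}$ by $(1-g)$ (equivalently, invoking the explicit formula for $\eta_\Sigma(\nu,\cdot)$ from \cite{Tur}), which is exactly the bookkeeping you defer. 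Second, even the geometric quantity you name is identically zero: a boundary component is freely homotopic into $\partial\Sigma$, so its algebraic intersection number with every \emph{closed} loop vanishes — the cycle dual to $\nu$ is a properly embedded arc, i.e.\ a class in $H_1(\Sigma,\partial\Sigma)$, not an element $g\in\pi_1(\Sigma)$. Your own caveat (``the count is not, say, identically zero for degree reasons'') is precisely what happens. Your fallback route repairs only part of this: showing $\eta_\Sigma\neq 0$ as a pairing and invoking Lemma~\ref{Keylem4434} yields $k_{\eta_\Sigma}\neq 0$, hence a nonzero class; but over $\K=\Z$ (the orientable case) Definition~\ref{def6th} demands a \emph{generator} of $\K$, and a nonzero integer need not be a unit. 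The fallback suffices only for $\K=\Z/2$. (There is also the smaller point that the $k_\eta$ machinery of Theorem~\ref{dere35} is set up for $m=1$, $S_1\cong\Z$, whereas $\partial\Sigma$ may have several components.)

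The paper's argument sidesteps both problems: it applies the coefficient map $\aug_*$ to the whole class, landing in $H^1(G_2,\sh;\K)\otimes H^1(G_1;\K)$, where $\aug_*(\eta_\Sigma)$ is the Poincar\'e--Lefschetz intersection pairing between $H_1(\Sigma,\partial\Sigma)$ and $H_1(\Sigma)$ evaluated on \emph{interior} loops — there the entries really are $\pm1$ — and primitivity of that element forces $[\eta_\Sigma]$ to be a generator of $\K$, not merely nonzero. If you want to keep your boundary-evaluation strategy, you must either quote Turaev's explicit computation of $\eta_\Sigma(\nu,g)$ (which gives $a_\nu$ with $\aug(a_\nu)=\pm1$ directly) or replace the intersection with $g$ by the intersection with a properly embedded arc dual to $\nu$; as written, the key scalar is never actually computed.
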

\begin{proof}
%Let $\chi (\Sigma) \in \Z$ be the Euler characteristic. 
The augmentation $\aug: \K[G] \ra \K$ as the transformation coefficient induces $\K \cong H^1( G_2, \sh ; H^1( G_1 ;\K[G])) \ra H^1( G_2, \sh ; H^1( G_1 ;\K) )\cong H^1( G_2,\sh;\K) \otimes H^1( G_1 ;\K) )$. Notice that $\aug(\eta^{\rm pre}_{\Sigma}(a,b))\in \K $ is equal to the intersection number of $a$ and $b$ by definition. Therefore, $\aug_* (\eta_{\Sigma} ) $ is equal to the intersection form $I_{\Sigma} $ on $H^1(\Sigma ; \K) $. Since all of the entries of $I_{\Sigma} $ lie in $\{ -1,0,1\} $, $\eta_{\Sigma} $ must be a generator of $ H^1( G_2, \sh ; H^1( G_1 ;\K[G])) \cong \K$, as required.
\end{proof}

Furthermore, let us show that uniqueness immediately follows from Theorem \ref{dere35} and make a comparison with \cite{Tur}. 
\begin{cor}\label{der3e35}
Let the pair $(G,\sh)$ be $(\pi_1(\Sigma), \pi_1(\partial \Sigma) )$, as above. Let $ \zeta_1, \dots, \zeta_m \in \pi_1(\partial \Sigma) \cong \sqcup^m \Z$ be generators. If a Fox pairing $\eta: G^2 \ra \K[G]$ satisfies
\begin{equation}\label{t5888} \eta( \zeta_k , g)= \eta_{\Sigma} (\zeta_k, g) \ \ \mathrm{for \ any \ } g \in G, \ k\leq m, \end{equation}
then the pairing $ \eta$ is equal to $\eta_{\Sigma} $. Such a pairing is a representative of a generator of \eqref{l6654}.
\end{cor}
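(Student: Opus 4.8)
The plan is to derive Corollary~\ref{der3e35} as a direct consequence of the uniqueness part of Theorem~\ref{dere35}, applied with the family of subgroups $S_k = \pi_1(\partial_k \Sigma) \cong \Z$ generated by the $\zeta_k$. First I would record that, by Proposition~\ref{de4r3e35}, the Fox pairing $\eta_\Sigma$ represents a basis of $H^1(G_2,\sh;H^1(G_1;\K[G])) \cong \K$, and that, by the discussion preceding Proposition~\ref{de4r3e35}, $\eta_\Sigma(\zeta_k,G) \subset (\zeta_k - 1)\K[G]$, so there exist $a_{\zeta_k} \in \K[G]$ with $\eta_\Sigma(\zeta_k,g) = a_{\zeta_k}(1-g)$ for all $g \in G$. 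Hence $\eta_\Sigma$ is \emph{a} Fox pairing of the type produced by Theorem~\ref{dere35}, i.e.\ one that represents a basis of \eqref{l6654} and satisfies \eqref{t5}.

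Next, suppose $\eta \colon G^2 \to \K[G]$ is any Fox pairing satisfying \eqref{t5888}. The key observation is that $\eta - \eta_\Sigma$ then vanishes on $\zeta_k \times G$ for every $k$, so for every $s \in \sh$ the right-derivation $(\eta-\eta_\Sigma)(s,\bullet)$ vanishes (since each $S_k$ is cyclic, vanishing on the generator forces vanishing on all of $S_k$, using \eqref{t2} and the fact that $\aug(\zeta_k) = 1$). Therefore the transpose $(\eta - \eta_\Sigma)^t$, which by Example~\ref{deriex} sends $(g,h)$ to $g\,\overline{(\eta-\eta_\Sigma)(h^{-1},g^{-1})}\,h$, lies in $Z^1(G_2;Z^1(G_1,\sh;\K[G]))$. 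But by Lemma~\ref{Keylem4434} there is no non-trivial left derivation $G \to \K[G]$ killing all of $\sh$, so the inner module $Z^1(G_2;Z^1(G_1,\sh;\K[G]))$ — more precisely, the relevant cocycle module — is zero; this is exactly the argument used in the uniqueness half of the proof of Theorem~\ref{dere35}. Consequently $(\eta-\eta_\Sigma)^t = 0$, hence $\eta - \eta_\Sigma = 0$, i.e.\ $\eta = \eta_\Sigma$.

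Finally, since $\eta = \eta_\Sigma$ and $\eta_\Sigma$ represents the fundamental Fox pairing by Proposition~\ref{de4r3e35}, the pairing $\eta$ is a representative of a generator of \eqref{l6654}, which gives the last sentence of the corollary. In fact the cleanest packaging is simply to invoke Theorem~\ref{dere35} verbatim: that theorem asserts the \emph{uniqueness} of a Fox pairing satisfying a prescribed behaviour $\eta(s,g) = a_s(1-g)$ on $\sh$ while representing a basis of \eqref{l6654}; both $\eta$ (by hypothesis \eqref{t5888} together with the computation $\eta_\Sigma(\zeta_k,g) = a_{\zeta_k}(1-g)$, which transports the same property to $\eta$) and $\eta_\Sigma$ (by Proposition~\ref{de4r3e35}) are such pairings, so they coincide.

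The only genuine subtlety — the ``hard part'' — is the bookkeeping that turns the hypothesis \eqref{t5888}, stated only on the generators $\zeta_k$, into the statement that $(\eta - \eta_\Sigma)(s,\bullet) = 0$ for \emph{every} $s \in \sh$, so that Lemma~\ref{Keylem4434} (via the transpose) can be applied; this is a short induction using the Fox-pairing identity \eqref{t2} and $\aug|_{S_k} = 1$, and it also needs the remark (already used implicitly above) that $\eta_\Sigma$ itself has the form \eqref{t5} on $\sh$, which follows from the cited fact $\eta_\Sigma(\nu,G) \subset (\nu-1)\K[G]$ on Page~232 of \cite{Tur}. Everything else is a direct citation of Proposition~\ref{de4r3e35}, Lemma~\ref{Keylem4434}, and the uniqueness clause of Theorem~\ref{dere35}, so no new computation is required.
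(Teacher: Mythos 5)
Your proof is correct and follows the paper's intended route: the paper offers no separate argument, saying only that the corollary ``immediately follows from Theorem \ref{dere35}'', and what you write is exactly the uniqueness step of that theorem's proof (the difference $\eta-\eta_{\Sigma}$ vanishes on $\sh\times G$, so its transpose lies in $Z^1(G_2;Z^1(G_1,\sh;\K[G]))=0$ by Lemma \ref{Keylem4434}). One trivial correction: propagating the vanishing from the generators $\zeta_k$ to all of $S_k$ in the \emph{first} argument uses the left rule \eqref{t1} rather than \eqref{t2}.
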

\noindent
A stronger version of this corollary is given in \cite[Theorem I and Corollary]{Tur} for the case that $ \Sigma$ is orientable.
%If $ \Sigma$ is orientable, this corollary is shown by \cite[Theorem I and Corollary]{Tur} as a stronger statement.

Next, we present a procedure for expressing the Fox pairings of a two-dimensional orbifold under some conditions. Let $ \Sigma_{\rm orb}$ be a compact 2-dimensional orbifold with circle boundaries. Suppose that a finite group $\Gamma $ and a surjective homomorphism $f: \pi_1(\Sigma_{\rm orb}) \ra \Gamma $ satisfying $f (\pi_1(\partial \Sigma_{\rm orb}))=\{ 1\}$ exist such that the associated covering space is an oriented surface $\Sigma$ and admits a group extension,
\begin{equation}\label{t555} 0 \ra \pi_1(\Sigma) \lra
\pi_1(\Sigma_{\rm orb}) \stackrel{f}{\lra} \Gamma \lra 0. \end{equation}
Then, by following the discussion in \cite[Theorem 7.6]{BE} or \cite[Proposition 5.3]{Fow}, we can easily establish the following proposition.
\begin{prop}\label{d43e35999}
Under the above assumption, $ ( \pi_1( \Sigma_{\rm orb}), \pi_1(\partial \Sigma_{\rm orb}))$ is a $\Q$-$ \mathrm{PD}_2$ pair.
\end{prop}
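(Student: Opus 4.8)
The plan is to exhibit $(\pi_1(\Sigma_{\rm orb}), \pi_1(\partial \Sigma_{\rm orb}))$ as a $\Q$-$\mathrm{PD}_2$ pair by transferring the known $\Q$-$\mathrm{PD}_2$ structure on $(\pi_1(\Sigma), \pi_1(\partial \Sigma))$ through the finite group extension \eqref{t555}, using the fact that $\Q[\Gamma]$ is semisimple so that taking $\Gamma$-invariants is exact. First I would recall, from \S\ref{SS234446}, that $(\pi_1(\Sigma), \pi_1(\partial \Sigma))$ is a $\Z$-$\mathrm{PD}_2$ pair (hence a $\Q$-$\mathrm{PD}_2$ pair) with fundamental class $e_\Sigma \in H_2(\pi_1(\Sigma), \pi_1(\partial \Sigma); \Q)$. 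Since $f(\pi_1(\partial \Sigma_{\rm orb})) = \{1\}$, the subgroup $\pi_1(\partial \Sigma_{\rm orb})$ maps isomorphically into $\pi_1(\Sigma)$, so we may identify $\pi_1(\partial \Sigma)$ with $\pi_1(\partial \Sigma_{\rm orb})$ and regard the pair $(\pi_1(\Sigma_{\rm orb}), \pi_1(\partial \Sigma_{\rm orb}))$ as built from the normal $\mathrm{PD}_2$ subpair by a finite quotient $\Gamma$. This is exactly the setup of \cite[Theorem 7.6]{BE} or \cite[Proposition 5.3]{Fow}.

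The key computation is a Shapiro-plus-invariants argument. Let $\Pi := \pi_1(\Sigma_{\rm orb})$, $P := \pi_1(\Sigma)$, and for any $\Q[\Pi]$-module $M$ I would relate $H^i(\Pi, \pi_1(\partial \Sigma_{\rm orb}); M)$ to $H^i(P, \pi_1(\partial \Sigma); M)^{\Gamma}$. Concretely: restricting the coefficients to $P$ and using that $[\Pi:P] = |\Gamma| < \infty$ is invertible in $\Q$, the Lyndon--Hochschild--Serre spectral sequence for \eqref{t555} collapses (all higher cohomology of the finite group $\Gamma$ with $\Q$-coefficients vanishes), giving $H^i(\Pi, \pi_1(\partial \Sigma_{\rm orb}); M) \cong H^i(P, \pi_1(\partial \Sigma); M)^{\Gamma}$ and dually $H_i(\Pi; M) \cong H_i(P; M)_{\Gamma} \cong H_i(P;M)^{\Gamma}$. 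Then I would take $e \in H_2(\Pi, \pi_1(\partial \Sigma_{\rm orb}); \Q)$ to be the image of $e_\Sigma$ (which lands in the $\Gamma$-invariants because $\Gamma$ acts on $\Sigma$ orientation-preservingly, since the covering space is an \emph{oriented} surface). Capping with $e_\Sigma$ is a $\Gamma$-equivariant isomorphism $H^i(P, \pi_1(\partial\Sigma); M) \xrightarrow{\sim} H_{2-i}(P; M)$ for every $\Q[P]$-module $M$; taking $\Gamma$-invariants on both sides — exact over $\Q$ — and identifying each side with the corresponding cohomology of $\Pi$ via the collapse above yields that $e \cap \bullet$ is an isomorphism $H^i(\Pi, \pi_1(\partial \Sigma_{\rm orb}); M) \to H_{2-i}(\Pi; M)$, and symmetrically for the other direction. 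This holds for any $\Q[\Pi]$-module $M$, which is the definition of a $\Q$-$\mathrm{PD}_2$ pair.

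There is a small caveat: the duality definition in \S\ref{OOO} demands the isomorphism for \emph{arbitrary} $\Q[G]$-module coefficients, whereas the invariants argument is cleanest when one restricts coefficients along $\Pi \to P$; I would handle this by noting that every $\Q[\Pi]$-module is in particular a $\Q[P]$-module, that the cap-product with $e$ is natural in $M$, and that the collapse identifications are natural as well, so naturality propagates the isomorphism from the restricted statement back to the $\Pi$-level statement. The main obstacle is precisely bookkeeping the $\Gamma$-equivariance: one must check that the fundamental class $e_\Sigma$ is genuinely $\Gamma$-fixed (this uses the orientability/orientation-preserving hypothesis in \eqref{t555} and is why $\Q$, not $\Z/2$, is the natural ring here), and that the cap-product isomorphism, the Shapiro identifications, and the spectral-sequence collapse are all compatible with the $\Gamma$-action so that passing to invariants is legitimate. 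Once that compatibility is in place, the rest is the standard semisimplicity argument, and I would simply cite \cite[Theorem 7.6]{BE} or \cite[Proposition 5.3]{Fow} for the precise form of the collapse.
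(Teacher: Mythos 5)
Your proposal is correct and follows essentially the same route as the paper: the paper's entire proof is the citation of \cite[Theorem 7.6]{BE} and \cite[Proposition 5.3]{Fow}, whose content is precisely the restriction-to-the-finite-index-subgroup plus $\Gamma$-invariants argument (using that $|\Gamma|$ is invertible in $\Q$, so the LHS spectral sequence collapses and taking invariants is exact) that you spell out. The one hypothesis to read carefully is that ``the associated covering space is an oriented surface'' must be taken to mean that $\Gamma$ acts orientation-preservingly, so that $e_\Sigma$ is genuinely $\Gamma$-fixed; you correctly identify this as the key compatibility to check.
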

Moreover, as in \cite[Section III.6]{Bro}, the Shapiro lemma yields an isomorphism,
$$ \mathcal{I} : H^1(\pi_1( \Sigma_{\rm orb}) ; \Q [\pi_1( \Sigma_{\rm orb})]) \cong H^1(\pi_1( \Sigma ) ; \Q [\pi_1( \Sigma)]). $$
Thus, the restriction map $ \pi_1(\Sigma) \hookrightarrow \pi_1(\Sigma_{\rm orb}) $ yields a homomorphism, 
$$\mathcal{I} \otimes \mathrm{res}_* : H^1(\pi_1( \Sigma_{\rm orb}), \pi_1(\partial \Sigma_{\rm orb}) ; H^1(\pi_1( \Sigma_{\rm orb});\Q[\pi_1( \Sigma_{\rm orb})])) \lra H^1(\Sigma, \partial \Sigma ; H^1( \Sigma ;\Q[\pi_1( \Sigma)])) . $$
\cite[Proposition III.10.4]{Bro} uses transfer maps to show that $\mathcal{I} \otimes \mathrm{res}_*$ is injective; since the image and domain of $\mathcal{I} \otimes \mathrm{res}_* $ are computed as $\Q$ by Theorem \ref{Thm6}, $\mathcal{I} \otimes \mathrm{res}_*$ is an isomorphism. As is known \cite{Bro}, the inverse map is constructed by a transfer map $\mathrm{Tr}$ with respect to $\mathrm{res}_*$. Hence, if we explicitly describe the transfer map on a chain level as in \cite[Section III.9]{Bro}, we can express $ \mathrm{Tr}_*( \eta_{\Sigma})$ as well. To summarize:
\begin{prop}\label{d43e35}
Under the above assumption, the fundamental Fox pairing of the orbifold group $\pi_1( \Sigma_{\rm orb}) $ over $\Q$ is represented by $c_{\Sigma} \mathrm{Tr}_*( \eta_{\Sigma}) $ for some $c_{\Sigma} \in \Q^{\times}$.
\end{prop}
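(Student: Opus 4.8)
The plan is to turn the statement into a one-dimensional linear-algebra fact, using Proposition \ref{de4r3e35} to identify $\eta_{\Sigma}$ and the transfer formula of \cite{Bro} to move it to the orbifold group. Write $G_{\mathrm{orb}} = \pi_1(\Sigma_{\mathrm{orb}})$ and $\sh_{\mathrm{orb}} = \pi_1(\partial \Sigma_{\mathrm{orb}})$. By Proposition \ref{d43e35999} the pair $(G_{\mathrm{orb}}, \sh_{\mathrm{orb}})$ is a $\Q$-$\mathrm{PD}_2$ pair, so by Theorem \ref{Thm6} the group $H^1(G_{\mathrm{orb}}, \sh_{\mathrm{orb}}; H^1(G_{\mathrm{orb}}; \Q[G_{\mathrm{orb}}]))$ is isomorphic to $\Q$, and by definition the fundamental Fox pairing $\eta_{\mathrm{orb}}$ of $G_{\mathrm{orb}}$ represents a generator of it. On the other hand, Proposition \ref{de4r3e35} says that $\eta_{\Sigma}$ represents a generator of the target $H^1(\Sigma, \partial \Sigma; H^1(\Sigma; \Q[\pi_1(\Sigma)])) \cong \Q$ of the homomorphism $\mathcal{I} \otimes \mathrm{res}_*$. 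Hence, to prove the proposition it suffices to show that $\mathrm{Tr}_*(\eta_{\Sigma})$ is a nonzero element of $H^1(G_{\mathrm{orb}}, \sh_{\mathrm{orb}}; H^1(G_{\mathrm{orb}}; \Q[G_{\mathrm{orb}}])) \cong \Q$ and that this class is represented by a Fox pairing; then $\eta_{\mathrm{orb}} = c_{\Sigma}\, \mathrm{Tr}_*(\eta_{\Sigma})$ for a unique $c_{\Sigma} \in \Q^{\times}$ automatically.

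Next I would invoke the standard behaviour of transfer maps. Since $\pi_1(\Sigma)$ sits in $G_{\mathrm{orb}}$ with finite index $|\Gamma|$, the composite $\mathrm{Tr}_* \circ \mathrm{res}_*$ is multiplication by $|\Gamma|$ on the cohomology of $G_{\mathrm{orb}}$ (\cite[Sections III.9--10]{Bro}); combined with the compatibility of $\mathrm{Tr}_*$ with the Shapiro isomorphism $\mathcal{I}$ and with the cap-product duality isomorphisms entering Theorem \ref{Thm6}, this yields $\mathrm{Tr}_* \circ (\mathcal{I} \otimes \mathrm{res}_*) = |\Gamma| \cdot \mathrm{id}$ on the one-dimensional space $H^1(G_{\mathrm{orb}}, \sh_{\mathrm{orb}}; H^1(G_{\mathrm{orb}}; \Q[G_{\mathrm{orb}}]))$. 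In particular $\mathrm{Tr}_*$ is $|\Gamma|$ times the inverse of the isomorphism $\mathcal{I} \otimes \mathrm{res}_*$, so it is itself an isomorphism $\Q \to \Q$ and $\mathrm{Tr}_*(\eta_{\Sigma}) \neq 0$. Finally, by Theorem \ref{Thm688} every class in $H^1(G_{\mathrm{orb}}, \sh_{\mathrm{orb}}; H^1(G_{\mathrm{orb}}; \Q[G_{\mathrm{orb}}]))$ is represented by a Fox pairing; concretely, applying the chain-level transfer of \cite[Section III.9]{Bro} to the double cocycle corresponding to $\eta_{\Sigma}$ under Proposition \ref{Keypro} produces an explicit Fox pairing of $G_{\mathrm{orb}}$ whose class is $\mathrm{Tr}_*(\eta_{\Sigma})$, and $c_{\Sigma}$ times it represents $\eta_{\mathrm{orb}}$.

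I expect the main obstacle to be the compatibility assertion in the second step: one must check that the chain-level transfer of \cite[Section III.9]{Bro}, carried through the double complex $C^{*}_l(G_2, \sh; Z^1_r(G_1; M))$ of Proposition \ref{Keypro}, commutes up to sign with the Shapiro map $\mathcal{I}$ and with the duality isomorphisms, so that $\mathrm{Tr}_* \circ (\mathcal{I} \otimes \mathrm{res}_*)$ really is multiplication by the index and not merely some a priori unknown endomorphism. Once this bookkeeping is done, one-dimensionality of both sides (Theorem \ref{Thm6}) forces the result immediately; in fact the argument only needs that $\mathrm{Tr}_* \circ (\mathcal{I} \otimes \mathrm{res}_*)$ is injective, which is already \cite[Proposition III.10.4]{Bro}, so even a weaker compatibility statement would suffice.
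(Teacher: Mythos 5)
Your proposal is correct and follows essentially the same route as the paper: reduce to the one-dimensionality of both cohomology groups via Theorem \ref{Thm6}, use Proposition \ref{de4r3e35} to see that $\eta_{\Sigma}$ generates the target, and exploit the transfer--restriction relationship (the paper cites the injectivity of $\mathcal{I}\otimes\mathrm{res}_*$ from \cite[Proposition III.10.4]{Bro}, which is exactly the weaker statement you note suffices) to conclude that $\mathrm{Tr}_*(\eta_{\Sigma})$ is a nonzero multiple of the fundamental Fox pairing. Your extra remark that $\mathrm{Tr}_*\circ(\mathcal{I}\otimes\mathrm{res}_*)=|\Gamma|\cdot\mathrm{id}$ makes the scalar more explicit but is not needed beyond what the paper already uses.
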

We conclude this section by mentioning (co)-bracket structures. According to the discussion in \cite[Remark 7.4]{MT}, such a Fox pairing defines a bracket $ [,]: \K[ \mathcal{G}/\mathrm{conj}] \otimes \K[ \mathcal{G}/\mathrm{conj}] \lra \K[ \mathcal{G}/\mathrm{conj}] $, where $ \mathcal{G}= \pi_1( \Sigma_{\rm orb})$, and $ \mathcal{G}/\mathrm{conj}$ is the set of conjugacy classes of $ \mathcal{G}$. It might be an interesting to see whether the bracket defines a Lie algebra structure on $\K[ \mathcal{G}/\mathrm{conj}] $. Incidentally, Appendix \ref{fff} discusses the existence of the cobracket on $\K[ \mathcal{G}/\mathrm{conj}] $.

\section{Fox pairings of 3-manifold groups}
\label{S997}
In this section, we focus on Fox pairings derived from 3-manifold groups. Let $X$ be a connected orientable compact 3-manifold, where the boundary $ \partial X$ is non-empty. Let $\iota$ be the inclusion $\iota:\partial X \hookrightarrow X$. By Proposition \ref{Keypro699}, if $X$ is aspherical and $\iota_*: \pi_1(\partial X ) \ra \pi_1(X ) $ is injective, then every Fox pairing is null-cohomologous. Thus, we shall consider non-aspherical 3-manifolds and show (Propositions \ref{ksoiw} and \ref{kso04w}) that non-trivial Fox pairings of $\pi_1(X)$ are derived from the homotopy groups $\pi_*(X)$ and the (non)-injectivity of $\iota_*$.

We will begin by discussing the first cohomology groups of $\pi_1(X)$. As a basic fact (see \cite[Chapter II]{Bro}), recall the canonical isomorphism between the singular cohomology and the group cohomology of degree one, that is, $H^1(\pi_1(X),\pi_1(\partial X );M) \cong H^1(X,\partial X;M)$ for any coefficient $M$.
\begin{prop}
[{cf. Proposition \ref{Keypro699}}]\label{ksoiw} Let $G$ be $\pi_1(X)$, and $\sh $ be $\pi_1(\partial X)$ as above. Assume that $ \mathrm{Im}(\iota_*)$ is not zero in $\pi_1(X)$. Then, the relative $H^1( X, \partial X; \K [G]) $ is isomorphic to $\pi_2(X) \otimes \K$; in particular, the following isomorphism holds:
\begin{equation}\label{t77333}H^1(\pi_1(X); H^1( X, \partial X; \K [G]) ) \cong H^1(\pi_1(X);\pi_2(X)\otimes_{\Z} \K) . \end{equation}
Furthermore, there is an exact sequence,
$$ 0 \lra \pi_2(X) \otimes_{\Z} \K \lra H^1 (X ; \K[G]) \lra H_1( \partial X ;\K[G]) \lra 0. $$
\end{prop}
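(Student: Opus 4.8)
The plan is to deduce everything from a single geometric input: since $X$ is a connected compact $3$-manifold with nonempty boundary, the universal cover $\widetilde X$ is a noncompact $3$-manifold, hence its homology with coefficients in $\Z$ is concentrated in degrees $0,1,2$, and moreover $H_2(\widetilde X;\Z)\cong \pi_2(X)$ by the Hurewicz theorem (noting $\pi_1$ acts, so this is an isomorphism of $\Z[G]$-modules). More precisely, the $\K[G]$-chain complex of $\widetilde X$ is, up to chain homotopy, a complex of free $\K[G]$-modules of length three, so $H_i(X;\K[G])=H_i(\widetilde X;\K)=0$ for $i\geq 3$, while $H_2(X;\K[G])\cong \pi_2(X)\otimes_\Z\K$ and $H_3(X,\partial X;\K[G])=0$ follows by Poincaré--Lefschetz duality for the (noncompact, but $3$-dimensional) universal cover. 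Actually the cleanest route is to invoke the duality already available on the $\K[G]$-level: $X$ is an orientable $3$-manifold, so $H^i(X,\partial X;\K[G])\cong H_{3-i}(X;\K[G])$ and $H^i(X;\K[G])\cong H_{3-i}(X,\partial X;\K[G])$. Then $H^1(X,\partial X;\K[G])\cong H_2(X;\K[G])\cong \pi_2(X)\otimes_\Z\K$ directly, which is the first assertion; I should double-check the Hurewicz identification carries the right $\Z[G]$-module structure, but that is standard.

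For the isomorphism \eqref{t77333}, I would simply substitute: $H^1(\pi_1(X);H^1(X,\partial X;\K[G]))\cong H^1(\pi_1(X);\pi_2(X)\otimes_\Z\K)$ is now a tautology once the module identification $H^1(X,\partial X;\K[G])\cong \pi_2(X)\otimes_\Z\K$ is shown to be $G$-equivariant, which it is since all the maps involved (the duality cap product with the fundamental class, Hurewicz) are natural and hence $\pi_1$-equivariant.

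For the exact sequence, I would use the long exact sequence of the pair $(X,\partial X)$ in the cohomology $H^*(-;\K[G])$, together with duality, to convert it into the homology long exact sequence
$$ H_2(X,\partial X;\K[G])\lra H_2(X;\K[G])\lra H_1(\partial X;\K[G])\lra H_1(X,\partial X;\K[G]).$$
Here $H_2(X,\partial X;\K[G])\cong H^1(X;\K[G])$ and $H_1(X,\partial X;\K[G])\cong H^2(X;\K[G])$ by duality, and I want to show the last map is zero, equivalently that $H^1(X;\K[G])\to H^1(\partial X;\K[G])$ is surjective onto... no — I want the connecting map $H_1(\partial X;\K[G])\to H_1(X,\partial X;\K[G])\cong H^2(X;\K[G])$ to vanish. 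This is where the hypothesis $\mathrm{Im}(\iota_*)\neq 0$ enters: if $\pi_1(\partial X)\to\pi_1(X)$ is nontrivial, then $\partial X$ has a component whose fundamental group maps nontrivially, and a Shapiro-type / Euler characteristic argument (as in the proof of Proposition \ref{Keypro699}, part 3) should force the relevant piece of $H^2(X;\K[G])$ to be controlled — alternatively, one shows $H^2(X;\K[G])\cong H_1(X,\partial X;\K[G])$ injects into something that kills the image. Granting that, the sequence collapses to
$$0\lra H_2(X;\K[G])\lra H_1(\partial X;\K[G])\lra 0\quad\text{no} $$
— rather, the four-term exactness gives $0\to \mathrm{coker}\to H_2(X;\K[G])\to H_1(\partial X;\K[G])\to 0$ with the cokernel term being $H^1(X;\K[G])$ modulo the image from $H^1(\partial X;\K[G])$; reconciling this with the stated sequence $0\to \pi_2(X)\otimes\K\to H^1(X;\K[G])\to H_1(\partial X;\K[G])\to 0$ means I actually want to run the long exact sequence with $H^1(X,\partial X;\K[G])\cong\pi_2(X)\otimes\K$ feeding in and use $H^0$-level vanishing at the tail. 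I would set it up as: $H^0(\partial X;\K[G])\to H^1(X,\partial X;\K[G])\to H^1(X;\K[G])\to H^1(\partial X;\K[G])\to H^2(X,\partial X;\K[G])$, identify $H^1(X,\partial X;\K[G])\cong\pi_2(X)\otimes\K$, show $H^0(\partial X;\K[G])\to \pi_2(X)\otimes\K$ is zero (the source is a sum of $\K[G/\pi_1(\partial X_j)]$'s and the map is restriction of a class vanishing on boundary — here again $\mathrm{Im}(\iota_*)\neq 0$ is used, perhaps via Proposition \ref{Keypro699}), and show $H^1(\partial X;\K[G])\hookrightarrow$ the next term so that $H^1(X;\K[G])\to H^1(\partial X;\K[G])$ is onto.

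The main obstacle is precisely pinning down where $\mathrm{Im}(\iota_*)\neq 0$ is used to kill the two boundary maps at the ends of the four-term sequence; I expect this to follow from the dimension-$2$ duality analysis of $\pi_1(\partial X)$ already recorded (each component gives a $\K$-$\mathrm{PD}_1$ or lower-dimensional piece) combined with the computation of $H_*(\partial X;\K[G])$ via Shapiro, exactly paralleling the bookkeeping in the proof of Proposition \ref{Keypro699}(3). Everything else is formal manipulation of long exact sequences and Poincaré--Lefschetz duality over $\K[G]$.
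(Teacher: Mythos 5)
Your treatment of the first claim and of \eqref{t77333} is correct and is exactly the paper's argument: duality gives $H^1(X,\partial X;\K[G])\cong H_2(X;\K[G])$, Shapiro identifies this with $H_2(\widetilde X;\K)$, and Hurewicz with $\pi_2(X)\otimes_{\Z}\K$. The genuine gap is in the exact sequence, where you explicitly leave open ``where $\mathrm{Im}(\iota_*)\neq 0$ is used to kill the two boundary maps.'' It is used for only one of them. The paper runs the homology long exact sequence of the pair,
$$H_2(\partial X;\K[G])\to H_2(X;\K[G])\to H_2(X,\partial X;\K[G])\to H_1(\partial X;\K[G])\to H_1(X;\K[G]),$$
which is the Lefschetz dual of the cohomology sequence you eventually set up. The left end vanishes because $H_2(\partial X;\K[G])\cong H^0(\partial X;\K[G])=\K[G]^{\mathrm{Im}(\iota_*)}$ by duality on the closed surface $\partial X$, and the invariants of $\K[G]$ under a nontrivial (infinite) image subgroup vanish --- this is the only place the hypothesis enters; no Euler-characteristic or extra Shapiro bookkeeping is needed. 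The right end vanishes for free: $H_1(X;\K[G])\cong H_1(\widetilde X;\K)=0$ because $\widetilde X$ is simply connected. Substituting $H_2(X;\K[G])\cong\pi_2(X)\otimes\K$ and $H_2(X,\partial X;\K[G])\cong H^1(X;\K[G])$ yields the stated sequence, whose third term is the homology group $H_1(\partial X;\K[G])$ as written; your cohomological version would instead produce $H^1(\partial X;\K[G])$ and would need one further application of duality on $\partial X$ to match.

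Two concrete slips should also be repaired. First, your initial display $H_2(X,\partial X;\K[G])\to H_2(X;\K[G])\to H_1(\partial X;\K[G])\to H_1(X,\partial X;\K[G])$ is not a segment of the long exact sequence of the pair. Second, the sentence ``show $H^1(\partial X;\K[G])\hookrightarrow$ the next term so that $H^1(X;\K[G])\to H^1(\partial X;\K[G])$ is onto'' is backwards: by exactness, injectivity of the connecting map would force the previous map to have image \emph{zero}. What you need is that the connecting map is zero, which holds simply because its target $H^2(X,\partial X;\K[G])\cong H_1(X;\K[G])\cong H_1(\widetilde X;\K)$ vanishes.
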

\begin{proof}
Let $\widetilde{X}$ be the universal cover of $X$. Then, we immediately prove the former claim by
$$H^1( X, \partial X; \K [G]) \cong H_2( X; \K [G]) \cong H_2( \widetilde{X} ;\K) \cong \pi_2(X) \otimes \K. $$
Here, the three isomorphisms are obtained by invoking duality, Shapiro lemma, and Hurewicz theorem, respectively. Next, consider the homology long exact sequence,
\begin{equation}\label{kkl6} H_2(\partial X; \K [G] ) \ra H_2( X; \K [G] ) \ra H_2( X, \partial X; \K [G] ) \ra H_1(\partial X; \K [G] ) \ra H_1( X; \K [G] ) .\end{equation}
By duality, the first term is $H^0(\partial X; \K [G] )$, which is zero by assumption, and the last term vanishes because of $ H_1( \widetilde{X}; \K) =0.$ Hence, the required sequence is nothing but the dual of \eqref{kkl6}.
\end{proof}
Concerning Fox pairings, we shall focus on the cohomology in \eqref{t77333}. Let $ P_n X \ra P_{n-1}X \ra \cdots \ra P_0X$ be the Postnikov tower of $X$ (see, e.g., \cite[Section 4.3]{Mcc} for the definition). The cohomology is estimated as follows:
\begin{prop}\label{kso04w}
In the above notation, there is an exact sequence, 
\begin{equation}\label{kkl5} 0 \ra H^1(\pi_1(X);\pi_2(X) \otimes \K ) \ra H^4(\pi_1(X);\K)\ra H^4( P_2(X);\K),
\end{equation}
and there is a surjection from the invariant part $ (\pi_3(X) \otimes \K)^{\pi_1(X)}$ to the last term $ H^4( P_2(X);\K)$.
\end{prop}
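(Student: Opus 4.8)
The plan is to extract both assertions from the Serre spectral sequences of the bottom two stages of the Postnikov tower of $X$, using throughout that a compact $3$-manifold with non-empty boundary is homotopy equivalent to a $2$-complex, so $H^i(X;M)=0$ for every $i\ge 3$ and every coefficient module $M$; and that $\pi_2(X)=H_2(\widetilde X;\Z)$ is free abelian, since $\widetilde X$ is a simply connected $3$-manifold with boundary and hence also homotopy equivalent to a $2$-complex. Freeness of $\pi_2(X)$ kills the $\Ext$ and $\Tor$ terms in the universal-coefficient identifications below.

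First I would prove the surjection, obtaining along the way the vanishing $H^3(P_2(X);\K)=0$ that the exact sequence will need. Let $F$ be the homotopy fiber of $X\to P_2(X)$. Since this map is an isomorphism on $\pi_1$ and $\pi_2$ and $\pi_3(P_2(X))=0$, the homotopy exact sequence shows that $F$ is $2$-connected with $\pi_3(F)\cong\pi_3(X)$ as $\pi_1(X)$-modules, so $H^q(F;\K)=0$ for $0<q\le 2$ and $H^3(F;\K)\cong\Hom_\Z(\pi_3(X),\K)$ (the $\Ext$ term vanishes since $H_2(F)=0$). In the $\K$-cohomology Serre spectral sequence of $F\to X\to P_2(X)$ — with local coefficients the $\pi_1(X)$-module $H^{\ast}(F;\K)$ — rows $q=1,2$ vanish, so no differential meets $E_2^{3,0}=H^3(P_2(X);\K)$, whence this is a subquotient of $H^3(X;\K)=0$ and vanishes; and the only differential into $E_2^{4,0}=H^4(P_2(X);\K)$ is $d_4$ from $E_2^{0,3}=H^3(F;\K)^{\pi_1(X)}$, so $\Coker d_4=E_\infty^{4,0}$ is a subquotient of $H^4(X;\K)=0$, i.e. $d_4\colon H^3(F;\K)^{\pi_1(X)}\twoheadrightarrow H^4(P_2(X);\K)$. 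After the standard identification of $H^3(F;\K)^{\pi_1(X)}=\Hom_{\K[\pi_1(X)]}(\pi_3(X)\otimes_\Z\K,\K)$ with $(\pi_3(X)\otimes_\Z\K)^{\pi_1(X)}$ this is the asserted surjection. (Equivalently: $P_2(X)$ is built from $X$ by attaching cells of dimension $\ge 4$, whence $H^3(P_2(X);\K)\hookrightarrow H^3(X;\K)=0$ and $H^4(P_2(X);\K)\cong H^4(P_2(X),X;\K)$, which relative Hurewicz and $\pi_4(P_2(X),X)\cong\pi_3(X)$ identify with $\Hom_{\pi_1(X)}(\pi_3(X),\K)$.)

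Next I would derive \eqref{kkl5} from the $\K$-cohomology Serre spectral sequence of the Postnikov fibration $K(\pi_2(X),2)\to P_2(X)\to P_1(X)=K(\pi_1(X),1)$, with local coefficients from the $\pi_1(X)$-action on $\pi_2(X)$. Here $H^1(K(\pi_2(X),2);\K)=0$, and, because $\pi_2(X)$ is free abelian, $H^3(K(\pi_2(X),2);\K)=\Ext_\Z(\pi_2(X),\K)=0$, while $H^2(K(\pi_2(X),2);\K)$ is identified with the $\pi_1(X)$-module $\pi_2(X)\otimes_\Z\K\cong H^1(X,\partial X;\K[G])$ of Proposition \ref{ksoiw} and \eqref{t77333}; thus the $E_2$-page is concentrated in rows $q=0,2,4,\dots$. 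The only differential landing in $E_2^{4,0}=H^4(\pi_1(X);\K)$ is $d_3\colon E_3^{1,2}=H^1(\pi_1(X);\pi_2(X)\otimes\K)\to H^4(\pi_1(X);\K)$, and its kernel is $E_\infty^{1,2}$, a subquotient of $H^3(P_2(X);\K)=0$ by the previous step; hence $d_3$ is injective. Since the edge homomorphism $H^4(\pi_1(X);\K)\to H^4(P_2(X);\K)$ has image $E_\infty^{4,0}=\Coker d_3$ and kernel $\Im d_3$, the sequence $0\to H^1(\pi_1(X);\pi_2(X)\otimes\K)\xrightarrow{\,d_3\,}H^4(\pi_1(X);\K)\to H^4(P_2(X);\K)$ is exact, which is \eqref{kkl5}.

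The main obstacle is the logical dependence: making $d_3$ injective in the second step requires $H^3(P_2(X);\K)=0$, and that, together with the surjection onto $H^4(P_2(X);\K)$, is delivered only by the first step — i.e. by analyzing the homotopy fiber of $X\to P_2(X)$ and using that $X$ is at most $2$-dimensional up to homotopy. The remaining subtleties are bookkeeping rather than conceptual: carrying the $\pi_1(X)$-module structures on $\pi_2(X)$, $\pi_3(X)$ and on the fiber cohomologies through both spectral sequences, and performing the universal-coefficient identifications $H^2(K(\pi_2,2);\K)\cong\pi_2(X)\otimes\K$ and $H^3(F;\K)^{\pi_1}\cong(\pi_3(X)\otimes\K)^{\pi_1}$, where the freeness of $\pi_2(X)$ is used.
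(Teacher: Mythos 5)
Your proof is correct and establishes both claims. It uses the same underlying method as the paper --- Serre spectral sequences of the Postnikov stages of $X$ --- but with a different decomposition of the tower: the paper combines the three exact sequences \eqref{kkl600}, \eqref{kkl8}, \eqref{kkl9} coming from $X\to P_3(X)\to P_2(X)\to P_1(X)$, obtaining the surjection onto $H^4(P_2(X);\K)$ from the stage $P_3(X)\to P_2(X)$ together with the vanishing of $H^4(P_3(X);\K)$, whereas you run the spectral sequence of $X\to P_2(X)$ directly, whose $2$-connected fiber $F$ satisfies $\pi_3(F)\cong\pi_3(X)$, and read off both $H^3(P_2(X);\K)=0$ and the transgression surjection $d_4\colon H^3(F;\K)^{\pi_1(X)}\twoheadrightarrow H^4(P_2(X);\K)$ in a single pass. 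Your route is more self-contained: it makes explicit the inputs the paper leaves implicit, namely $H^3(X;\K)=H^4(X;\K)=0$ (from the $2$-dimensional spine of a compact $3$-manifold with non-empty boundary) and the freeness of $\pi_2(X)$ and $\pi_3(X)$, which kills the $\Ext$ terms in the fiber cohomologies. The only caveat --- shared with the paper, whose statement is phrased the same way --- is the identification of $H^2(K(\pi_2(X),2);\K)$ and $H^3(F;\K)$ with $\pi_2(X)\otimes\K$ and $\pi_3(X)\otimes\K$ rather than with $\Hom_{\Z}(\pi_2(X),\K)$ and $\Hom_{\Z}(\pi_3(X),\K)$; when these free abelian groups have infinite rank the two differ, a point the paper itself only alludes to in its closing remark on infinitely generated $\pi_2(X)$.
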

\begin{proof} 
By the Leary-Serre cohomology spectral sequence of the Postnikov tower (see \cite[Lemma $8^{bis}.27$]{Mcc} for the details of the homological one), we have the exact sequences,
\begin{equation}\label{kkl600}H^3(P_2(X);\K)\ra H^1(\pi_1(X);\pi_2(X) \otimes \K ) \ra H^4(\pi_1(X);\K)\ra H^4( P_2(X);\K) ,\end{equation}
\begin{equation}\label{kkl8} 0 \ra H^{n}(P_{n-1}(X);\K) \ra H^n(X;\K) \ra (\pi_n(X) \otimes \K)^{\pi_1(X)},
\end{equation}
\begin{equation}\label{kkl9} (\pi_3(X) \otimes \K)^{\pi_1(X)} \ra H^4( P_2(X);\K) \ra H^4( P_3(X);\K) .\end{equation}
Then, the desired \eqref{kkl5} is readily due to \eqref{kkl600} and \eqref{kkl8}. Furthermore, the required surjection arises from \eqref{kkl9} and \eqref{kkl8} with $n=3$.
\end{proof}
To summarize, to compute the cohomology $ H^1(X, \partial X; H^1( X; \K [G]) )$, it is important to compute $H^4(\pi_1(X);\K)$ and the homotopy groups $\pi_*(X)$. Similarly, the other cohomology groups,
$$H^1(\pi_1(X), \pi_1(\partial X); H^1( X; \K [G]) ) , \ \ \ \mathrm{and} \ \ \ \ H^1(\pi_1(X); H^1( X; \K [G]) ) $$
can be estimated from spectral sequences. However, by the loop theorem and the sphere theorem, most 3-manifolds with infinite fundamental groups are aspherical and satisfy the injectivity of $\iota_*: \pi_1(\partial X ) \ra \pi_1(X ) $. Furthermore, we should mention that in cases where $\pi_2(X)$ is infinitely generated, it might be hard to find non-trivial Fox pairings using the above propositions. 

\section{Duality of commutator subgroups of knot groups }
\label{kkkk}
Some Fox pairings of the commutator subgroups of knot groups are discussed in \cite[Theorem E]{Tur2} and \cite[Appendix 3]{Tur}. Here, we make a conjecture (Problem \ref{Thm69933}) and state and prove Proposition \ref{3dere44hm5} that relates to this conjecture.

Let $ K$ be a knot in an integral homology 3-sphere $\Sigma$. Since $H_1( \Sigma \setminus K ;\Z) \cong \Z$, we have an infinite cyclic cover, $\tilde{E}_K ,$ of $\Sigma \setminus K$. If $\Sigma \setminus K $ is irreducible, the knot group pair $( \pi_1( \Sigma \setminus K), \Z \times \Z)$ is known to be a $\mathrm{PD}_3$-pair; thus, by Proposition \ref{Keypro699}, the Fox pairings of $\pi_1( \Sigma \setminus K)$ are trivial. In contrast, we will focus on the commutator subgroup $[ \pi_1( \Sigma \setminus K), \pi_1( \Sigma \setminus K) ]=\pi_1(\tilde{E}_K)$ and pose a question.

\begin{prob}\label{Thm69933}
Let $G$ be $\pi_1( \tilde{E}_K )$, and $\sh $ be the subgroup $ \pi_1(\partial \tilde{E}_K ) $. Then, is the pair ($G,\sh$) a $\Q$-$\mathrm{PD}_2$ pair? 
\end{prob}
The duality theorem only with trivial coefficients $\Q$ is known to the Milnor pairing \cite{Mil}. However, if the problem is positively solved, we can discuss a generalization of \cite[Theorem I ]{Tur} and \cite[Theorem E]{Tur2} as follows.
\begin{prop}\label{3dere44hm5}
Let $G$ be $\pi_1( \tilde{E}_K )$, and $\sh$ be the subgroup $ \pi_1( \partial \tilde{E}_K )\cong \langle \zeta \rangle $. Suppose that the pair ($G,\sh$) is a $\Q$-$\mathrm{PD}_2$ pair like Problem \ref{Thm69933}.

Then, there exists uniquely a Fox pairing $ \eta : \Q[G] \otimes \Q[G] \ra \Q[G]$ satisfying
\begin{equation}\label{t588888} \eta( \zeta , g)= 1-g \ \ \mathrm{for \ any \ } g \in G. \end{equation}
Furthermore, if $\pi_1( \tilde{E}_K ) $ is residually nilpotent, then
\begin{equation}\label{t457} \eta( g,h)+ \eta^t ( g,h )= (1-g)(1-h) \ \ \mathrm{for \ any \ } g ,h\in G . \end{equation}
\end{prop}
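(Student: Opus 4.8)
The plan is to follow the two-step pattern of Theorem \ref{dere35} and Corollary \ref{dere3335}, then add a symmetry argument for the residually nilpotent case. Since $(G,\sh)$ is assumed to be a $\Q$-$\mathrm{PD}_2$ pair with $\sh \cong \Z$ and $\Q$ a field, Theorem \ref{dere35} applies verbatim: there exists a Fox pairing $\eta$ representing a basis of $H^1(G_2,\sh;H^1(G_1;\Q[G])) \cong \Q$ with $\eta(\zeta,g)=1-g$ for all $g\in G$. For uniqueness, suppose $\eta'$ also satisfies \eqref{t588888}; then $\eta-\eta'$ vanishes on $\{\zeta\}\times G$, so the transpose $(\eta-\eta')^t$ lies in $Z^1(G_2;Z^1(G_1,\sh;\Q[G]))$, which is zero by Lemma \ref{Keylem4434} (here I use that $\sh\neq\emptyset$ so $H^1(G,\sh;\Q[G])=0$ by Proposition \ref{Keypro699}(1) and $B^1(G,\sh;\Q[G])=0$ by definition). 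Hence $\eta=\eta'$.

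For the symmetry relation \eqref{t457}, the key observation is that the map $\eta^t$ is itself a Fox pairing (Example \ref{deriex}), and the inner Fox pairing $(a,b)\mapsto (\aug(a)-a)(\aug(b)-b)$ sends $(g,h)$ to $(1-g)(1-h)$. So I would consider the difference $\theta := \eta + \eta^t - \eta_{\mathrm{inner}}$, where $\eta_{\mathrm{inner}}(g,h)=(1-g)(1-h)$, and aim to show $\theta=0$. First compute $\theta(\zeta,g)$: one has $\eta(\zeta,g)=1-g$, $\eta_{\mathrm{inner}}(\zeta,g)=(1-\zeta)(1-g)$, and $\eta^t(\zeta,g)= \zeta\,\overline{\eta(g^{-1},\zeta^{-1})}$. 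Using $\eta(g^{-1},\zeta^{-1}) = -\eta(g^{-1},\zeta)\zeta^{-1}$ (from property \eqref{t2} applied to $\zeta\cdot\zeta^{-1}=1$) and the value $\eta(g^{-1},\zeta)$, which one extracts by combining \eqref{t588888} with the cocycle identities, a direct computation should give $\theta(\zeta,g)=0$ for all $g$. Granting this, $\theta$ vanishes on $\{\zeta\}\times G$, so $\theta^t \in Z^1(G_2;Z^1(G_1,\sh;\Q[G]))$; but this group is zero by Lemma \ref{Keylem4434}, hence $\theta$ is zero on the nose — \emph{except} that $\eta_{\mathrm{inner}}$ need not be null-cohomologous in general, so I cannot conclude $\theta=0$ from $\theta^t=0$ unless I know $\theta$ actually lies in $B^1$, or more simply unless $\theta=0$ identically.

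This is where residual nilpotence enters, and it is the main obstacle. Without it, $\eta+\eta^t$ and $\eta_{\mathrm{inner}}$ could differ by a nonzero null-cohomologous Fox pairing (recall from Theorem \ref{Thm6} that there are infinitely many non-nullcohomologous pairings, but also a large space of coboundaries). The residual nilpotence hypothesis should be used to rule this out via a separation argument: if $\pi_1(\tilde E_K)$ is residually nilpotent, the Magnus-type embedding of $\Q[G]$ into its pro-nilpotent completion (equivalently, the intersection of the powers of the augmentation ideal is zero) lets one check the identity \eqref{t457} on associated graded pieces, where the Fox pairing is controlled by derivations into $\Q[G]$ and the symmetry $\eta+\eta^t = \eta_{\mathrm{inner}}$ can be verified degree by degree — this is precisely the mechanism behind \cite[Theorem E]{Tur2} and Turaev's computation in \cite[Appendix 3]{Tur}. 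So the real plan for \eqref{t457} is: (i) reduce, as above, to showing $\eta+\eta^t-\eta_{\mathrm{inner}}$ vanishes on $\{\zeta\}\times G$, which is a finite cocycle computation; (ii) observe $\theta:=\eta+\eta^t-\eta_{\mathrm{inner}}$ then has $\theta^t\in Z^1(G_2;Z^1(G_1,\sh;\Q[G]))$, so $\theta$ is null-cohomologous and in fact, being a transpose of something in a zero group, $\theta=0$ provided $\theta$ lies in the relative cocycle group — and (iii) use residual nilpotence exactly to promote "$\theta$ null-cohomologous and vanishing on $\sh\times G$" to "$\theta=0$", since over the graded ring the only Fox pairing vanishing on $\sh$ and null-cohomologous is trivial. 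I expect step (iii) — correctly invoking residual nilpotence to kill the coboundary ambiguity — to be the delicate point; steps (i) and (ii) are bookkeeping with \eqref{t1}, \eqref{t2}, the definition of $\eta^t$, and Lemma \ref{Keylem4434}.
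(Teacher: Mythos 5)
Your treatment of the first half (existence and uniqueness of $\eta$ with $\eta(\zeta,g)=1-g$) is exactly the paper's: both reduce to Theorems \ref{Thm6} and \ref{dere35} together with Lemma \ref{Keylem4434}, and that part is fine.

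The second half has a genuine gap, and it sits in your step (i), not where you located it. To show $\theta(\zeta,g)=0$ for $\theta=\eta+\eta^t-\eta_{\mathrm{inner}}$ you need the value $\eta^t(\zeta,g)=\zeta\,\overline{\eta(g^{-1},\zeta^{-1})}\,g$, i.e.\ you need $\eta(g^{-1},\zeta)$ with $\zeta$ in the \emph{second} slot. The normalization \eqref{t588888} only prescribes $\eta(\zeta,\cdot)$, and the Fox axioms \eqref{t1}--\eqref{t2} are derivation rules in each slot separately; they never transfer information from $\eta(\zeta,\cdot)$ to $\eta(\cdot,\zeta)$. So $\eta(g^{-1},\zeta)$ is simply not computable from the given data, and the ``finite cocycle computation'' you defer to cannot be carried out. (In fact, determining $\eta(g,\zeta)$ is essentially equivalent to the symmetry you are trying to prove.) Note also that your worry in step (ii) is unfounded: if step (i) \emph{could} be done, then $\theta$ vanishes on $\{\zeta\}\times G$, so for each fixed $a$ the derivation $h\mapsto\theta^t(a,h)$ kills $\zeta$ and is zero by Lemma \ref{Keylem4434}, whence $\theta^t=0$ and hence $\theta=0$ identically (the transpose is injective on pairings) --- with no use of residual nilpotence at all. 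The fact that the hypothesis of residual nilpotence is genuinely needed is itself a signal that step (i) is not available.

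The paper's actual route is much shorter and you should compare it with your sketch of step (iii): Turaev's \cite[Theorem E]{Tur2} already produces the unique Fox pairing $\hat\eta:\Q[G]\times\Q[G]\ra\widehat{\Q}[\widehat{G}]$ satisfying both identities in \eqref{t587} with values in the completion. Since $\pi_1(\Sigma\setminus K)$, hence $G$, is torsion free, residual nilpotence gives that $G\ra\widehat{G}$ and therefore $\Q[G]\hookrightarrow\Q[\widehat{G}]\hookrightarrow\widehat{\Q}[\widehat{G}]$ are injective; the composite of your $\eta$ with this embedding agrees with $\hat\eta$ by uniqueness, so the identity $\hat\eta+\hat\eta^t=(1-g)(1-h)$ pulls back along the injection to give \eqref{t457} in $\Q[G]$ itself. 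Your proposal gestures at exactly this mechanism (``Magnus-type embedding, degree by degree'') but never executes it; as written, the proof of \eqref{t457} is incomplete.
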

We give a comparison with existing results. If $M$ is replaced by a completed module of $\K[G] $, the same statement in the knot case is also shown in \cite[Theorem E]{Tur2}, and is not used by homology algebra. Here, the point is that our statement discusses before completing the module $\K[G] $.

Finally, we will give the proof of Proposition \ref{3dere44hm5}. For a group $\mathcal{G} $, let $ I \subset \K[\mathcal{G} ]$ be the augmentation ideal. Consider the inverse limit $\widehat{\K}[\mathcal{G} ]:= \lim_n \K[\mathcal{G} ]/ I^n.$ As is classically known \cite[Appendix A.]{Qui}, this $\widehat{\Q}[\mathcal{G} ]$ is isomorphic to the group ring $\widehat{\Q}[\widehat{\mathcal{G} } ] $, where $\widehat{\mathcal{G} }$ the Malcev completion of $\mathcal{G} $, that is, $\widehat{\mathcal{G} }= \lim \mathcal{G} /\mathcal{G} _n \otimes \Q $, where $\mathcal{G} \supset \mathcal{G}_2 \supset \mathcal{G}_3 \supset \cdots$ is the lower central series of $\mathcal{G}$, and $ \mathcal{G} /\mathcal{G} _n \otimes \Q $ is the rationalization of $ \mathcal{G} /\mathcal{G} _n $. Note that the canonical map $\mathcal{G} \ra\widehat{\mathcal{G} } $ is injective if and only if $\mathcal{G}$ is torsion free and residually nilpotent. If so, the induced map $\Q[\mathcal{G} ] \ra \Q[\widehat{\mathcal{G} }] $ is injective. In addition, Turaev shows \cite[Theorem E]{Tur2} that, if $G = \pi_1(\tilde{E}_K )$, there is a unique Fox pairing $ \hat{\eta}: \Q[G] \times \Q [G] \ra \widehat{\Q }[ \widehat{G}] $ satisfying
\begin{equation}\label{t587} \hat{\eta}( \zeta , g)= 1-g, \ \ \ \ \hat{\eta}( g,h)+ \hat{\eta}^t ( g,h )= (1-g)(1-h) \ \ \mathrm{for \ any \ } g ,h\in G . \end{equation}
\begin{proof}
[Proof of Proposition \ref{3dere44hm5}] The former statement is immediately obtained from Theorems \ref{Thm6} and \ref{dere35}. To prove the latter one, notice that the knot group $\pi_1( \Sigma \setminus K)$ is torsion free. Thus, so is $G$. Therefore, we have injections $\Q[G] \hookrightarrow \Q[\widehat{G } ]\hookrightarrow \widehat{\Q} [\widehat{G } ]$. Hence, \eqref{t587} implies \eqref{t457}.
\end{proof}

\section{Higher Fox pairings of Poincar\'{e} duality pairs }
\label{Se996}
Now let us introduce higher Fox pairings and study the cohomology of higher Fox pairings of Poincar\'{e} duality pairs. We fix integers $n,m\in \mathbb{N}$ and a $\K[G]$-bimodule $M$.

For a $\K$-bilinear map $\eta: \K[G]^{\otimes m} \times \K[G]^{\otimes n} \ra M $ and $ a \in \K[G]^{\otimes m} , b \in \K[G]^{\otimes n} $, we can define a right evaluation $ \eta_{b}: \K[G]^{\otimes m} \ra M $ that sends $ c $ to $\eta (c,b)$ and a left evaluation $_a \eta : \K[G]^{\otimes n} \ra M $ that sends $d $ to $\eta (a,d)$. The maps $_a \eta$ and $\eta_b $ can be regarded as maps $G^m \ra M$ and $G^n \ra M$, respectively.
\begin{defn}\label{def6th4}
Let $\mathcal{S}$ and $\mathcal{S}'$ be finite families of subgroups of a group $G$.

A $\K$-bilinear map $\eta: \K[G]^{\otimes m} \times \K[G]^{\otimes n} \ra M $ is a {\it (higher) Fox pairing (of type ($m,n$))} if the right evaluation $ \eta_{b}: G^m \ra M $ is an $m$-cocycle in $C^m_r(G,\mathcal{S};M)$ and the left evaluation $_a \eta: G^n \ra M $ is an $n$-cocycle in $C^n_l(G,\mathcal{S}';M)$ for any $ a \in \K[G]^{\otimes m} , b \in \K[G]^{\otimes n} $.

We denote the module of all such higher Fox pairings by $\mathcal{HFP}_{m,n}(G,\sh,\sh ';M).$

\end{defn}
If $m=n=1$, this definition is obviously the (original) Fox pairing. Take two copies, $G_1, G_2$, of $G$. For a cocycle $f \in Z^m_l(G_2 ; Z^n_r(G_1 ; M))$, we define a map $ \eta_{f}: (G_2)^m \times (G_1)^n \ra M $ by $ \eta_{f}(g,h):= (f(g))(h)$, which bilinearly extends to $ \K[G]^{\otimes m } \times \K[G]^{\otimes n } \ra M $ as a higher Fox pairing. In the same way as Proposition \ref{Keypro}, we can easily prove the following.
\begin{prop}\label{Keypro2}
Let $M$ be a $\K[G]$-bimodule. The correspondence $f \mapsto \eta_f$ gives rise to a $\K$-module isomorphism,
$$ Z^m_l (G_2,\sh ; Z^n_r(G_1,\sh'; M)) \cong \mathcal{HFP}_{m,n}(G,\sh,\sh ';M) . $$
\end{prop}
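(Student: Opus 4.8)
The plan is to mimic the proof of Proposition \ref{Keypro} verbatim, since the only change is replacing the single-variable group $G$ with its $m$-th (resp. $n$-th) Cartesian power in the relevant cochain groups. First I would reduce to the case $\sh=\sh'=\emptyset$ exactly as before: by multilinearity, a higher Fox pairing $\eta:\K[G]^{\otimes m}\times\K[G]^{\otimes n}\ra M$ is determined by its restriction to $G^m\times G^n$, so there is a canonical bijection between $\mathcal{HFP}_{m,n}(G;M)$ and the set of maps $G^m\times G^n\ra M$ whose right evaluations $\eta_b:G^m\ra M$ are $m$-cocycles in $C^m_r(G_1;M)$ and whose left evaluations $_a\eta:G^n\ra M$ are $n$-cocycles in $C^n_l(G_2;M)$, where here $a,b$ range only over basis elements (pure tensors of group elements), the general case following by linearity.

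Next I would set up the adjunction-type bijections on the cochain side. Regarding $C^n_r(G_1;M)=\Hom_{\K[G_1]}(F_n(G_1),M)$ as the set of maps $G_1^n\ra M$, and using that the left $G$-action on $M$ makes $C^n_r(G_1;M)$ a left $\K[G_2]$-module, we get
\[
C^m_l(G_2;C^n_r(G_1;M))\;\longleftrightarrow\;\mathrm{Map}(G_2^m,\mathrm{Map}(G_1^n,M))\;\longleftrightarrow\;\mathrm{Map}(G_2^m\times G_1^n,M).
\]
Under this composite, the cocycle condition $\delta^n_r f(g)=0$ for every $g\in G_2^m$ says precisely that each right evaluation is an $n$-cocycle, and — using that the $\K[G_2]$-module structure on $C^n_r(G_1;M)$ is the one induced from the left $G$-action on $M$ — the cocycle condition $\delta^m_l f=0$ in $C^{m+1}_l(G_2;C^n_r(G_1;M))$ unwinds to say that each left evaluation $_a\eta$ is an $m$-cocycle in $C^m_l(G_2;M)$. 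Hence the restriction of the above composite bijection to $Z^m_l(G_2;Z^n_r(G_1;M))$ lands exactly in $\mathcal{HFP}_{m,n}(G;M)$ and is a $\K$-module isomorphism; chasing the definitions shows it coincides with $f\mapsto\eta_f$. Finally, for nonempty $\sh,\sh'$ I would observe that $\eta_f(l,g)=0$ for all $l\in S_i^m$ (all $i$) is exactly the condition that $f$ take values in the relative subcomplex $C^n_r(G_1,\sh';M)$, and likewise $\eta_f(g,l')=0$ for $l'\in S_j^n$ corresponds to $f\in C^m_l(G_2,\sh;-)$; thus the isomorphism restricts to the stated relative version.

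The only genuine subtlety — what I expect to be the main obstacle — is bookkeeping the two module structures on $C^n_r(G_1;M)$ simultaneously: $M$ is a $\K[G]$-bimodule, the complex $C^n_r(G_1;M)$ is built using the \emph{right} $\K[G_1]$-action, and then the \emph{left} $\K[G_2]$-action on it (needed to form $C^m_l(G_2;-)$) comes from the left $G$-action on $M$ only, not interfering with the differential $\partial^r_{n+1}$. One must check that $\partial^r$ is a map of left $\K[G_2]$-modules so that $C^\bullet_l(G_2;Z^n_r(G_1;M))$ makes sense, and that after the adjunction the two differentials $\delta^m_l$ and $\delta^n_r$ act on independent sets of variables; once this is pinned down the verification that the $m$-cocycle condition in the outer complex translates to ``all left evaluations are $m$-cocycles'' is routine. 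Since all of this is entirely parallel to the $m=n=1$ case already proved in Proposition \ref{Keypro}, the proof amounts to remarking that every step there used nothing about the arity, so I would simply write ``the proof is identical to that of Proposition \ref{Keypro}, replacing $F_1$ by $F_m$ and $F_n$ and the one-cocycle conditions by $m$- and $n$-cocycle conditions throughout.''
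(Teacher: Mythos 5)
Your proposal is correct and is exactly the argument the paper intends: its entire proof of Proposition \ref{Keypro2} is the sentence ``In the same way as Proposition \ref{Keypro}, we can easily prove the following,'' and your currying bijection $C^m_l(G_2;C^n_r(G_1;M))\leftrightarrow\mathrm{Map}(G_2^m\times G_1^n,M)$, the observation that $\delta^m_l$ and $\delta^n_r$ act on independent blocks of variables (which requires the left $G_2$-action to commute with $\partial^r$, as you note), and the restriction to the relative subcomplexes reproduce that argument in arity $(m,n)$. The only caveat is the left/right bookkeeping you worried about: Definition \ref{def6th4} as printed puts $\eta_b$ in $C^m_r$ and ${}_a\eta$ in $C^n_l$, which is the opposite of what Proposition \ref{Keypro2} and the $m=n=1$ case require, so the crossed labels in your write-up are inherited from an inconsistency in the paper rather than a gap in your proof.
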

Furthermore, as in the proof of Theorem \ref{Thm6}, we can easily make a generalization as follows:
\begin{thm}\label{Thm6ht}
Let $(G,\sh)$ be a $\K$-$\mathrm{PD}_n$ pair with $\sh \neq \emptyset$. Then, there is an isomorphism,
$$H^{n-1}(G_2,\sh ; H^{n-1}(G_1; \K[G])) \cong H_{2}(G,\sh;\K). $$ 
\end{thm}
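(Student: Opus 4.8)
The plan is to mimic, degree by degree, the computation that produced Theorem \ref{Thm6}, replacing the single application of duality by the general $\K$-$\mathrm{PD}_n$ isomorphism. By Proposition \ref{Keypro6} the relative cohomology $H^i(G,\sh;\K[G])$ can be controlled via duality and the Shapiro lemma; in particular $H^{n-1}(G_1;\K[G])\cong H_1(G,\sh;\K[G])\cong H_0(\sh;\K[G])/\mathrm{inc}_*$, and in the same spirit as Proposition \ref{Keypro699}(3) one identifies this with $M:=\Ker(\aug\colon\oplus_{k=1}^m \K[G/S_k]\to\K)$ as a left $\K[G]$-module (the case $n=2$ is literally \eqref{lll554}; for general $n$ one uses $H_j(G;\K[G])=0$ for $j\geq 1$ together with the long exact sequence of the pair). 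So the first step is: \emph{identify the inner coefficient module,} $H^{n-1}(G_1;\K[G])\cong M$.

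Second, I would compute $H^{n-1}(G_2,\sh;M)$. Apply the relative $\K$-$\mathrm{PD}_n$ duality $e\cap\bullet\colon H^{n-1}(G,\sh;M)\xrightarrow{\ \cong\ } H_1(G;M)$. Then I would run the long exact homology sequence of the short exact coefficient sequence $0\to M\to \oplus_k \K[G/S_k]\to\K\to 0$ exactly as in the proof of Theorem \ref{Thm6}: using $H_i(G;\K[G/S_k])\cong H_i(S_k;\K)$ by Shapiro, this collapses to the sequence \eqref{lll45i}, and $H_1(G;M)$ is pinned down as the kernel of $\mathrm{aug}_*\colon \oplus_k H_1(S_k;\K)\to H_1(G;\K)$. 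Finally, the long exact sequence of the pair $(G,\sh)$ together with $H_2(G;\K)\to H_2(G,\sh;\K)\to\oplus_k H_1(S_k;\K)\to H_1(G;\K)$ and the duality $H_2(G,\sh;\K)\cong H^{n-2}(G;\K)$ identifies that kernel with $H_2(G,\sh;\K)$. Combining with Proposition \ref{Keypro2} (which says $H^{n-1}(G_2,\sh;H^{n-1}(G_1;\K[G]))$ is exactly the relevant cohomology of higher Fox pairings) gives the claimed isomorphism $H^{n-1}(G_2,\sh;H^{n-1}(G_1;\K[G]))\cong H_2(G,\sh;\K)$.

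The main obstacle is making sure the reduction of the inner module works \emph{in higher degree}, i.e.\ that the chain of duality and Shapiro isomorphisms really yields $H^{n-1}(G_1;\K[G])\cong M$ as a left $\K[G]$-module for all $n$, not just $n=2$; this needs $H_j(G;\K[G])=0$ for $1\le j\le n-1$, which follows from $\K$-$\mathrm{PD}_n$ duality applied with trivial coefficients and $H^j(G,\sh;\K[G])\cong H_{n-j}(G;\K[G])\cong H_{n-j}(\mathrm{pt};\K)$, but one must be careful that the intermediate homology $H_{n-2}(\sh;\K[G])$ vanishes (again Shapiro) so that the long exact sequence does collapse to an isomorphism in the right spot. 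Once the inner module is correctly identified, the rest is a verbatim repetition of the $n=2$ argument of Section \ref{SS2346}, with $H_1$ replaced by $H_{n-1}$ and the single duality step replaced by the general $\mathrm{PD}_n$ one.
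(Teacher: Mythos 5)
Your overall strategy---first identify the inner coefficient module $H^{n-1}(G_1;\K[G])$ with $M=\Ker(\aug:\oplus_{k}\K[G/S_k]\ra\K)$ via duality and the long exact sequence of the pair, then compute $H^{n-1}(G_2,\sh;M)\cong H_1(G;M)$ by duality and the coefficient sequence $0\ra M\ra\oplus_k\K[G/S_k]\ra\K\ra0$---is exactly the generalization of the proof of Theorem \ref{Thm6} that the paper intends (the paper offers no separate argument beyond ``as in the proof of Theorem \ref{Thm6}''). Your first step is correct, up to two slips of wording: $H_1(G,\sh;\K[G])$ is the \emph{kernel} of $\mathrm{inc}_*$ on $H_0(\sh;\K[G])$, not a quotient, and the vanishing $H_j(G;\K[G])=0$ for $j\geq1$ is the Shapiro lemma alone, with no duality needed.

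The genuine gap is in your last step. You assert that $H_1(G;M)$ ``is pinned down as the kernel of $\aug_*:\oplus_kH_1(S_k;\K)\ra H_1(G;\K)$'' and that the pair sequence ``identifies that kernel with $H_2(G,\sh;\K)$''. Both assertions rest on the vanishing of $H_2(G;\K)$: the coefficient sequence actually gives an exact sequence
\begin{equation*}
H_2(\sh;\K)\lra H_2(G;\K)\lra H_1(G;M)\lra \Ker(\aug_*)\lra 0,
\end{equation*}
and likewise $H_2(G,\sh;\K)$ is an extension of $\Ker(\aug_*)$ by $\Coker\bigl(H_2(\sh;\K)\ra H_2(G;\K)\bigr)$. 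For $n=2$ with $\sh\neq\emptyset$ one has $H_2(G;\K)\cong H^0(G,\sh;\K)=0$, which is why \eqref{lll45i} begins with $0$; but for a $\K$-$\mathrm{PD}_n$ pair with $n\geq3$ duality gives $H_2(G;\K)\cong H^{n-2}(G,\sh;\K)$, which need not vanish. Concretely, for the $\K$-$\mathrm{PD}_4$ pair arising from $\Sigma_g\times\Sigma_{h,1}$ (a closed genus-$g$ surface times a genus-$h$ surface with one boundary circle, $g,h\geq1$, field coefficients) one computes $\Ker(\aug_*)\cong\K$ while $H_2(G,\sh;\K)\cong H^2(G;\K)\cong\K^{1+4gh}$, so neither intermediate claim holds there. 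The theorem is still true, but the correct route is to compare the two long exact sequences rather than collapse each to a kernel: after Shapiro, the coefficient sequence for $M$ and the homology sequence of the pair $(G,\sh)$ have the same flanking terms $H_i(\sh;\K)$ and $H_i(G;\K)$ with the same inclusion-induced maps, and the resulting dimension-shifting isomorphism $H_i(G,\sh;\K)\cong H_{i-1}(G;M)$ (this is essentially Bieri--Eckmann's description of relative homology, applied at $i=2$) yields $H_1(G;M)\cong H_2(G,\sh;\K)$ directly, independently of whether $H_2(G;\K)$ vanishes. With that replacement your argument goes through.
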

Furthermore, we define the Gysin maps of Fox pairings as follows. Let $ (G,\sh)$ be a $\K$-$\mathrm{PD}_n$ pair and $ (G',\sh')$ be a $\K$-$\mathrm{PD}_\ell$ pair. Take a group homomorphism $f: (G,\sh) \ra (G',\sh)$. Then, we define {\it a Gysin map} using the composite maps,
\[ f_!: H^{n-1}( G ; H^{n-1}(G; \K[G])) \cong H_1( G,\sh ; H_1(G, \sh ; \K[G]))\xrightarrow{ \ f_* \ } \]
\[ \ \ \ \ \ \ra H_1( G',\sh'; H_1(G',\sh' ; \K[G']))\cong H^{\ell -1}( G' ; H^{\ell -1}(G'; \K[G'])) .\]
Here, the first and last isomorphisms are due to the duality. Similarly, we can define the Gysin map starting from the relative cohomology:
\[ f_!: H^{n-1}( G,\sh ; H^{n-1}(G; \K[G])) \cong H_1( G ; H_1(G, \sh ; \K[G]))\xrightarrow{ \ f_* \ }\]
\begin{equation}\label{333ss} \ \ \ \ \ \ \ra H_1( G' ; H_1(G',\sh' ; \K[G']))\cong H^{\ell -1}( G' ,\sh'; H^{\ell -1}(G'; \K[G'])) .\end{equation}
Concerning the latter Gysin map, as in the functorial discussion in the proof of Theorem \ref{Thm6}, we can show a functorial result for Theorem \ref{Thm6ht}.
\begin{prop}\label{ex313} The Gysin map $f_!$ in \eqref{333ss} is equal to the canonical pushforward,
$$ f_* : H_2( G, \sh ;\K) \lra H_2(G',\sh' ;\K).$$
\end{prop}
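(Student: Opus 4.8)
The plan is to unwind the isomorphism of Theorem~\ref{Thm6ht} into the chain of long exact sequences and Shapiro isomorphisms that produces it (following the proof of Theorem~\ref{Thm6}), and to check that each link of that chain is natural with respect to $f$. Note first that the two ``duality'' isomorphisms at the ends of \eqref{333ss} are precisely the cap products with the fundamental classes $e\in H_n(G,\sh;\K)$ and $e'\in H_\ell(G',\sh';\K)$, which are exactly the first step of the isomorphisms of Theorem~\ref{Thm6ht} for $(G,\sh)$ and $(G',\sh')$; hence they cancel formally, and the proposition reduces to the following. Let $\Phi$ and $\Phi'$ denote the composite isomorphisms $H_1(G;H_1(G,\sh;\K[G]))\cong H_2(G,\sh;\K)$ and $H_1(G';H_1(G',\sh';\K[G']))\cong H_2(G',\sh';\K)$ obtained, in the proof of Theorem~\ref{Thm6ht}, from the long exact sequences (everything beyond the initial cap product). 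Then I must show that the two-fold pushforward $f_*$ appearing in the middle of \eqref{333ss} --- push the inner relative homology $H_1(G,\sh;\K[G])\to H_1(G',\sh';\K[G'])$ forward along the morphism of pairs together with the coefficient map $\K[G]\to\K[G']$, then push the outer homology forward --- corresponds under $\Phi$ and $\Phi'$ to the ordinary pushforward $f_*\colon H_2(G,\sh;\K)\to H_2(G',\sh';\K)$; that is, $\Phi'\circ f_*=f_*\circ\Phi$.

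I would factor $\Phi$ as follows. Write $M:=\Ker(\aug\colon\bigoplus_k\K[G/S_k]\to\K)$. The long exact sequence of the pair $(G,\sh)$ with coefficients $\K[G]$, together with $H_1(G;\K[G])=0$, $H_1(\sh;\K[G])=0$ and $H_0(\sh;\K[G])\cong\bigoplus_k\K[G/S_k]$ (Shapiro), identifies $H_1(G,\sh;\K[G])\cong M$, hence $H_1(G;H_1(G,\sh;\K[G]))\cong H_1(G;M)$. Next, the long exact sequence in $H_*(G;-)$ attached to the coefficient short exact sequence $0\to M\to\bigoplus_k\K[G/S_k]\to\K\to 0$, combined with the Shapiro isomorphisms $H_*(G;\K[G/S_k])\cong H_*(S_k;\K)$ and the vanishing facts used in the proof of Theorem~\ref{Thm6ht}, identifies $H_1(G;M)$ with $H_2(G,\sh;\K)$ (for $n=2$ this passes through the kernel of $\bigoplus_kH_1(S_k;\K)\xrightarrow{\aug_*}H_1(G;\K)$ and the long exact sequence of $(G,\sh)$ with trivial coefficients $\K$). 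The composite of these is $\Phi$, and the same recipe over $(G',\sh')$ gives $\Phi'$.

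Now $f$ induces a morphism of pairs $(G,\sh)\to(G',\sh')$ and a compatible $\K[G]$-linear map $\K[G]\to\K[G']$; since $f$ carries each $S_k$ into some $S'_{k'}$, it also induces maps $\K[G/S_k]\to\K[G'/S'_{k'}]$ and $H_*(S_k;\K)\to H_*(S'_{k'};\K)$ compatible with the augmentations and with the Shapiro isomorphisms (naturality of Shapiro's lemma with respect to the commutative squares $S_k\hookrightarrow G$, $S'_{k'}\hookrightarrow G'$). Consequently $f$ carries the short exact sequence $0\to M\to\bigoplus_k\K[G/S_k]\to\K\to 0$ to its primed analogue, inducing $M\to M'$, and all of the long exact sequences above become commutative ladders under $f$. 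Chasing the resulting commutative squares and composing them yields $\Phi'\circ f_*=f_*\circ\Phi$, which is the assertion; the flanking duality isomorphisms having been cancelled above, this gives $f_!=f_*$ on $H_2(-,-;\K)$.

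The main obstacle I anticipate is bookkeeping rather than anything conceptual: one must pin down precisely that the middle arrow in \eqref{333ss} is the two-fold pushforward with its coefficient change $\K[G]\to\K[G']$, and verify the ``subgroup-into-subgroup'' naturality of the Shapiro isomorphisms, which is the covariant-in-the-group form and is worth spelling out explicitly. A secondary point is that the vanishing inputs to the proof of Theorem~\ref{Thm6ht} (such as $H_1(G;\K[G])=0$, $H_1(\sh;\K[G])=0$, and the relevant connecting maps) must be available for both $G$ and $G'$; this is automatic since $(G,\sh)$ and $(G',\sh')$ are $\K$-Poincar\'{e} duality pairs with nonempty boundary families. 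Once these compatibilities are recorded, every square in sight commutes for formal reasons and the conclusion is immediate.
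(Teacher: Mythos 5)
Your proposal is correct and follows essentially the route the paper intends: the paper offers no written proof beyond the remark that the claim follows from the functoriality of the chain of isomorphisms in the proof of Theorem~\ref{Thm6}, and your argument---cancelling the flanking cap-product isomorphisms against the first step of the Theorem~\ref{Thm6ht} identification, then checking that each long exact sequence and each Shapiro isomorphism in the remaining composite $\Phi$ forms a commutative ladder under $f$---is precisely that functoriality spelled out. The two points you flag (the covariant, subgroup-into-subgroup naturality of Shapiro's lemma, and the identification of the middle arrow of \eqref{333ss} as the double pushforward with coefficient change $\K[G]\to\K[G']$) are indeed the only details needing explicit verification, and both hold as you describe.
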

\begin{exa}\label{ex353} 
If $n=\ell=2$, then the homology $H_2( G, \sh ;\K) $ is isomorphic to $H^0(G;\K) \cong \K $. Therefore, the Gysin map $f_!$ in \eqref{333ss} is always an isomorphism. Namely, for any fundamental Fox pairing $\eta$ of $(G,\sh)$, $f_!(\eta)$ is also a fundamental Fox pairing of $(G',\sh')$.
\end{exa}

\begin{exa}\label{ex339}
As in Section \ref{kkkk}, let us consider a link $L: \sqcup_{k=1}^q S^1 \ra \Sigma$ with $q$ components, where $\Sigma$ is an integral homology 3-sphere and $q \in \N$. In many cases (e.g., when the complement $\Sigma \setminus L$ is irreducible), the complement $\Sigma \setminus L$ is aspherical, and the inclusion $\iota: \partial ( \Sigma \setminus L) \hookrightarrow \Sigma \setminus L$ induces an injection $\iota_*: \pi_1( \partial ( \Sigma \setminus L)) \ra \pi_1( \Sigma \setminus L)$. If so, $ \Sigma \setminus L$ is aspherical; hence, the pair $(G,\sh)=(\pi_1( \Sigma \setminus L), \pi_1( \partial ( \Sigma \setminus L)) )$ is a $\K$-$\mathrm{PD}_3$ pair for any ring $\K$. By Alexander duality, $ H_2(G,\sh;\K) \cong \K^{q}$. Hence, Theorem \ref{Thm6ht} implies that there are $q$ non-trivial higher Fox pairings from $(G,\sh) $. An open problem is to find a way to explicitly express Fox pairings as representatives of the homology classes of $ H_2(G,\sh;\K)\cong \K^q$.

Let $F \subset \Sigma \setminus L$ be a Seifert surface whose boundary is the $k$-th component of $L$. Consider the inclusion $\iota: (\pi_1(F) , \pi_1(\partial F)) \ra (G,\sh)$, and let $ \eta_F$ be the fundamental Fox pairing of $\pi_1(F)$. Then, the fundamental Fox pairing of $(G,\sh)$ as the $k$-th basis of $ H_2(G,\sh;\K) \cong \K^{q}$ is equal to $\iota_!(\eta_F)$ by definitions.
\end{exa}

Finally, let us briefly discuss the case $\sh = \emptyset$ and higher Fox pairings for aspherical {\it closed} $n$-manifolds. Here, we will prove an easy proposition:
\begin{prop}\label{ex77733}
Let the pair $(G,\sh)$ be a $\K$-$\mathrm{PD}_n$ pair with $\sh = \emptyset$.

Then, $H^s(G;\K[G])$ is zero if $s \neq n$, and $\K$ otherwise. Furthermore, $ H^t( G; H^n(G;\K[G]))$ is isomorphic to the ordinary cohomology $H^t (G;\K)$. In particular, if $t=n$, then the $n$-th cohomology $ H^n( G; H^n(G;\K[G]))$ is isomorphic to $\K$.
\end{prop}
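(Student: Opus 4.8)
The plan is to deduce everything from the two dualities available for a $\K$-$\mathrm{PD}_n$ pair with $\sh = \emptyset$, namely $H^i(G;M) \cong H_{n-i}(G;M)$ for every $\K[G]$-module $M$, combined with the Shapiro-type computation $H_*(G;\K[G]) \cong H_*(\mathrm{pt};\K)$. First I would establish the statement about $H^s(G;\K[G])$: applying duality with $M = \K[G]$ gives $H^s(G;\K[G]) \cong H_{n-s}(G;\K[G])$, and since $G$ (being a $\mathrm{PD}_n$ group, hence a duality group of dimension $n$) satisfies $H_*(G;\K[G]) \cong H_*(\mathrm{pt};\K)$, this is $\K$ when $n-s = 0$ and zero otherwise. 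This is exactly the same Shapiro/duality argument already used in the proof of Proposition~\ref{Keypro699}, so it is routine.

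Next I would identify the coefficient module $H^n(G;\K[G])$. By the first step this is isomorphic to $\K$ as a $\K$-module, but the point is that it carries a (possibly nontrivial) left $\K[G]$-action coming from the $\K[G]$-bimodule structure of $\K[G]$; this action is the \emph{orientation character} of the $\mathrm{PD}_n$ pair. Since the pair is orientable over $\K$ in the sense that the duality class $e \in H_n(G,\sh;\K)$ exists with \emph{trivial} coefficients, this orientation module is the trivial module $\K$. (Concretely: the duality isomorphism $e \cap \bullet : H^0(G;\K[G]) \to H_n(G;\K[G])$ is a $\K[G]$-module map, the right-hand side is $\K$ with trivial action by the Shapiro computation, so $H^n(G;\K[G]) \cong \K$ as left $\K[G]$-modules.) Once this is in place, I would substitute $M = H^n(G;\K[G]) \cong \K$ into the remaining cohomology $H^t(G; H^n(G;\K[G])) \cong H^t(G;\K)$, which is the ordinary group cohomology of $G$ with trivial coefficients. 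Finally, specializing $t = n$ and invoking the duality once more, $H^n(G;\K) \cong H_0(G,\sh;\K) = H_0(G;\K) \cong \K$ (using $\sh = \emptyset$), giving the last assertion.

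The only genuine subtlety — the step I expect to require the most care — is the identification of the $\K[G]$-module structure on $H^n(G;\K[G])$ as the trivial module, i.e. checking that the coefficient system is untwisted. This is where the hypothesis that the fundamental class $e$ lives in $H_n(G,\sh;\K)$ with trivial coefficients is doing real work, and I would spell out that the cap product with $e$ is $\K[G]$-equivariant so that triviality of the target forces triviality of the source. Everything else is a formal chain of duality isomorphisms and Shapiro's lemma, applied exactly as in the proofs of Propositions~\ref{Keypro6} and~\ref{Keypro699}.
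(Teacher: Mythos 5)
Your argument is correct and follows essentially the same route as the paper: duality plus the Shapiro lemma give $H^s(G;\K[G]) \cong H_{n-s}(\mathrm{pt};\K)$, the triviality of the residual $G$-action on $H^n(G;\K[G]) \cong \K$ reduces the outer cohomology to ordinary cohomology with trivial coefficients, and one more application of duality handles $t=n$. Your explicit justification of the untwisted module structure via equivariance of $e \cap \bullet$ is a welcome elaboration of the paper's one-line remark that $G$ acts trivially on $H_{n-k}(\mathrm{pt};\K)$, but it is not a different method.
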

\begin{proof}
By duality and the Shapiro lemma, $H^s(G;\K[G]) \cong H_{n-s} (G;\K[G]) \cong H_{n-s}(\mathrm{pt};\K)$, which proves the former claim. Since $G$ acts trivially on $H_{n-k}(\mathrm{pt};\K)$, we have $ H^t( G; H^n(G;\K[G]) \cong H^t( G; \K )$, as claimed.
\end{proof}

As an example, suppose that $X$ is an aspherical closed $n$-manifold with orientation and $n\geq 2$, and $G=\pi_1(X)$; then, $ H^{n }( G; H^{n } (G;\K[G]) ) \cong \K$. This means that such a manifold admits no original Fox pairing, but does admit a non-trivial higher Fox pairing. An open problem is to find a way to concretely express higher Fox pairings as a representative $n$-cocycle. We conclude this paper by giving a higher Fox pairing of $\Z^n$. 
\begin{exa}\label{ex7733}
Let $G=\Z^n$ and $\K$ be a PID. Then, $G$ is a $\K$-$\mathrm{PD}_n$ pair, since $K(G,1)$ is a torus $(S^1)^n$. We fix a basis $t_1, \dots, t_n \in G$, % and denote $1+ t_i + \cdots +t_i^{k-1}$ by $(1-t_i^k)/(1-t)$. We 
and regard $\K[G]$ as the Laurent polynomial ring $\K[t_1^{\pm 1}, \dots, t_n^{\pm 1}]$. Notice that, if $n=1$, then a basis of $H^1(\Z;\K[\Z]) $ is represented by the derivation $D_i$, which takes $t_j^\ell$ to $\delta_{ i j}\sum_{k = 0}^{\ell -1 } t_i^{k} $ if $\ell \geq 0$, and to $\delta_{ i j}\sum_{k = 0}^{- \ell-1 } t_i^{-k} $ if $\ell < 0$. By the K\"{u}nneth theorem, $H^n(G;\K[G]) \cong \K $ is represented by the cross product $D_1 \times D_2 \times \cdots \times D_n$. In particular, the higher Fox pairing $\eta$ of a basis of $H^{n }( G; H^{n } (G;\K[G]) ) \cong \K$ is described as $ (D_1 \times \cdots \times D_n) \otimes (D_1 \times \cdots \times D_n )$. More precisely, by the definition of the cross product, the map $\eta:\K[G]^n \times \K[G]^n \ra \K[G]$ is given by 
\[ \eta\bigl( (t_1^{e_{1,1}} \cdots t_n^{e_{1,n}}, t_1^{e_{2,1}} \cdots t_n^{e_{2,n}},\dots, t_1^{e_{n,1}} \cdots t_n^{e_{n,n}}), (t_1^{f_{1,1}} \cdots t_n^{f_{1,n}}, \dots, t_1^{f_{n,1}} \cdots t_n^{f_{n,n}})\bigr) \] 
\[= \prod_{k=1}^n \bigl( D_k( t_k^{e_{k,k}}) t_k^{-\sum_{\ell=k +1}^n e_{\ell,\ell } } \bigr) \prod_{k=1}^n \bigl( D_k( t_k^{f_{k,k}}) t_k^{-\sum_{\ell=k +1}^n f_{\ell,\ell } } \bigr) .\]

\end{exa}

\subsection{Acknowledgments}
The work was partially supported by JSPS KAKENHI, Grant Number 00646903.

\appendix

\section{Fox pairings from seven-term exact sequences}
\label{SS2}
In this appendix, we observe Fox pairings from seven-term exact sequences and give the proof of Theorem \ref{Thm688}. 

Let us assume $G$ to be a group and $M$ to be a $\K[G]$-bimodule. By Proposition \ref{Keypro}, it is reasonable to discuss the cohomology $H^1(G_1 ; H^1(G_2 ; M)) $. For this, let us consider the Lyndon-Hochschild-Serre spectral sequence associated with the extension,
\begin{equation}\label{iiiij} 0 \lra G_1 \stackrel{i}{\lra} G_2 \times G_1 \lra G_2 \lra 0,\end{equation}
where $i(g) =(1,g)$ for any $g \in G_1$. Then, as is known \cite{DHW}, the tail of the spectral sequence reduces to an exact sequence of seven terms:
\begin{multline}\label{23456}
0 \ra H^1(G_2 ;M^{G_1}) \lra H^1(G_2 \times G_1 ;M) \lra H^1(G_1 ;M)^{G_2} \lra H^2(G_2 ;M^{G_1} ) \ra \\
\ra H^2(G_2 \times G_1 ;M )_1 \stackrel{\rho }{\lra}
H^1(G_2; H^1(G_1;M)) \lra H^3(G_2;M^{G_1}) .
\end{multline}
Here, $ M^{G_1}$ is the invariant part $\{ m \in M | g \cdot m =m \ \mathrm{for \ any \ } g \in G_1\} $, and the fifth term $H^2(G_2 \times G_1 ;A )_1 $ is the kernel of the restriction of $i^*$, i.e.,
$$ H^2(G_2 \times G_1 ;M )_1= \Ker(i^*: H^2(G_2 \times G_1;M)\ra H^2(G_1; M)) .$$
In particular, if $M^{G_1}=0 $ or $ H^*(G_2;M^{G_1}) =0$, then $\rho$ is an isomorphism. For example, we can easily verify that if $G$ is of infinite order and $M=\Z[G]$, then $M^{G_1}=0 $. Therefore, it is sensible to focus on the map $\rho$ and the second cohomology $H^2(G_2 \times G_1 ;M ) $.

Let us review the construction of the map $\rho $ from \cite{DHW}. Take the maps
$$f: (G_2 \times G_1) \times (G_2 \times G_1) \lra M , \ \ \ \ k: G_2 \lra M, $$
such that $f$ lies in $Z^2(G_2 \times G_1;M)$ and $\delta^1(k)(a,b)= f((1,a),(1,b))$ for any $a,b \in G_2$. Now, let us define $\rho_f : G_2 \times G_1 \ra M$ by
\begin{equation}\label{iiii} \rho_f(a,b)= f ((a,1),(1,b))- f((1,b),(a,1)) +q(a) (1-b^{-1})+(1-a) q(b)\end{equation}
for $(a,b )\in G_2 \times G_1.$ Then, we can easily verify that $ \rho_f$ lies in $ Z^1_l(G_2; Z^1_r(G_1;M)) $. According to Sections 6 and 10.3 in \cite{DHW}, the correspondence $f \mapsto \rho_f$ is equal to the map $\rho$.

Now we will give an example and make a comparison of the cross product and Example \ref{deriex}.

\begin{exa}\label{kl1} 
Suppose that $\K$ is a PID, and $M$ is $ \K [G] \otimes_{\K} \K[G]$ and that $G$ is of infinite order. Here, $M$ is acted on by
$$ g( a \otimes b)= ga \otimes b, \ ( a \otimes b) \cdot h = a \otimes bh^{-1} \ \ \ \mathrm{for} \ \ \ a,b \in G, g\in G_2, h\in G_1. $$
Then, $H^0( G_i ;\K[G]) = \K[G]^{G_i}=0$. Thus, the K\"{u}nneth theorem in local coefficients implies that the cross product
$$ \times : H^1(G_2 ;\K[G]) \otimes H^1( G_1 ;\K[G]) \lra H^2(G_2 \times G_1 ;\K [G] \otimes_{\K} \K[G] ), $$
is an isomorphism. Furthermore, since we can easily verify that $ M^{G_1}=0$, it follows from \eqref{23456} that the composite 
\begin{equation}\label{iii}\rho \circ \times : H^1(G_2 ;\K[G]) \otimes H^1( G_1 ;\K[G]) \lra H^1(G_2 ; H^1(G_1 ; M)) \end{equation}
is an isomorphism. For two derivations $D_l \in Z^1_l(G_2 ;\K[G]) ,D_r \in Z^1_r(G_1 ;\K[G]) $, we can verify by construction that $ \rho \circ (D_l \times D_r)$ is equal to $ \rho_{D_l\otimes D_r}$ in Example \ref{deriex}. To summarize, in the above situation with $M= \K [G] \otimes_{\K} \K[G]$, every Fox pairing is a sum of some Fox pairings in Example \ref{deriex} up to coboundary.
\end{exa}
Next, for practice, let us examine the case of $M= \K[G]$. Let $\mu: \K[G] \otimes_{\K} \K[G] \ra \K[G]$ be the $\K$-bilinear map defined by $g\otimes h \mapsto gh^{-1}$. Then, we have an exact sequence of $\K[G_1 \times G_2]$-modules,
$$ 0\lra \Ker(\mu ) \lra \K[G] \otimes_{\K} \K[G] \lra \K[G] \lra 0 \ \ \ \ \mathrm{(exact)}. $$
Moreover, we have the exact sequence, 
\begin{equation}\label{ijjii} H^2( G_2 \times G_1; \K[G] \otimes_{\K} \K[G]) \stackrel{\mu_*}{\lra} H^2( G_2 \times G_1; \K[G] ) \stackrel{\delta}{\lra} H^{3}( G_2 \times G_1 ;\Ker(\mu)).\end{equation}
If $ M^{G_1}= \K[G]^{G_1}=0$, the map $\rho$ in \eqref{23456} is an isomorphism. Notice that, from \eqref{iii}, any Fox pairing from the image $\mathrm{Im}(\mu_*) $ is the Fox pairing $\eta_{D_{l}\otimes D_r}$ in Example \ref{deriex}. Thus, if the connecting homomorphism $ \delta $ is not zero, there are other examples of Fox pairings, which do not arise from Example \ref{deriex}.
\begin{proof}[Proof of Theorem \ref{Thm688}]
Let us prove the first claim. Let $(G,\sh)$ be a $\mathrm{PD}_2$-pair by assumption. By the injection in \eqref{lll5454}, we may consider only cohomology classes of the non-relative $H^1( G_2; H^1(G_1;\K[G])) $. Consider the commutative diagrams,
$${\normalsize
\xymatrix{ 0 \ar[r]&B^1( G ;Z^1(G;\K[G])) \ar[d]\ar[r] &Z^1( G ;Z^1(G;\K[G])) \ar[d] \ar[r]\ar[d] & H^1( G;Z^1(G;\K[G])) \ar[r]\ar[d]& 0\\
0 \ar[r]&B^1( G ;H^1(G;\K[G])) \ar[r] &Z^1( G ;H^1(G;\K[G])) \ar[r] & H^1( G ;H^1(G;\K[G])) \ar[r]\ar[d]^{\delta}& 0\\
& & & H^2( G ;B^1(G;\K[G])). & 
}}
$$
Here, the horizontal arrows are canonical exact sequences, and the right vertical map is derived from the $\delta$-functor associated with
$$ 0 \ra B^1(G;\K[G]) \ra Z^1(G;\K[G])\ra H^1(G;\K[G]) \ra 0.$$
The bottom-right term $H^2( G ;B^1(G;\K[G])) \cong H_0(G,\sh ;B^1(G;\K[G]))$ is zero. Therefore, any element of $ H^1( G ;H^1( G;\K [G])) $ is represented by an element of $ Z^1( G ;Z^1( G;\K [G])) $, i.e., a Fox pairing, as required.

Next, to prove the second claim, we assume that $\K$ is a PID. By duality, $ H^m( G;\Ker(\mu ))$ is zero if $m\geq 2$. Therefore, the $E_2$-term $H^p(G_2;H^q(G_1 ; \Ker(\mu ) ))$ with $p>1, q>1$ of the LHS-spectral sequence from \eqref{iiiij} is zero. In particular, $E^3_{\infty}= H^3(G_2 \times G_1 ; \Ker(\mu ))$ vanishes; the map $\mu_*$ in \eqref{ijjii} is surjective.

Since any $\mathrm{PD}_2$-pair $(G, \sh)$ is of infinite order, $\K[G]^{G_1}=0 $. By assumption, the relative cohomology $H^2(G_2 \times G_1,G_1;\K[G]) $ is isomorphic to $H^2(G_2 \times G_1;\K[G])$ because of $H^2(G;\K[G])=0$. Thus, every 2-cocycle $f$ in $Z^2(G_2 \times G_1;M) $ may satisfy that the restriction of $f $ on $\{1\} \times G_1$ is zero. Thus, it follows from \eqref{23456} that every Fox pairing $[\eta] $ in the cohomology $ H^1( G_2;H^1(G_1;\K[G]))$ is represented by a 2-cocycle in $ H^2( G_2 \times G_1; \K[G] )$, or in $H^2( G_2 \times G_1; \K[G] \otimes_{\K} \K[G]) $ by the surjectivity of $\mu_*$. Since any such 2-cocycle is a sum of cross products as shown in Example \ref{kl1}, $\eta $ is represented by a sum of Fox pairings in Example \ref{deriex}, as required.
%Finally, we will prove the last claim. By the former claim, $\rho_f$ is a sum of cross products; thus, by the anti-symmetry of the cross product, $\rho_f + \rho_f^t$ is null-cohomologous, as required. 
\end{proof}

\section{On quasi-derivations and cobrackets}
\label{fff}
Here, we discuss quasi-derivations, as presented in Section 2.3 of \cite{Mas}, and prove some lemmas. A {\it quasi-derivation ruled by} a Fox pairing $\eta : \K[G] \times \K[G] \ra M$ is a $\K$-linear map $q: \K[G] \ra M$ satisfying
\begin{equation}\label{t166}
q (ab)= q(a) b +aq (b) + \eta (a,b), \ \ \ \ \mathrm{for \ any \ }a,b \in \K[G].\end{equation}
\begin{lem}\label{kl2} 
Let $\eta$ be a Fox pairing, and $A= \K[G]$. Furthermore, let $M_{\rm conj}$ be $M$ with the action of $G$ defined by setting $ a \mapsto g ag^{-1}$, where $a \in M, g\in G$.

The map $\kappa_{\eta}: G^2 \ra M $ that $(g,h)$ sends to $\eta (g,h) h^{-1} g^{-1}$ is a 2-cocycle of $G$ in the coefficients $M$. Furthermore, there is a quasi-derivation ruled by $\rho$ if and only if the cohomology class of $ \kappa_{\eta}$ vanishes in $ H^2( G ;M_{\rm conj})$.

\end{lem}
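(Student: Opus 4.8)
\textbf{Proof proposal for Lemma \ref{kl2}.}

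The plan is to verify the cocycle condition for $\kappa_\eta$ by a direct computation, and then to set up the correspondence between quasi-derivations and trivializations of $[\kappa_\eta]$ explicitly. First I would write out what $\delta^2(\kappa_\eta)$ is for the conjugation action: for $g,h,k\in G$,
\[ (\delta^2 \kappa_\eta)(g,h,k)= g\cdot\kappa_\eta(h,k) - \kappa_\eta(gh,k) + \kappa_\eta(g,hk) - \kappa_\eta(g,h), \]
where $g\cdot(-)$ means conjugation by $g$. Substituting $\kappa_\eta(a,b)=\eta(a,b)b^{-1}a^{-1}$ and expanding $\eta(gh,k)$ via \eqref{t1} and $\eta(g,hk)$ via \eqref{t2}, I expect all four terms to combine so that everything cancels; the key algebraic fact that makes this work is precisely the pair of Leibniz rules \eqref{t1}, \eqref{t2} together with $\aug(g)=1$ for group elements. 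This is a routine but slightly delicate bookkeeping exercise in keeping track of the left/right placements of the group-element factors; I would carry it out term by term and verify the cancellation.

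For the second assertion, suppose first that a quasi-derivation $q\colon\K[G]\to M$ ruled by $\eta$ exists. Restricting \eqref{t166} to group elements $g,h$ and post-multiplying by $h^{-1}g^{-1}$, I would read off
\[ q(gh)\,h^{-1}g^{-1} = q(g)\,g^{-1} + g\big(q(h)\,h^{-1}\big)g^{-1} + \kappa_\eta(g,h), \]
so that the map $c\colon G\to M_{\rm conj}$ defined by $c(g):=q(g)g^{-1}$ satisfies $\kappa_\eta(g,h) = c(gh)-c(g)-g\cdot c(h) = -(\delta^1 c)(g,h)$; hence $\kappa_\eta$ is a coboundary and $[\kappa_\eta]=0$ in $H^2(G;M_{\rm conj})$. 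Conversely, given $c\colon G\to M$ with $\delta^1 c = -\kappa_\eta$, I would define $q$ on group elements by $q(g):=c(g)g$ and extend $\K$-linearly to $\K[G]$, and then check directly that \eqref{t166} holds on group elements (which, by bilinearity of all three operations and of $\eta$, suffices for all of $\K[G]$): this is exactly the previous computation run backwards. One should also check at the base point that $q(1)=0$ is forced/consistent, which follows from evaluating the normalized cocycle identity at $g=h=1$.

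The main obstacle is the sign-and-placement bookkeeping in the first step: because $M$ is only a bimodule and the conjugation action intertwines the two sides, one must be careful that ``$g\cdot\kappa_\eta(h,k)$'' really is $g\eta(h,k)k^{-1}h^{-1}g^{-1}$ and then see how the $g$ on the far right meets the $g$ produced by expanding $\eta(gh,k)=\eta(g,k)+g\eta(h,k)$ — wait, note $\eta(gh,k)=\eta(g,k)\aug(h)+g\eta(h,k)=\eta(g,k)+g\eta(h,k)$, so $\kappa_\eta(gh,k)=\eta(gh,k)k^{-1}h^{-1}g^{-1}=\eta(g,k)k^{-1}h^{-1}g^{-1}+g\eta(h,k)k^{-1}h^{-1}g^{-1}$, and the second summand is exactly $g\cdot\kappa_\eta(h,k)$, so those cancel immediately. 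The remaining terms $-\eta(g,k)k^{-1}h^{-1}g^{-1}+\kappa_\eta(g,hk)-\kappa_\eta(g,h)$ should then collapse using \eqref{t2} in the form $\eta(g,hk)=\eta(g,h)k+\eta(g,k)$, giving $\kappa_\eta(g,hk)=\eta(g,h)k\cdot k^{-1}h^{-1}g^{-1}+\eta(g,k)k^{-1}h^{-1}g^{-1}=\eta(g,h)h^{-1}g^{-1}+\eta(g,k)k^{-1}h^{-1}g^{-1}=\kappa_\eta(g,h)+\eta(g,k)k^{-1}h^{-1}g^{-1}$, and now all remaining terms cancel. So in fact the computation is short once organized this way; the only real care needed is consistently distinguishing the bimodule action from the conjugation action throughout.
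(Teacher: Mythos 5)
Your proposal is correct and follows essentially the same route as the paper: a direct check that $\delta^2(\kappa_\eta)=0$ using \eqref{t1} and \eqref{t2}, and the substitution $q\mapsto q(g)g^{-1}$ identifying the quasi-derivation equation \eqref{t166} with the statement that $\kappa_\eta$ is a coboundary in $H^2(G;M_{\rm conj})$ (your sign on $\delta^1 c$ is a harmless convention difference). The only difference is that you carry out the cancellation explicitly where the paper says ``we can easily check,'' and your computation is accurate.
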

\begin{proof} 
From \eqref{t1} and \eqref{t2}, we can easily check that $ \delta^2( \kappa_{\eta})=0.$ For a map $q : A \ra M$, we define $q': A \ra M$ by $q'(a):= q(a)a^{-1}$. Since equation \eqref{t166} is equivalent to $ \delta^1(q')(a,b)= \eta (a,b)b^{-1}a^{-1} $ for any $a,b \in G $, the existence of $ q$ is equivalent to $ [\kappa_{\eta}]=0 \in H^2( G ;M_{\rm conj})$.
\end{proof}
Since $H^2( G; \K [G]_{\rm conj}) $ is isomorphic to the Hochschild cohomology $ HH^2(\K[G])$ (see, e.g., \cite[Corollary 9.7.5]{Wei}), sometimes it can be computed by using the techniques of Hochschild cohomology. Concerning Poincar\'{e} duality groups, we can immediately show the following.
\begin{lem}\label{lemkl2}
If $( G, \sh)$ is a $\mathrm{PD}_n$-pair with $n \geq 2$ and $\sh \neq \emptyset$, then $H^2( G; \K [G]_{\rm conj})$ vanishes. In particular, any Fox pairing $\eta$ of $G$ admits a quasi-derivation ruled by $\eta$.
\end{lem}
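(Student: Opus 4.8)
\textbf{Proof plan for Lemma \ref{lemkl2}.}
The plan is to reduce the vanishing of $H^2(G;\K[G]_{\rm conj})$ to a computation that is already available in the excerpt, via the Poincar\'{e}--Lefschetz duality of the pair $(G,\sh)$ and the Shapiro-type identifications used repeatedly in Sections \ref{OOO} and \ref{SS2346}. First I would observe that the module $\K[G]_{\rm conj}$ is $\K[G]$ equipped with the conjugation action, and that this is precisely $\K[G]\otimes_{\K[G\times G]}\K[G]$-type data; equivalently, writing $G_2\times G_1$ acting on $\K[G]$ by $g\cdot a = gag^{-1}$ restricted along the diagonal, one sees $H^2(G;\K[G]_{\rm conj})$ sits inside the two-variable picture of Appendix \ref{SS2}. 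The cleanest route, though, is to apply duality directly: since $(G,\sh)$ is a $\K$-$\mathrm{PD}_n$ pair, $H^2(G;\K[G]_{\rm conj})\cong H_{n-2}(G,\sh;\K[G]_{\rm conj})$.

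Next I would compute $H_*(G,\sh;\K[G]_{\rm conj})$. For $n\ge 3$ this is immediate: $H_{n-2}(G,\sh;\K[G]_{\rm conj})$ fits into the relative homology long exact sequence with $H_*(G;\K[G]_{\rm conj})$ and $H_*(\sh;\K[G]_{\rm conj})$, and one argues that the absolute groups $H_*(G;\K[G]_{\rm conj})$ vanish in the relevant positive degrees just as in Proposition \ref{Keypro699} (using that $\K[G]_{\rm conj}$, as a $\K[G]$-module under the conjugation action, is still induced up from the trivial module in the appropriate sense after passing through Shapiro's lemma), while $H_*(\sh;\K[G]_{\rm conj})$ is controlled by the one-dimensional pieces $S_k$. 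The borderline case is $n=2$, where one needs $H_0(G,\sh;\K[G]_{\rm conj})=0$; here the long exact sequence gives $H_0(G,\sh;\K[G]_{\rm conj})$ as the cokernel of $\bigoplus_k H_0(S_k;\K[G]_{\rm conj})\to H_0(G;\K[G]_{\rm conj})$, and since $\sh\neq\emptyset$ this map is surjective (the inclusion-induced map on $H_0$ is onto because $\K[G]_{\rm conj}$ is generated over $G$ by the class of $1$, which already lies in the image of any $S_k$-coinvariants), so the cokernel vanishes. This is exactly parallel to the $\sh\neq\emptyset$ argument in the proof of Proposition \ref{Keypro699}(3).

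Having obtained $H^2(G;\K[G]_{\rm conj})=0$, the second sentence of the lemma is a formal consequence of Lemma \ref{kl2}: that lemma identifies the obstruction to the existence of a quasi-derivation ruled by $\eta$ with the class $[\kappa_\eta]\in H^2(G;\K[G]_{\rm conj})$, and since the target group is zero the obstruction automatically vanishes, so a quasi-derivation ruled by $\eta$ exists for every Fox pairing $\eta$ of $G$.

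\textbf{Main obstacle.} The step I expect to require the most care is pinning down the homology of $\K[G]_{\rm conj}$ as a $G$- (or $G\times G$-) module precisely enough to invoke Shapiro's lemma: unlike $\K[G]$ with the left-multiplication action, the conjugation action is not literally induced from the trivial subgroup, so one must either realize $\K[G]_{\rm conj}$ via the diagonal $G\hookrightarrow G\times G$ acting on $\K[G]$ with the two-sided action (where $\K[G]$ \emph{is} induced, being $\K[G\times G]\otimes_{\K[\Delta G]}\K$ up to the standard identification) and then use the Eckmann--Shapiro isomorphism, or else argue directly that $H_*(G;\K[G]_{\rm conj})$ concentrates in degree $0$ using that $G$ acts on itself by conjugation with the identity as a fixed point. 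I would take the former, more structural route, since it also clarifies the $n=2$ surjectivity claim on $H_0$; once that identification is in place the rest is the same long-exact-sequence bookkeeping already performed for Proposition \ref{Keypro699}.
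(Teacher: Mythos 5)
Your reduction via duality, $H^2(G;\K[G]_{\rm conj})\cong H_{n-2}(G,\sh;\K[G]_{\rm conj})$, is the right first step, and your $n=2$ case is correct, indeed for a simpler reason than you give: $H_0(G,\sh;M)$ is the cokernel of $\bigoplus_k M_{S_k}\to M_G$, and this map is surjective for \emph{every} coefficient module $M$, since the $G$-coinvariants are a further quotient of the $S_k$-coinvariants. No special property of $\K[G]_{\rm conj}$ is needed; in particular your parenthetical claim that $\K[G]_{\rm conj}$ is ``generated over $G$ by the class of $1$'' is false (the conjugation orbit of $1$ is just $\{1\}$), but also unnecessary.

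The genuine gap is in the $n\geq 3$ case, precisely at the point you flag as the ``main obstacle''. The module $\K[G]_{\rm conj}$ is \emph{not} induced from the trivial subgroup in any sense usable by Shapiro's lemma: as a $G$-module it decomposes as $\bigoplus_{[c]}\K[G/Z_G(c)]$ over conjugacy classes, induced from the \emph{centralizers}, and the class of the identity contributes the trivial module $\K$ as a direct summand. (Your ``structural route'' does exhibit $\K[G]$ as $\mathrm{Ind}_{\Delta G}^{G\times G}\K$, but computing $H_*(G;\K[G]_{\rm conj})$ requires restricting this back to the diagonal, and the double-coset formula returns you to exactly the centralizer decomposition.) Consequently $H_*(G;\K)$ is a direct summand of $H_*(G;\K[G]_{\rm conj})$, and these groups do not concentrate in degree $0$. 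In fact the vanishing statement itself fails for $n\geq 3$: for a $\K$-$\mathrm{PD}_3$ pair with $m\geq 2$ boundary subgroups (e.g.\ the exterior of a hyperbolic two-component link), the tail of the long exact sequence $H_1(G,\sh;\K)\to H_0(\sh;\K)\cong\K^m\to H_0(G;\K)\cong\K\to 0$ forces $H^2(G;\K)\cong H_1(G,\sh;\K)$ to surject onto a copy of $\K^{m-1}\neq 0$, and $H^2(G;\K)$ is a direct summand of $H^2(G;\K[G]_{\rm conj})$. So you should not try to repair the $n\geq 3$ reduction; the reliable content of the lemma is the $n=2$ case (the one actually used for Fox pairings), where the cokernel argument above already gives the vanishing for arbitrary coefficients.
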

According to \cite[Lemma 2.6]{Mas}, if a Fox pairing $\eta$ satisfies $(\eta +\eta^t)(g,h)=k(1-g)(1-h )$ for some $k \in \K$ and there is a quasi-derivation ruled by $\eta$, we can define a cobracket $\K[G/\mathrm{conj}] \ra \K[G/\mathrm{conj}]\otimes \K[G/\mathrm{conj}] $. It would be interesting to construct such cobrackets starting from a Poincar\'{e} duality pair $(G,\sh)$, e.g., when $G$ arises from an orbifold as in Section \ref{SS234446}.

\normalsize

\noindent
Department of Mathematics, Tokyo Institute of Technology
2-12-1 Ookayama, Meguro-ku, Tokyo 152-8551 Japan


\begin{thebibliography}{99}

\small
\ifx\undefined\bysame
\newcommand{\bysame}{\leavevmode\hbox to3em{\hrulefill}\,}
\fi

\bibitem[Bro]{Bro} Brown, Kenneth S. {\it Cohomology of groups}. Graduate Texts in Mathematics, 87. Springer-Verlag, New York, 1994. MR1324339

\bibitem[BE]{BE}Bieri, Robert; Eckmann, Beno. {\it Relative homology and Poincar\'{e} duality for group pairs}. J. Pure Appl. Algebra 13 (1978), no. 3, 277--319. MR0509165

\bibitem[DHW]{DHW}
Dekimpe, Karel; Hartl, Manfred; Wauters, Sarah. {\it A seven-term exact sequence for the cohomology of a group extension}. J. Algebra 369 (2012), 70--95. MR2959787

\bibitem[Fow]{Fow}
Fowler, Jim. {\it Finiteness properties for some rational Poincar\'{e} duality groups}. Illinois J. Math. 56 (2012), no. 2, 281--299.

\bibitem[Mil]{Mil}Milnor, John W. {\it Infinite cyclic coverings}. 1968 Conference on the Topology of Manifolds (Michigan State Univ., E. Lansing, Mich., 1967) pp. 115--133 Prindle, Weber \& Schmidt, Boston, Mass. MR0242163

\bibitem[Mas]{Mas} Massuyeau, Gw\'{e}na\"{e}l; {\it Formal descriptions of Turaev's loop operations}. Quantum Topol. 9 (2018), no. 1, 39--117.

\bibitem[MT1]{MT} Massuyeau, Gw\'{e}na\"{e}l; Turaev, Vladimir. {\it Fox pairings and generalized Dehn twists}. Ann. Inst. Fourier (Grenoble) 63 (2013), no. 6, 2403--2456. MR3237452


\bibitem[MT2]{MT2} Massuyeau, Gw\'{e}na\"{e}l; Turaev, Vladimir. {\it Quasi-Poisson structures on representation spaces of surfaces}. Int. Math. Res. Not. IMRN 2014, no. 1, 1--64. MR3158528

\bibitem[McC]{Mcc}
McCleary, John. {\it A user's guide to spectral sequences}. Second edition. Cambridge Studies in Advanced Mathematics, 58. MR1793722

\bibitem[Qui]{Qui} Quillen, Daniel. {\it Rational homotopy theory}. Ann. of Math. (2) 90 (1969), 205--295. MR0258031

\bibitem[Tur1]{Tur} Turaev, Vladimir. {\it Intersections of loops in two-dimensional manifolds}. (Russian) Mat. Sb. 106(148) (1978), no. 4, 566--588. MR0507817

\bibitem[Tur2]{Tur2} Turaev, Vladimir. {\it Multiplace generalizations of the Seifert form of a classical knot}. (Russian) Mat. Sb. (N.S.) 116(158) (1981), no. 3, 370--397, 463--464. MR0665689

\bibitem[Wei]{Wei} Weibel, Charles A. {\it An introduction to homological algebra}. Cambridge Studies in Advanced Mathematics, 38. Cambridge University Press, Cambridge, 1994. MR1269324

\end{thebibliography}
\end{document}